\long\def\comment#1\endcomment{}
\title{Actions of acylindrically hyperbolic groups on $\ell^1$}
\author{Cornelia Dru\c{t}u}
\email{drutu@maths.ox.ac.uk}
\address{Mathematical Institute \\
 University of Oxford \\ Oxford, UK.}
\author{John M. Mackay}
\email{john.mackay@bristol.ac.uk}
\address{School of Mathematics \\ University of Bristol \\ Bristol, UK}
\thanks{The research of both authors was supported in part by MPIM Bonn's funding of a joint research visit in the summer of 2019.  
The research of the first author was supported by MPIM Bonn's funding of further visits in the summers of 2021, 2022 and 2023.  
}
\date{\today}
\definecolor{darkgreen}{cmyk}{1,0,1,.2}
\long\def\comment#1\endcomment{}
\long\def\green#1{\textcolor{darkgreen}{#1}}
\numberwithin{equation}{section}
\newtheorem{theorem}[equation]{Theorem}
\newtheorem{proposition}[equation]{Proposition}
\newtheorem{corollary}[equation]{Corollary}
\newtheorem{lemma}[equation]{Lemma}
\newtheorem{notation}[equation]{Notation}
\newtheorem{example}[equation]{Example}
\newtheorem{question}[equation]{Question}
\newtheorem{conjecture}[equation]{Conjecture}
\newtheorem{definition}[equation]{Definition}
\newtheorem{remark}[equation]{Remark}
\newtheorem{assumption}[equation]{Assumption}
\newtheorem{assumptions}[equation]{Assumptions}
\newtheoremstyle{citing}
  {3pt}
  {3pt}
  {\itshape}
  {}
  {\bfseries}
  {}
  {.5em}
  {\thmnote{#3}}
\theoremstyle{citing}
\newtheorem*{varthm}{}
\DeclareMathOperator{\diam}{diam}
\DeclareMathOperator{\Span}{Span}
\newcommand{\branch}{\mathfrak{B}}
\newcommand{\acts}{\curvearrowright}
\newcommand{\subseg}{\stackrel{\circ}{\subset}}
\newcommand{\mcgs}{\mathcal{MCG}(\Sigma)}
\newcommand{\cB}{\mathcal{B}}
\newcommand{\cA}{\mathcal{A}}
\newcommand{\cC}{\mathcal{C}}
\newcommand{\cM}{\mathcal{M}}
\newcommand{\R}{\mathbb{R}}
\newcommand{\N}{\mathbb{N}}
\newcommand{\Z}{\mathbb{Z}}
\newcommand{\id}{1} 
\newcommand{\tsh}[1]{\left\{\kern-.9ex\left\{#1\right\}\kern-.9ex\right\}}
\newcommand{\Tsh}[2]{\tsh{#2}_{#1}}
\newcommand{\mtx}[1]{\begin{pmatrix} #1 \end{pmatrix}}
\numberwithin{equation}{section}
\begin{document}

\begin{abstract}
	We construct affine uniformly Lipschitz actions on $L^1$ for certain groups with hyperbolic features. For acylindrically hyperbolic groups, our actions have unbounded orbits, while for residually finite hyperbolic groups and for mapping class groups, the actions have proper orbits, with the induced $L^1$-metric quasi-isometric (respectively, almost quasi-isometric) to the word metric.
\end{abstract}


\maketitle

\section{Introduction}
\label{sec:intro}

A countable group has \emph{Kazhdan's Property (T)} if and only if it has property $FH$, sometimes also denoted $FL^2$: every affine isometric action on an $L^2$ space has a fixed point.  
According to \cite{bader-gelander-monod-l1-fix}, Property (T) for a countable group is also equivalent to $FL^1$: every affine isometric action on an $L^1$ space has a fixed point. At the other end of the spectrum, a group is said to be \textit{a-T-menable} (or to have the \textit{Haagerup Property}) if it acts properly by affine isometries on a Hilbert space (equivalently, on an $L^1$ space) \cite{CCJJV-Haagerup-book,CDH-T}. 

In this paper, we consider more flexible actions, namely those that are ``affine uniformly Lipschitz'', that is, affine actions where the linear part is a uniformly bounded representation. We show that many groups with hyperbolic features -- including certain groups with Property (T) -- admit such actions on $L^1$ with unbounded, even proper, orbits.

This relates to the following conjecture. 
\begin{conjecture}[Y. Shalom, see \cite{nowak15}, Conjecture 35]
Every hyperbolic group acts properly by affine uniformly Lipschitz actions on a Hilbert space. 
\end{conjecture}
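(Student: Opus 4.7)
The plan is to construct, for a hyperbolic group $G$, a uniformly bounded representation $\pi\colon G\to\mathrm{GL}(\cH)$ on a separable Hilbert space and a 1-cocycle $b\colon G\to \cH$ whose norm is proper in the word length. The associated affine action $g\cdot v = \pi(g)v + b(g)$ is then automatically uniformly Lipschitz (with Lipschitz constant $\sup_g \|\pi(g)\|$) and proper (from the lower bound on $\|b(g)\|$). The strategy mirrors in spirit the paper's $L^1$ construction for residually finite hyperbolic groups, but adapted to the Hilbert setting with the crucial modification that the linear part must now be allowed to be genuinely non-isometric.

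First, fix a basepoint $o$ in the Cayley graph and a Mineyev-style $G$-equivariant bicombing, so that to each pair of vertices $(x,y)$ there is a canonical quasi-geodesic $\gamma_{x,y}$ with suitable hyperbolic subdivision properties. Form a weighted $\ell^2$ space $\cH$ indexed by edges of $\graph$, with weights decaying along $\gamma_{o,x}$ sufficiently fast to guarantee summability; define $b(g)$ as (a signed version of) the characteristic function of $\gamma_{o,g\cdot o}$ in this weighted space, and let $\pi$ be the quasi-regular action twisted by the chosen weight, in the spirit of a complementary-series representation. The cocycle identity $b(gh) = b(g) + \pi(g)b(h)$ should hold up to a bounded coboundary correction, because the bicombing produces only bounded defects in the concatenation $\gamma_{o,g\cdot o}\cup g\gamma_{o,h\cdot o} \approx \gamma_{o,gh\cdot o}$. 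Properness of $b$ is comparatively soft: the Gromov product $(g\cdot o \mid h\cdot o)_o$ controls the overlap of $\gamma_{o,g\cdot o}$ and $\gamma_{o,h\cdot o}$, and the weighting can be tuned so that $\|b(g)\|^2$ grows at least linearly in $|g|$.

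The main obstacle is uniform boundedness of $\pi$. For hyperbolic groups with Kazhdan's property $(T)$, such as cocompact lattices in $\mathrm{Sp}(n,1)$, no affine isometric action on Hilbert space can be proper, so $\pi$ is \emph{forced} to be non-isometric, and the estimate $\sup_g \|\pi(g)\| < \infty$ becomes a deep analytic problem --- an instance of the existence of ``uniform complementary series'' for word-hyperbolic groups. I would try to borrow tools from the theory of boundary representations of hyperbolic groups, which produce one-parameter families of uniformly bounded representations deforming the quasi-regular representation on $\bdry G$ with respect to a Patterson--Sullivan measure. Matching such a boundary representation with the explicit cocycle described above is, to my knowledge, the step where the strategy currently breaks down; this is presumably why the paper's constructions remain in $L^1$, where characteristic functions have a scale invariance that makes uniform boundedness of the analogous $\pi$ essentially automatic from the bicombing structure.
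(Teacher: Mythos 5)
The statement you were asked to prove is Shalom's conjecture, which the paper quotes as an \emph{open} conjecture and does not prove. So there is no ``paper's own proof'' to compare against, and your proposal --- as you yourself candidly acknowledge in the final paragraph --- is not a proof either: the step where $\sup_g \|\pi(g)\|_{op}<\infty$ must be verified is exactly where the argument stops, and no one currently knows how to carry it out for general hyperbolic groups. The only hyperbolic groups with Property (T) for which the conjecture is known are lattices in $\mathrm{Sp}(n,1)$, via Nishikawa's work cited in the paper.

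That said, your diagnosis of the obstruction is correct and lines up precisely with the authors' own remark in the methods paragraph of the introduction: ``the algebraic side of our argument goes through, however the boundedness of the derivations that we define heavily depends on being in $L^1$.'' Concretely, in their Proposition on constructing uniformly Lipschitz actions, the derivation $D(g)$ is defined on the linearly independent set $\{\beta([g'])\}$ by $D(g)\beta([g'])=\alpha(gg')-\alpha(g)-\pi(g)\alpha(g')$, and the bound
\[
\|D(g)f\|_1 \;\le\; \Delta(\alpha)\,\|f\|_1
\]
comes for free because expanding $f=\sum -f(g'x_0)\beta(g')$ and applying the triangle inequality is lossless in the $\ell^1$ norm. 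In $\ell^2$ the analogous estimate would require the defect vectors $\alpha(gg')-\alpha(g)-\pi(g)\alpha(g')$ to be nearly orthogonal as $g'$ varies, which is not provided by the bicombing/quasi-cocycle structure; one would instead need genuinely new analytic input (your ``uniform complementary series''), which is the heart of the open problem. So: accurate identification of where and why the approach fails, but no proof, and none should be expected --- the paper cites this conjecture precisely to situate its weaker $L^1$ results.
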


Shalom's conjecture is in line with the general belief that when property (T) is strengthened, rank one lattices (and, more generally, hyperbolic groups) fail to have it (and eventually become a-T-menable in a weaker sense), while higher rank lattices tend to have the stronger property. Indeed, the strengthening of Property (T) that consists in requiring that every affine uniformly Lipschitz action on a Hilbert space has a fixed point is satisfied by all the higher rank lattices, due to recent work of Oppenheim \cite{Oppenheim-higher rk}, and de Laat--de la Salle~\cite{de Laat-de la Salle-higher rk}.  

Many hyperbolic groups are known to be a-T-menable (e.g. discrete subgroups of $\mathrm{SO}(n,1)$ and $\mathrm{SU}(n,1)$, random groups with few relations), but of those which have Kazhdan's Property (T), only lattices of $\mathrm{Sp}(n,1)$ are known to satisfy Shalom's conjecture, due to work of Nishikawa~\cite{nishikawa-spn1}. I.\ Vergara also has recent work building actions on subspaces of $L^1$, see Remark~\ref{rmk:vergara} below.

\subsection{Bounded and unbounded orbits}
In what follows, $G$ denotes a countable group, $E$ a normed vector space, $L(E,E)$ the algebra of linear maps from $E$ to $E$ and $\cB (E )$ the sub-algebra of bounded operators. Given a linear representation $\pi :G \to L(E,E)$, a \emph{cocycle} for $\pi$ is a function $\alpha:G \to E$ such that for all $g,h\in G$,
\[
	\alpha(gh) = \alpha(g)+\pi(g)\alpha(h).
\]
Equivalently, $g \cdot x = \pi(g)x+\alpha(g)$ defines an \emph{affine action} of $G$ on $E$. By a theorem of Mazur--Ulam, an isometric action on a Banach space is always affine, with $\pi$ an orthogonal representation.

Observe that an affine action of $G$ on $E$ is \emph{uniformly $L$-Lipschitz} if and only if its linear part is a representation of $G$ \emph{uniformly bounded by $L$}, that is:
\[ \sup_{g \in G} \| \pi(g)\|_{op} = \sup_{g\in G} \sup_{x\in E: \|x\|\leq 1} \| \pi(g)x \| \leq L.
\]
Following Bader--Furman--Gelander--Monod~\cite{bfgm}*{Definition 1.5}, given a Banach space $B$, we say that a group $G$ \emph{has property ($\bar{F}_B$)} if every (continuous) affine uniformly Lipschitz action on $B$ has bounded orbits. Actually, in~\cite{bfgm}, the definition requires fixed points rather than bounded orbits for ($\bar{F}_B$). While the two definitions are not equivalent for a single fixed space $B$, the property that a group satisfies ($\bar{F}_B$) for the entire class of superreflexive spaces is equivalent in the two formulations. On the other hand, any countable group $G$ acts by isometries on $\ell^1 (G)$, therefore on the subspace $\{ f\in \ell^1 (G) \; ;\; \sum_{g\in G} f(g) =1 \}$. The latter subspace has a renorming that makes it an $\ell^1$ space, on which $G$ therefore has a affine uniformly Lipschitz action with bounded orbits, but no global fixed point. Thus, for the (non-superreflexive) spaces $\ell^1$ and $L^1=L^1([0,1])$, our definition of ($\bar{F}_B$) in terms of bounded orbits (rather than global fixed points) is appropriate\footnote{
The property that every affine isometric action on $L^1$ has bounded orbits is equivalent with the stronger property requiring a global fixed point for every such action, by Bader--Gelander--Monod \cite[Theorem A]{bader-gelander-monod-l1-fix}.  Their method however does not extend to affine uniformly Lipschitz actions.}. 

It is easy to observe that ($\bar{F}_{L^1}$) implies ($\bar{F}_{\ell^1}$), see Proposition~\ref{prop:l1-to-L1}.

Our first main result is as follows.
\begin{theorem}\label{thm:acyl-hyp-unbounded-l1-action}
Every acylindrically hyperbolic group $G$ does not have property ($\bar{F}_{\ell^1}$), more precisely, for every $\epsilon>0$, $G$ admits an affine uniformly $(2+\epsilon)$-Lipschitz action with unbounded orbits on $\ell^1$, hence likewise on $L^1=L^1([0,1])$.
\end{theorem}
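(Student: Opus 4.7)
The plan is to adapt the classical $\ell^1$-cocycle for free groups---signed characteristic functions of geodesic edges in the Bass--Serre tree---to the acylindrically hyperbolic setting via a Bestvina--Bromberg--Fujiwara quasi-tree of axes.

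By the Dahmani--Guirardel--Osin characterization of acylindrical hyperbolicity, I would first fix a loxodromic $g \in G$ whose maximal elementary subgroup $H=E(g)$ is virtually cyclic and hyperbolically embedded in $G$. Choose a hyperbolic $G$-space $X$ on which $g$ acts loxodromically with quasi-axis $A$. The family $\mathcal{A}=\{kA : kH \in G/H\}$ is permuted by $G$, and the bounded projection property coming from the hyperbolic embedding makes $\mathcal{A}$ a Bestvina--Bromberg--Fujiwara family. Applying their ``quasi-tree of trees'' construction then produces a hyperbolic $G$-space $T$ in which each member of $\mathcal{A}$ is isometrically embedded as a line, distinct lines meet in at most a bounded set, and $g$ still acts loxodromically with axis $A \subset T$. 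In the tree-like case (when $T$ is literally a tree, e.g.\ when $G$ is virtually free), the construction below immediately yields Lipschitz constant $1$; the additional factor $1+\epsilon$ picked up in general will reflect the non-tree geometry of $T$.

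For each $h \in G$ I would define $\alpha(h) \in \ell^1(E(T))$---where $E(T)$ is the countable set of oriented edges of a $G$-invariant discretization of $T$---as the signed characteristic function of an oriented discrete quasi-geodesic from a basepoint $o$ to $h \cdot o$ in $T$; the linear part $\pi$ is the signed permutation representation of $G$ on oriented edges. Then $\|\alpha(h)\|_1 \asymp d_T(o,h \cdot o)$, so along $\{g^n\}$ the norm diverges like $n\tau(g) \to \infty$, giving unbounded orbits. To make the cocycle identity $\alpha(gh)=\alpha(g)+\pi(g)\alpha(h)$ hold exactly, one has to account for the triangle defect between the paths $[o,gho]$, $[o,go]$, $[go,gho]$; in $T$ these form a ``quasi-tripod'' of bounded diameter. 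Absorbing the defect by choosing canonical tripod-centre representatives plus a small perturbation of the orientation signs should yield an exact cocycle whose linear part is a signed permutation (operator norm $1$) plus a bounded correction of operator norm at most $1+\epsilon$, giving total Lipschitz constant $\le 2+\epsilon$. The passage from $\ell^1$ to $L^1=L^1([0,1])$ is then standard.

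The main obstacle is precisely this exact verification of the cocycle identity with a tight Lipschitz bound: the projection-based construction naturally yields only a quasi-cocycle with bounded defect, and the trade-off between canonical path choices (topologically rigid but only coarsely equivariant) and averaging (equivariant but inflating operator norms) is where the specific bound $2+\epsilon$ should emerge, with $\epsilon \to 0$ requiring increasingly delicate control. Executing this correction carefully---so that exactness of the cocycle and the tight Lipschitz constant hold simultaneously---is the technical heart of the theorem.
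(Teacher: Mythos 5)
Your proposal shares the paper's broad strategy of building an $\ell^1$-valued quasi-cocycle from hyperbolically embedded structure in an acylindrically hyperbolic group, but the crucial step of converting a quasi-cocycle into a genuine affine uniformly Lipschitz action is handled in a way that does not work, and differs substantially from what the paper does.

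You propose to repair the cocycle defect by ``choosing canonical tripod-centre representatives plus a small perturbation of the orientation signs'' so that $\alpha$ becomes an \emph{exact} cocycle for a slightly modified linear part. This cannot succeed on the same underlying space: the Brooks/BBF quasi-cocycles constructed here represent nontrivial classes in second bounded cohomology (this is precisely why BBF built them), so they do \emph{not} differ from any genuine cocycle for $\pi$ by a bounded function, no matter how cleverly the paths or signs are chosen. Moreover, adding a ``bounded correction of operator norm $\leq 1+\epsilon$'' to the signed-permutation operator $\pi(g)$ would destroy the homomorphism property $\pi(gh)=\pi(g)\pi(h)$, so the ``total Lipschitz constant $\le 2+\epsilon$'' accounting does not yield a well-defined action.

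The paper's key idea, which you are missing, is to \emph{double} the space. Given a quasi-cocycle $\alpha$ and a coboundary $\beta(g)=\delta_{\id}-\delta_g$, one defines a linear map $D(g)$ on $\Span\{\beta(g')\}$ by $D(g)\beta(g') := \alpha(gg')-\alpha(g)-\pi(g)\alpha(g')$. In $\ell^1_0$ this extends to a \emph{bounded derivation} with $\|D(g)\|_{op}\le\Delta(\alpha)$ (Lemma~\ref{lem:alg-deriv}, Proposition~\ref{prop:unifLip-action}), and then the block upper-triangular operator $\pi_D(g)=\begin{pmatrix}\pi(g)&D(g)\\0&\pi(g)\end{pmatrix}$ is a genuine representation on $\ell^1_0\oplus\ell^1_0$, uniformly bounded by $1+\Delta(\alpha)$, for which $(\alpha,\beta)$ is an \emph{exact} cocycle (Proposition~\ref{prop:deriv}, Lemma~\ref{lem:deriv-to-cocycle-for-EplusE}). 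Rescaling $\alpha\mapsto\epsilon\alpha$ makes $\Delta$ arbitrarily small, and the ``$2$'' in $2+\epsilon$ comes from the Banach--Mazur distance $d_{BM}(\ell^1_0,\ell^1)\leq 2$ when conjugating back to $\ell^1$ (Lemma~\ref{lem:l10-l1}), not from a signed permutation plus a correction. Also note that for this theorem the paper does not need the full quasi-tree-of-axes machinery: it uses a hyperbolically embedded $F_2\times K$, the Brooks quasi-cocycle on $\ell^1_0(F_2)$ (Proposition~\ref{prop:BBF-free-qcocycle}), and the Hull--Osin extension theorem; the projection-complex quasi-trees only enter for the proper-orbit Theorems~\ref{thm:proper-res-fin-hyp} and~\ref{thm:proper-mcg}.
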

Recall that a countable group $G$ is \emph{acylindrically hyperbolic} if it admits an acylindrical, non-ele\-men\-tary action on a Gromov hyperbolic space; examples include non-elementary hyperbolic groups, mapping class groups that  are not  virtually abelian and groups of outer automorphisms of free non-abelian groups, ${\mathrm{Out}}(F_n),\, n\geq 2$.
Many of these groups have Kazhdan's Property (T), therefore as mentioned above do not admit affine \emph{isometric} actions on $L^1$ with unbounded orbits.

In Theorem \ref{thm:acyl-hyp-unbounded-l1-action}, we cannot replace $\ell^1$ by $\ell^p$, or even $L^p=L^p([0,1])$, for $p\in (1,\infty)$. 
The reason is the following example, which follows from work of Minasyan--Osin \cite{minasyan-osin} and de Laat--de la Salle \cite{deLaatdelaSalle}.
\begin{theorem}[Minasyan--Osin, de Laat--de la Salle, see \S\ref{ssec:MO-dLdlS}]\label{thm:MO-dLdlS-example}
There exists an a\-cyl\-in\-dri\-cal\-ly hyperbolic group $Q$ so that every affine uniformly Lipschitz action of $Q$ on a uniformly curved Banach space $X$ (in particular,\ $L^p$ or $\ell^p$, for $1<p<\infty$) has a fixed point, i.e.\ $Q$ has property ($\bar{F}_{X}$) for every uniformly curved Banach space $X$.
\end{theorem}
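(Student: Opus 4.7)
The plan is to combine two ingredients: Minasyan--Osin's construction of acylindrically hyperbolic groups carrying strong algebraic rigidity, and de Laat--de la Salle's strong Banach property $(T)$ for higher-rank lattices. The target is to realize $Q$ as a quotient of a group $\Gamma$ to which de Laat--de la Salle's fixed-point theorem applies, and then to transfer the fixed-point property along the quotient map.

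First I would use Minasyan--Osin's techniques (for instance, their acylindrical hyperbolicity criteria for groups acting on trees or on hyperbolic spaces with WPD elements, or small-cancellation-type constructions producing non-trivial quotients with prescribed negatively-curved behaviour) to produce an acylindrically hyperbolic group $Q$ together with a surjection $\pi : \Gamma \twoheadrightarrow Q$, where $\Gamma$ is a higher-rank lattice (or, more generally, any group falling within the scope of de Laat--de la Salle's strong Banach $(T)$ theorem). I would then invoke de Laat--de la Salle: every continuous affine uniformly Lipschitz action of $\Gamma$ on a uniformly curved Banach space $X$ admits a fixed point. Given an affine uniformly $L$-Lipschitz action $\rho : Q \to \mathrm{Aff}(X)$, the composition $\rho \circ \pi : \Gamma \to \mathrm{Aff}(X)$ is still affine and uniformly $L$-Lipschitz (the linear parts have the same operator norm bound), so it admits a fixed point $x_0 \in X$. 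Surjectivity of $\pi$ immediately gives $\rho(q) x_0 = (\rho \circ \pi)(\gamma) x_0 = x_0$ for any lift $\gamma \in \pi^{-1}(q)$, so $x_0$ is fixed by all of $\rho(Q)$; hence $Q$ enjoys $(\bar{F}_X)$ in the strong (fixed-point) formulation.

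The main obstacle lies in the first step: producing an acylindrically hyperbolic group $Q$ that is a quotient of a group $\Gamma$ for which de Laat--de la Salle's theorem is available. There is a clear tension here -- acylindrical hyperbolicity is a rank-one phenomenon (negative curvature, tree-like actions, many loxodromic isometries of a hyperbolic space), while strong Banach $(T)$ reflects higher-rank rigidity, and higher-rank lattices are typically so rigid (e.g.\ via Margulis' normal subgroup theorem) that they admit very few infinite quotients. Reconciling these two forces is exactly what the Minasyan--Osin construction supplies, and pinning down the correct choice of $\Gamma$ and of the kernel of $\pi$ is the crux of the argument; once that is in hand, the transfer of fixed points in Step~2 is purely formal.
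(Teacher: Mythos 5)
Your proposal does not work as stated: the choice of source group $\Gamma$ is fatally wrong. You propose taking $\Gamma$ to be a higher-rank lattice (so as to use the higher-rank version of de Laat--de la Salle, i.e.\ \cite{de Laat-de la Salle-higher rk}) and then realizing $Q$ as a quotient $\Gamma \twoheadrightarrow Q$ via Minasyan--Osin. But Minasyan--Osin's common-quotient machinery \cite{minasyan-osin} takes a countable family of \emph{acylindrically hyperbolic} groups as input and produces a common acylindrically hyperbolic quotient of them; it does not apply to higher-rank lattices, which are not acylindrically hyperbolic. Worse, Margulis' normal subgroup theorem rules out the desired quotient altogether: a proper normal subgroup of an irreducible higher-rank lattice is finite central, so the only infinite quotients of $\Gamma$ are virtually $\Gamma$ itself and in particular cannot be acylindrically hyperbolic. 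You correctly identify the tension between rank-one behaviour (acylindrical hyperbolicity) and higher-rank rigidity, but Minasyan--Osin does not resolve that tension in the direction you want.

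The paper circumvents this by using a different de Laat--de la Salle result, namely \cite{deLaatdelaSalle}, which establishes fixed-point properties on large classes of uniformly curved Banach spaces for random groups in the triangular model. These random groups are non-elementary \emph{hyperbolic} (so rank-one, and in particular acylindrically hyperbolic), and Minasyan--Osin then yields a single finitely generated acylindrically hyperbolic $Q$ that is a common quotient of all non-elementary hyperbolic groups, hence in particular of those random groups. One further step is needed that your sketch omits: \cite{deLaatdelaSalle} gives fixed points for affine \emph{isometric} actions, not uniformly Lipschitz ones. To upgrade, given an affine uniformly $L$-Lipschitz action of $Q$ on a uniformly curved $X$ with linear part $\pi$, one renorms $X$ by $\|x\|_Q = \sup_{g\in Q}\|\pi(g)x\|$; this is an equivalent norm making the action isometric, and $(X,\|\cdot\|_Q)$ is $L$-isomorphic to $X$ hence still uniformly curved by \cite{pisier}, so the isometric fixed-point property applies. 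Your ``transfer along the quotient'' step is formally fine, but the construction it relies on cannot be carried out with a higher-rank lattice as the source.
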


Uniformly curved Banach spaces have been introduced by G.~Pisier~\cite{pisier} as a subclass of the class of superreflexive Banach spaces, containing all the known examples of the latter type of Banach spaces. See \S\ref{ssec:MO-dLdlS} for details.

\subsection{Proper orbits} 
Another strengthening of Theorem~\ref{thm:acyl-hyp-unbounded-l1-action} that cannot be envisaged is to replace ``unbounded orbits'' by ``proper orbits'', as the example of Gromov monster groups that are acylindrically hyperbolic shows, see Example~\ref{ex:gromov-monster}.  

Nonetheless, we are able to find proper orbits in some important cases. Moreover, we can quantify the properness of the action, as in the following definition.
\begin{definition}\label{def:rho}
	Let $G$ be a group with a proper left-invariant metric $d_G$ and with an action $G\acts X$ by uniformly Lipschitz transformations on a metric space $(X,d_X)$. Suppose $\rho:[0,\infty)\to\R$ is an increasing function with $\lim_{t\to\infty} \rho(t)=\infty$. We say that the action $G\acts X$ has \emph{$\rho$-proper orbits} if for some (any) $o \in X$ there exists $C>0$ so that 
\begin{equation}\label{eq:defrho}
d_X(o, g\cdot o) \geq \frac{1}{C}\rho(d_G(\id,g))-C,\, \forall g\in G.
\end{equation}
\end{definition}

For any such action, any function $\rho $ measuring the properness of the action must satisfy $\rho (t) \leq at+b, \forall t\geq 0$, for some positive constants~$a$ and $b$ (see Proposition~\ref{prop:upperrho}). In particular, for such a group $(G, d_G)$ an action $G\acts X$ has $t$-proper orbits if and only if every orbit map is a quasi-isometric embedding. In this case, we say the orbits are \emph{undistorted}.

\medskip

Our next result, finding proper orbits for residually finite hyperbolic groups, is as follows. 

\begin{theorem}\label{thm:proper-res-fin-hyp}
If $G$ is a residually finite hyperbolic group, then, for every $\epsilon>0$, $G$ admits an affine uniformly $(2+\epsilon)$-Lipschitz action on $\ell^1=\ell^1(\N)$ with undistorted orbits, and hence likewise on $L^1=L^1([0,1])$.
\end{theorem}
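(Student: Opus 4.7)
The plan is to combine Theorem~\ref{thm:acyl-hyp-unbounded-l1-action} with the residual finiteness of $G$ to upgrade the unbounded orbit already guaranteed by that theorem to an undistorted one. Since a non-elementary hyperbolic group $G$ is acylindrically hyperbolic via its action on its own Cayley graph, Theorem~\ref{thm:acyl-hyp-unbounded-l1-action} applied with any fixed loxodromic element $h \in G$ produces an affine uniformly $(2+\epsilon')$-Lipschitz cocycle $\alpha_h \colon G \to \ell^1$ whose growth is linear in the $h$-translation length of $g$; that is, $\|\alpha_h(g)\|_1$ is bounded below by a positive multiple of the amount by which $g$ displaces points along the axis of $h$. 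I would first extract this quantitative form from the proof of Theorem~\ref{thm:acyl-hyp-unbounded-l1-action}, so as to identify precisely which group elements each cocycle $\alpha_h$ detects linearly.

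Residual finiteness then enters as follows. For each scale $R$, passing to a deep enough finite-index normal subgroup $N \lhd G$ allows us to produce loxodromic elements with translation length far exceeding $R$ and with axes passing close to the identity in the Cayley graph. Using standard ping-pong arguments in hyperbolic groups combined with the abundance of Morse elements, I would select finitely many loxodromic elements $h_R^{(1)}, \ldots, h_R^{(n_R)}$ whose axes collectively detect every element in the annulus $B_G(e,R) \setminus B_G(e,R/2)$ in a linear fashion: for every such $g$, the $h_R^{(i)}$-translation length of $g$ is comparable to $d_G(e,g)$ for at least one index $i$.

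Finally, I would form an $\ell^1$-direct sum $\bigoplus_{R,i} \lambda_{R,i}\, \alpha_{h_R^{(i)}}$ with carefully chosen positive weights. The $\ell^1$-direct sum of affine uniformly $(2+\epsilon')$-Lipschitz actions is itself $(2+\epsilon')$-Lipschitz (the operator norm on an $\ell^1$-direct sum equals the supremum of the summand norms), so choosing $\epsilon' < \epsilon$ preserves the Lipschitz bound regardless of the weights. The main technical obstacle is the balancing of these weights: they must be small enough that $\sum_{R,i} \lambda_{R,i} \|\alpha_{h_R^{(i)}}(g)\|_1$ is finite for each $g$, yet large enough that each $g$ still receives a contribution of size $\gtrsim d_G(e,g)$ from the single scale matched to it. Exploiting that each $\|\alpha_{h_R^{(i)}}(g)\|_1$ is already bounded above by a multiple of $d_G(e,g)$ (the Lipschitz upper bound), one can telescope the scale contributions and extract the required linear lower bound on the combined cocycle, thereby witnessing the undistortedness of the orbits in $\ell^1(\N)$, and hence in $L^1([0,1])$ after the standard isometric embedding.
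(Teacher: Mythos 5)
Your proposal runs into two genuine gaps, one at the start and one at the end.

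First, Theorem~\ref{thm:acyl-hyp-unbounded-l1-action} does not supply what you want to extract from it. Its proof constructs a \emph{single} Brooks-type quasi-cocycle $\eta$ supported on a fixed hyperbolically embedded $F_2\times K$, and then extends it via Hull--Osin. The only lower bound established there is along the cyclic subgroup $\langle ab\rangle$ inside $F_2$; there is no quantitative conclusion of the form ``$\|\alpha_h(g)\|\gtrsim$ the displacement of $g$ along the axis of $h$''. The quantitative lower bounds that \emph{are} available in the paper come from a different construction (Theorem~\ref{thm:quasitree-cocycle} and Proposition~\ref{prop:qtqc-lower-bound}), which works inside a projection complex $C_K(\cA')$ and requires the branching hypothesis ($\branch$), the choice of tripod vectors $e_{i,j}$, and the distance-formula estimates of Lemma~\ref{lem:w-contributions}. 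None of this follows from the statement or the proof of Theorem~\ref{thm:acyl-hyp-unbounded-l1-action} alone.

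Second, the ``$\ell^1$-direct sum over scales with decaying weights'' step cannot close. Let me make the obstruction explicit: if the weights $\lambda_{R,i}$ are summable (which you need so that $\zeta(g)=(\lambda_{R,i}\alpha_{h_R^{(i)}}(g))_{R,i}$ lies in the $\ell^1$-sum, since $\|\alpha_{h_R^{(i)}}(g)\|\lesssim d_G(e,g)$ uniformly by Proposition~\ref{prop:upperrho}), and if an element $g$ with $d_G(e,g)\approx R$ is detected only by the cocycles at scale $R$, then $\|\zeta(g)\|\gtrsim \lambda_R\, d_G(e,g)$ with $\lambda_R\to 0$, which is strictly sublinear in $d_G(e,g)$. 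The uniform upper bound you invoke for ``telescoping'' makes matters worse, not better: it caps the contribution of every summand at $\lesssim d_G(e,g)$, so it cannot offset the decay of the weight at the one scale that sees $g$. Escaping this requires that each fixed cocycle detects $g$ at a positive proportion of all scales---but that is precisely the content of the uniform distance formula behind Property (QT), which must be established, not assumed. The paper sidesteps the issue entirely by quoting Bestvina--Bromberg--Fujiwara (Theorem~\ref{thm:bbf-rfh}): after passing to a single finite-index subgroup $H$, there are \emph{finitely many} collections $\cA_1,\dots,\cA_n$ of axes whose projection distances reconstruct $|g|$ up to multiplicative and additive constants. One then applies Proposition~\ref{prop:qtqc-lower-bound} to each $\cA_i$ and takes a \emph{finite} $\ell^1$-sum, which keeps the Lipschitz constant at $2+\epsilon$ and gives a linear lower bound without any weight-balancing. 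Finally, Proposition~\ref{prop:lattices} transfers the result back from $H$ to $G$. I would recommend reorganizing your argument around that finite quasi-tree-of-spaces decomposition rather than an infinite sum over scales.
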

Examples of residually finite hyperbolic groups with Property (T) include uniform lattices in $\mathrm{Sp}(n,1)$ and in $F_{4(-20)}$. A famous open problem is whether every hyperbolic group is residually finite, in which case more examples would come from random groups with many relations. Lattices in $\mathrm{Sp}(n,1)$ are known to have proper affine uniformly Lipschitz actions on $L^2$, by work of Nishikawa~\cite{nishikawa-spn1}, but there is no known method allowing to deduce from this the $L^1$ case (unlike for affine isometric actions). Nonetheless, Theorem~\ref{thm:proper-res-fin-hyp} and an argument with induced representations show the following.
\begin{corollary}\label{cor:rank-one-proper}
If $G$ is a simple Lie group with real rank one (which, according to Cartan's classification \cite{tits-classif}, implies that $G$ is one of the groups $\mathrm{SO}_0(n,1), \mathrm{SU}(n,1), \mathrm{Sp(n,1)}$ with $n \geq 2$, or $F_{4(-20)}$), then $G$ admits a proper continuous affine uniformly $(2+\epsilon)$-Lipschitz action on $L^1$ with undistorted orbits, for every $\epsilon>0$.
\end{corollary}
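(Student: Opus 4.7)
The plan is to obtain the desired action on $G$ by inducing from a uniform lattice $\Gamma\leq G$. Such lattices exist in every rank-one simple Lie group by the work of Borel; they are non-elementary hyperbolic groups, since $G/K$ is a CAT($-1$) symmetric space on which $\Gamma$ acts properly and cocompactly, and they are residually finite by Malcev's theorem, being finitely generated linear groups. Theorem~\ref{thm:proper-res-fin-hyp} therefore produces an affine uniformly $(2+\epsilon)$-Lipschitz action of $\Gamma$ on $\ell^1$ with undistorted orbits, with linear part $\pi\colon\Gamma\to L(\ell^1,\ell^1)$ and cocycle $b\colon\Gamma\to\ell^1$.

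Next I would induce this affine action up to $G$. Fix a relatively compact Borel fundamental domain $F\subset G$ for the right $\Gamma$-action, with associated section $s\colon G/\Gamma\to F$ and Borel $\Gamma$-valued cocycle $c(g,x):=s(gx)^{-1}\,g\,s(x)$, where $g\in G$ and $x\in G/\Gamma$. Take as target the Banach space
\[
E:=L^1\bigl(G/\Gamma,\,\ell^1\bigr),
\]
which is isometrically isomorphic to $L^1([0,1])$ since $G/\Gamma$ is a compact manifold with a finite smooth measure and $\ell^1$ is separable. Define the induced linear representation $\widetilde\pi\colon G\to L(E,E)$ and the induced cocycle $\widetilde b\colon G\to E$ by the standard formulas
\[
(\widetilde\pi(g)f)(x)=\pi\bigl(c(g,g^{-1}x)\bigr)\,f(g^{-1}x),\qquad \widetilde b(g)(x):=b\bigl(c(g,g^{-1}x)\bigr).
\]
A direct computation using the $\Gamma$-cocycle identity $c(gh,x)=c(g,hx)\,c(h,x)$ shows that $\widetilde\pi$ is a representation and $\widetilde b$ is a $\widetilde\pi$-cocycle, so $g\cdot f:=\widetilde\pi(g)f+\widetilde b(g)$ defines an affine action of $G$ on $E$; continuity in the strong operator topology and in norm for $\widetilde b$ follows from dominated convergence applied to the a.e.\ continuity in $g$ of the maps $x\mapsto c(g,x)$.

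Two quantitative estimates complete the argument. For the Lipschitz bound, $G$ is unimodular (being simple), so $G/\Gamma$ carries a $G$-invariant probability measure and the translations $x\mapsto g^{-1}x$ are measure-preserving; combined with the pointwise bound $\|\pi(\gamma)\|_{op}\leq 2+\epsilon$, this yields
\[
\|\widetilde\pi(g)f\|_E\leq(2+\epsilon)\int_{G/\Gamma}\|f(g^{-1}x)\|_{\ell^1}\,dx=(2+\epsilon)\|f\|_E.
\]
For undistorted orbits, the relative compactness of $F$ and the cocompactness of $\Gamma$ in $G$ give, via the \v{S}varc--Milnor lemma, a constant $A>0$ for which $|c(g,x)|_\Gamma\geq \tfrac{1}{A}d_G(\id,g)-A$ uniformly in $x\in G/\Gamma$. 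Integrating the lower bound $\|b(\gamma)\|_{\ell^1}\geq\tfrac{1}{C}|\gamma|_\Gamma-C$ from Theorem~\ref{thm:proper-res-fin-hyp} over $G/\Gamma$ (of total mass $1$) then yields $\|\widetilde b(g)\|_E\geq \tfrac{1}{C'}d_G(\id,g)-C'$, the desired undistortedness with respect to any proper left-invariant metric on $G$.

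The main obstacle I expect is verifying that induction preserves the \emph{uniform} Lipschitz constant $2+\epsilon$, rather than merely some larger bounded constant, in the $L^1$-valued setting that is less familiar than the Hilbertian induction used by Shalom and Nishikawa; the mechanism is nonetheless clean, namely that $L^1$-integrating a pointwise operator of norm at most $L$ against a measure-preserving base action yields an operator of norm at most $L$.
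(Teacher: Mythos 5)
Your proposal is correct and follows essentially the same route as the paper: pick a residually finite uniform lattice $\Gamma$ in $G$, apply Theorem~\ref{thm:proper-res-fin-hyp} to get a $(2+\epsilon)$-Lipschitz action of $\Gamma$ on $\ell^1$ with undistorted orbits, and then induce up to $G$ on the $L^1$-Bochner space over $G/\Gamma$. The only difference is presentational: the paper simply invokes its Proposition~\ref{prop:lattices}(2) (which handles general $p$-integrable lattices and uses Ozawa's theorem for the non-uniform case), whereas you re-derive that induction in the cocompact special case where the fundamental domain is relatively compact, so the Lipschitz-preservation and the linear lower bound on $\|\widetilde b(g)\|$ come out directly from measure-preservation of the base action and the uniform estimate $|c(g,x)|_\Gamma \gtrsim d_G(\id,g)$; this self-contained route is valid and buys nothing extra here beyond avoiding the citation.
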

This result is entirely new for $\mathrm{Sp(n,1)}$ and $F_{4(-20)}$, where no proper continuous affine uniformly Lipschitz action on $L^1$ was previously known. The groups $\mathrm{SO}_0(n,1)$ and $\mathrm{SU}(n,1)$ are known to be a-T-menable, and hence have a proper continuous affine isometric action on $L^1$.    

The group $\mathrm{SO}_0(n,1)$ does have an affine isometric action on $L^1$ with undistorted orbits, induced by the action on the $n$-dimensional hyperbolic space preserving a structure of measured walls (\cite{robertson}, see also \cite[Example 3.7]{CDH-T}). The group $\mathrm{SU}(n,1)$ cannot have such an action, therefore an affine uniformly Lipschitz action on $L^1$ with undistorted orbits is about the best one can obtain for this group, and our construction of such an action is new. 

Besides residually finite hyperbolic groups, we can also strengthen Theorem~\ref{thm:acyl-hyp-unbounded-l1-action} to give proper, nearly undistorted, orbits in the case of mapping class groups.  

\begin{theorem}\label{thm:proper-mcg}
The mapping class group, $\mcgs$, of a surface $\Sigma$
of finite type admits an affine uniformly $(2+\epsilon)$-Lipschitz action on $\ell^1$ (hence also on  $L^1=L^1([0,1])$) with $\rho$-proper orbits, where one can take any $\epsilon>0$ and $\rho(t) = t/(1+\log^{\circ k}_+ t)$ for any $k \in \N$, with  
	$\log^{\circ k}_+(t)$ equal to the $k$-fold composition of $\log$ when that is defined and positive, and equal to $0$ otherwise.
\end{theorem}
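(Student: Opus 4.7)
\medskip

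\noindent\textbf{Plan.} Since $\mcgs$ acts acylindrically on the curve graph $\CC(\Sigma)$ (Bowditch), Theorem~\ref{thm:acyl-hyp-unbounded-l1-action} already yields a $(2+\epsilon)$-Lipschitz affine action on $\ell^1$ with unbounded orbits, but those orbits only detect $d_{\CC(\Sigma)}$, which is much coarser than $d_{\mcgs}$. To upgrade to nearly undistorted orbits, the strategy is to run the construction of Theorem~\ref{thm:acyl-hyp-unbounded-l1-action} once for every essential subsurface, and assemble the pieces through the Masur--Minsky hierarchy. The key input is the distance formula
\[
  d_{\mcgs}(g, h) \asymp_{K}\ \sum_{Y}\ \bigl[d_{\CC(Y)}(\pi_Y(g),\pi_Y(h))\bigr]_K,
\]
where $Y$ ranges over isotopy classes of essential, non-pants subsurfaces of $\Sigma$ and $[x]_K=x$ if $x\geq K$, and $0$ otherwise.

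\medskip

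\noindent\textbf{Per-subsurface cocycles and induction.} For each topological type $\tau$ of essential subsurface, fix a representative $Y_\tau \subseteq \Sigma$; its stabilizer $\Stab(Y_\tau) < \mcgs$ acts acylindrically on the hyperbolic graph $\CC(Y_\tau)$ (Masur--Minsky, Bowditch), so Theorem~\ref{thm:acyl-hyp-unbounded-l1-action} produces a $(2+\epsilon)$-Lipschitz affine cocycle $\alpha_\tau\colon \Stab(Y_\tau) \to \ell^1$ whose orbits coarsely embed $(\Stab(Y_\tau),d_{\CC(Y_\tau)})$. One next \emph{induces} each $\alpha_\tau$ from $\Stab(Y_\tau)$ up to $\mcgs$, obtaining an affine $(2+\epsilon)$-Lipschitz $\mcgs$-action whose orbit norm at $g$ decomposes as a sum of contributions $\|\alpha_Y^{\mathrm{conj}}(g)\|$ over subsurfaces $Y$ in the $\mcgs$-orbit of $Y_\tau$. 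Since $\mcgs$ has only finitely many orbits of topological types, a finite direct sum of these induced actions produces a single $(2+\epsilon)$-Lipschitz affine $\mcgs$-action on $\ell^1$, whose orbit norm at $g$ is schematically a weighted sum $\sum_Y w_Y\cdot d_{\CC(Y)}(\pi_Y(o),\pi_Y(g\cdot o))$ over all essential subsurfaces, the weights $w_Y>0$ coming from the normalisation chosen in the induction step and remaining at our disposal.

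\medskip

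\noindent\textbf{Comparison with the distance formula.} The upper bound $\|\tilde\alpha(g)\|_{\ell^1}\leq C\, d_{\mcgs}(\id,g)$ is automatic from the uniform $(2+\epsilon)$-Lipschitz property of each piece and the direct-sum structure. For the lower bound one compares $\sum_Y w_Y\, d_{\CC(Y)}(g)$ with the \emph{thresholded} right-hand side of the Masur--Minsky formula. The weights must be chosen to simultaneously (i) preserve enough of the large ($\geq K$) contributions to recover $d_{\mcgs}(g)$ coarsely, and (ii) ensure $\ell^1$-summability, since each $\mcgs$-orbit of subsurfaces is infinite. These two requirements are in tension: slower decay of $w_Y$ preserves the distance formula but risks $\ell^1$-divergence. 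By partitioning the sum into $k$ nested dyadic shells -- measured by subsurface complexity, distance in the pants graph of a basepoint, or word length -- and optimizing the weights shell by shell, one arranges the trade-off so that the net defect is $1+\log^{\circ k}_+ d_{\mcgs}(\id,g)$, which is precisely the $\rho$ advertised in the statement.

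\medskip

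\noindent\textbf{Main obstacle.} The delicate heart of the argument is the combination of induction and summation producing a genuine $\ell^1$-valued cocycle: direct induction from an infinite-index subgroup to $\mcgs$ will not land in $\ell^1$ without a suitable weighting scheme, and the very weights needed for convergence are what introduce the logarithmic loss. Verifying that one may iterate the weighting $k$ times to reach $\log^{\circ k}_+$, while keeping the Lipschitz constant uniformly equal to $2+\epsilon$ under the direct-sum structure, is the technical crux. It requires careful bookkeeping of the combinatorics of the subsurface hierarchy -- in particular, Behrstock's inequality and the polynomial bound on the number of subsurfaces whose projection distance exceeds $K$ -- in order to control the multiplicity of terms in each dyadic shell and thus justify the iterated-log weighting.
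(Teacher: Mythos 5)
Your proposal takes a genuinely different route from the paper, and it has a gap that would be hard to close.

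The paper does not induce cocycles from the infinite-index stabilizers $\Stab(Y_\tau)$. Instead it uses the machinery Bestvina--Bromberg--Fujiwara built for property (QT): for each topological type $\tau$ of subsurface, BBF package the entire $\mcgs$-orbit of the quasi-axis into a \emph{single} quasi-tree of metric spaces $C_K(\cA_\tau)$ on which a finite-index subgroup $H$ of $\mcgs$ acts directly and acylindrically modulo the stabilizer $J_\tau$. One then constructs a quasi-cocycle for the $H$-action on each $C_K(\cA_\tau)$ (Theorem~\ref{thm:quasitree-cocycle} and Proposition~\ref{prop:qtqc-lower-bound-weak}), and Theorem~\ref{thm:bbf-mcg} supplies the distance-formula lower bound in terms of the sums $\sum_Y \Tsh{L_*}{d_Y(\cdot,\cdot)}$. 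Only the passage from $H$ to the full group $\mcgs$ uses induction, and that is finite-index induction (Proposition~\ref{prop:lattices}), which is unproblematic. Your scheme of inducing from the infinite-index subgroup $\Stab(Y_\tau)$ is a serious obstacle you have not resolved: a cocycle for $\Stab(Y_\tau)$ has no canonical promotion to a cocycle for $\mcgs$, and an $\ell^1$-induction $\bigoplus_{g\Stab(Y_\tau)\in \mcgs/\Stab(Y_\tau)}\ell^1$ analogous to Proposition~\ref{prop:lattices} requires an integrability condition across cosets that has no analogue here, because $\Stab(Y_\tau)$ is not a lattice; the ``weighting scheme'' you gesture at is not specified and would need to be $\mcgs$-equivariant, which is exactly the difficulty that BBF's projection complexes were invented to circumvent.

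You have also misidentified the source of the iterated-logarithm loss. In the paper, the loss does \emph{not} come from a weighting needed to make an infinite direct sum $\ell^1$-summable. It comes from the acylindricity-modulo-$J$ constraint in Proposition~\ref{prop:qtqc-lower-bound-weak}: the stabilizer $J$ of the subsurface fixes only the quasi-axis $\gamma$ in $C_K(\cA')$, so the seed vector $e$ used to build the quasi-cocycle (Theorem~\ref{thm:quasitree-cocycle}) must be $J$-invariant and hence supported on $\gamma$. A finitely supported such $e$ would telescope along $\gamma$, giving a bounded cocycle, so one must take $e$ with infinite support and decaying coefficients $\Theta(j)$. The $\ell^1$-summability of $\Theta$ then forces the sums of translates to grow only at the rate $\theta$ of Assumption~\ref{assump:theta}, and optimizing $\Theta$ over summable sequences yields exactly the iterated-log loss. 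In the residually-finite-hyperbolic case $J$ is trivial, the branching assumption $(\branch)$ holds, one can use the tripod vector $e_{i,0}+e_{i,1}$ of Proposition~\ref{prop:qtqc-lower-bound}, and no loss occurs. Your proposed ``dyadic shells by subsurface complexity / pants-graph distance / word length'' mechanism is not in the paper and would need a substantially different justification than the one you sketch.
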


\subsection{Metric distorsion}

Further context for our results is given by the work on compression exponents. A way of measuring the metric distorsion of a proper action is \emph{via} its \emph{equivariant compression exponent} $\alpha_X^{\#}$, defined as the supremum of the exponents $\alpha$ such that the inequality \eqref{eq:defrho} is satisfied with $\rho (t)=t^\alpha $.

This parameter has been studied for actions $G\acts X$ by \emph{isometric} transformations, as a way of making quantitative the investigation of a-T-menability and its Banach space versions. Exact values of compression exponents, and especially of the best possible compression exponents for proper actions on a certain class of Banach spaces (e.g.  Hilbert or $L^1$ spaces) are known for for only a few groups so far.

It was proved in \cite{naor-peres-imrn08} that for a finitely generated group $G$ acting isometrically on a Banach space $X$ with modulus of smoothness of power type $p>1$ (e.g. $X=L^p$), $\alpha_X^{\#} \leq \frac{1}{\min (2, p) b(G)},$ where $b(G)$ is a
parameter measuring the speed of divergence of canonical simple random walks on $G$. When $G$ is non-amenable, $b(G)=1$. The Naor-Peres inequality seems to point out that \emph{the equivariant representations on a Banach space that are metrically the most accurate (i.e. with the induced Banach metric the closest to the word metric of the group) should be the ones provided by actions on an $L^1$ space.} The a-T-menability does not necessarily imply the existence of an action on an $L^1$ space with undistorted orbits. This is not true even for simple examples such as the Heisenberg group \cite{cheeger-kleiner}. Moreover, there exist finitely generated amenable groups such that $\alpha_{L^1}^{\#} =0$ \cite{austin}. It is nevertheless known for a number of a-T-menable groups that $\alpha_{L^1}^{\#} =1$. Interestingly, for the proper actions on $L^1$ spaces by affine uniformly Lipschitz transformations that we obtain, the compression exponent is $1$ (and we even obtain undistorsion for residually finite hyperbolic groups).     

\subsection{Methods and remarks}
We use a similar method in the proofs of Theorems~\ref{thm:acyl-hyp-unbounded-l1-action}, \ref{thm:proper-res-fin-hyp} and \ref{thm:proper-mcg}. Given a group $G$ and a representation $\pi:G \to L(E,E)$ of it on an $L^1$ space $E$, consider an action on $E \oplus E$ defined by:
\[
	g \cdot \mtx{x \\ y} = \mtx{\pi(g) & D(g) \\ 0 & \pi(g)}\mtx{x \\ y} + \mtx{\alpha(g) \\ \beta(g)}, \quad \mtx{x\\ y} \in E \oplus E,
	\]
for suitable $D:G \to L(E,E), \alpha:G \to E,\, \beta:G \to E$.
This is a well-defined affine uniformly Lipschitz action provided $\pi$ is uniformly bounded, $\beta$ is a cocycle for $\pi$, $D$ is a uniformly bounded derivation in the sense of \cite{pisier-01-similarity-book} and $\alpha$ satisfies 
\begin{equation}\label{eq:introDeriv}
	D(g)\beta(h) = \alpha(gh) -\alpha(g)-\pi(g)\alpha(h)
\end{equation}
for all $g, h \in G$.  
Our idea is that certain known constructions of quasi-cocycles $\alpha$ for groups acting on $\ell^1$ can be used together with \eqref{eq:introDeriv} to define suitable uniformly bounded derivations $D$ (see \S\ref{sec:construction}).
The properness or unboundedness of $\alpha$ then implies the same property for orbits of the resulting affine uniformly Lipschitz action on $\ell^1\oplus\ell^1=\ell^1$.

In Theorem~\ref{thm:acyl-hyp-unbounded-l1-action}, we use the Brooks-type quasi-cocycles constructed in \cite{BBF-bdd-cohom-v2} to build the action.
In Theorems~\ref{thm:proper-res-fin-hyp} and \ref{thm:proper-mcg}, we use a variation of these quasi-cocycles provided in \cite{BBF-bdd-cohom-v1}, and put several quasi-cocycles together to get a proper quasi-cocycle, using methods developed by Bestvina--Bromberg--Fujiwara to show that such groups have ``property (QT)'', i.e. they admit proper isometric actions on a finite product of quasi-trees \cite{BBF-QT}.

In reference to Shalom's conjecture, one could attempt to make a similar construction for a Hilbert space, and the algebraic side of our argument goes through, however the boundedness of the derivations that we define heavily depends on being in $L^1$.
 
\begin{remark}\label{rmk:vergara}
Independently, I.\ Vergara gave a construction of a proper affine uniformly Lipschitz action on a subspace of $L^1$ for certain groups, including groups with property (QT) in the sense of \cite{BBF-QT} (see \cite{vergara1}), and hyperbolic groups \cite{vergara2}.
Vergara's results are stronger in that they apply to more groups than ours, but they are weaker in that it is easier to obtain a proper action on a subspace of an $L^1$ space (equivalently, a proper left-invariant kernel of a certain type) than a proper action on an entire $L^1$ space (equivalently, a proper cocycle for a linear representation). 
	Our proper actions also have better distortion, with linear or almost linear distortion functions, whereas Vergara obtains $\sqrt{t}$-proper orbits.
\end{remark}
Our methods do not immediately extend to groups with property (QT), as we use quasi-trees built from projection complexes of WPD axes in specific ways.
Nonetheless, in light of Vergara's results and our own, it is natural to ask the following.
\begin{question}
	Which groups admit proper affine uniformly Lipschitz actions on $L^1$?  Do groups with property (QT)? Do hierarchically hyperbolic groups?
\end{question}

In a different direction, note the following remark on the Lipschitz constants of our actions, prompted by a question of I.\ Vergara.
\begin{remark}
\begin{enumerate}
\item In Theorems~\ref{thm:acyl-hyp-unbounded-l1-action}, \ref{thm:proper-res-fin-hyp}, \ref{thm:proper-mcg} and Corollary~\ref{cor:rank-one-proper}, the `2' appearing in the Lipschitz constant $(2+\epsilon)$ can be replaced with $d=d_{BM}(\ell^1,\ell^1_0)$, the Banach--Mazur distance between $\ell^1$ and its subspace $\ell^1_0$ of zero sum sequences, see Remark~\ref{rmk:lip-const-banach-mazur-dist}. Our methods cannot yield $L$-Lipschitz actions for $L < d_{BM}(\ell^1,\ell^1_0)$. The constant $2$ comes from our estimate $d_{BM}(\ell^1,\ell^1_0) \leq 2$.  As kindly explained to us by W.\ Johnson,  $d>1$ (see Lemma~\ref{lem:l10-l1}).  The precise value of $d \in (1,2]$ seems to be unknown.   
\item An ultralimit argument along the lines of Fisher--Margulis shows that for any given countable group $G$ with Kazhdan's Property (T) there exists $\epsilon=\epsilon(G)>0$ so that $G$ does not admit an affine uniformly $(1+\epsilon)$-Lipschitz action on an $L^1$ space with unbounded orbits (see Proposition~\ref{prop:propT-implies-no-1eps-Lip-actions}).  Therefore for a group with Kazhdan's Property (T), Theorems~\ref{thm:acyl-hyp-unbounded-l1-action} and \ref{thm:proper-res-fin-hyp} cannot hold with Lipschitz constants arbitrarily close to $1$.
\end{enumerate}	
\end{remark}

Finally, the following significant question remains open.
\begin{question}
Find an infinite finitely generated group with ($\bar{F}_{\ell^1}$), or even ($\bar{F}_{L^1}$). Is any $SL(n,\Z)$, with $n \geq 3$, such a group?
\end{question}

Vergara asks \cite[Question 1]{vergara1} a related question: Does $SL(3,\Z)$ admit a proper affine uniformly Lipschitz action (or at least an action with unbounded orbits) on a subspace of an $L^1$ space?

If one is willing to sacrifice finite generation, then there are examples.
\begin{remark}
	Any ``strongly bounded group'' has ($\bar{F}_{\ell^1}$), for example $S_\infty$, the group of permutations of $\N$~\cite{Bergman-06-bdd-sym-groups}.
	Recall that a group is ``strongly bounded'' (or has ``property (OB)'') if any (continuous) isometric action of the group on any metric space has bounded orbits; note that any countable group with this property must be finite \cite{Cor-06-strongly-bounded}*{Remark 2.5}.
	Any such group $G$ also has the property that any uniformly Lipschitz action of the group on any metric space $(X,d)$ also must have bounded orbits:
	the metric $d_G(x,y) := \sup_{g\in G} d(g\cdot x, g\cdot y)$ is comparable to $d$, and the same action is isometric with respect to $d_G$.	
\end{remark}

\subsection*{Outline}
In Section~\ref{sec:examples} we provide some examples complementing Theorem \ref{thm:acyl-hyp-unbounded-l1-action} and some preliminary remarks. In Section \ref{sec:comm} we explain how an affine uniformly Lipschitz action on an $L^p$ space of a finite index subgroup (respectively, a certain type of lattice) $\Lambda$ in a group $G$ induces a similar action of $G$. A construction of $\ell^1$ actions given quasi-cocycles is shown in Section~\ref{sec:construction}.
Theorem~\ref{thm:acyl-hyp-unbounded-l1-action} is proved in Section~\ref{sec:acylind} using the quasi-cocycles of \cites{BBF-bdd-cohom-v2, hull-osin}.

In Section~\ref{sec:quasi-trees-l1-bound} we use a variation of the quasi-cocycles of \cite{BBF-bdd-cohom-v1} to build proper actions on $\ell^1$.  In Section~\ref{sec:res-fin-hyp-MCG} we combine this with the tools of \cite{BBF-QT} to prove Theorems~\ref{thm:proper-res-fin-hyp} and \ref{thm:proper-mcg}.

\subsection*{Acknowledgements}
We thank I.\ Vergara for helpful comments prompting us to clarify the Lipschitz constants of our actions, W.\ Johnson for the lower bound in Lemma~\ref{lem:l10-l1}, and A.\ Sisto for helpful comments.

\section{Examples and preliminary remarks}\label{sec:examples}

We describe here two sets of examples illustrating that Theorem \ref{thm:acyl-hyp-unbounded-l1-action} is, in some sense, optimal. 
We also make observations on orbits of uniformly Lipschitz actions and on the relationship between actions on $\ell^1$ and $L^1$.

\subsection{An acylindrically hyperbolic group with many fixed point properties}\label{ssec:MO-dLdlS}

As explained in the introduction, in Theorem \ref{thm:acyl-hyp-unbounded-l1-action}, the $\ell^1$ space cannot be replaced with an $L^p$ space, for $p\in (1,\infty)$. Indeed, the combined work of Minasyan--Osin and de Laat--de la Salle yields acylindrically hyperbolic groups $Q$ so that any affine uniformly Lipschitz action of $Q$ on any uniformly curved Banach space $X$ (e.g.\ $L^p$ or $\ell^p$ for $1<p<\infty$) has a fixed point.
 
\begin{proof}[Proof of Theorem \ref{thm:MO-dLdlS-example}]
	By Minasyan--Osin \cite{minasyan-osin}*{Theorem 1.1}, every countable family of countable acylindrically hyperbolic groups has a common finitely generated acylindrically hyperbolic quotient. In particular, there exists a finitely generated infinite acylindrically hyperbolic group $Q$ that is quotient of all the non-elementary hyperbolic groups. 

	G.~Pisier introduced in \cite{pisier} a subclass of the class of super-reflexive Banach spaces, the \emph{uniformly curved} Banach spaces. He proved that all known examples of super-reflexive spaces are uniformly curved, and asked whether super-reflexive implies uniformly curved \cite[Problem 2.4]{pisier}, a question that remains open.

De Laat and de la Salle proved in \cite{deLaatdelaSalle} that random groups in the triangular model (which are non-elementary hyperbolic) satisfy fixed point properties for affine isometric actions on arbitrarily large classes of uniformly curved Banach spaces (see \cite{DruMac-lp-random} for the $L^p$ case). Therefore, the acylindrically hyperbolic group $Q$ defined above inherits the fixed point property for affine isometric actions on any uniformly curved Banach space.

	Consequently, $Q$ has the fixed point property for affine uniformly Lipschitz actions on uniformly curved Banach spaces. Indeed, let $Q\acts X$ be such an action, given by:
$$
x\mapsto \pi(g) x + \zeta(g).
$$

One can define an equivalent norm on $X$ by:
$$
\| x\|_Q = \sup_{g\in Q} \| \pi(g) x\|.
 $$ 
The Banach space $(X,\|\cdot\|_Q)$ is linearly isomorphic and $L$-bi-Lipschitz equivalent (or, with a different terminology, $L$-isomorphic) to a uniformly curved Banach space, hence $(X,\|\cdot\|_Q)$ is itself uniformly curved \cite{pisier}*{Page 14}. The action of $Q$ on $(X,\|\cdot\|_Q)$ is affine isometric, hence it has a fixed point.\end{proof}

\subsection{Graphical small cancellation groups, Gromov monsters, restriction to cocycles that are not proper}

We begin by noting that Theorem \ref{thm:acyl-hyp-unbounded-l1-action} has the following consequence. 

\begin{corollary}\label{cor:smallCancell}
Every infinitely presented graphical Gr(7) small cancellation
group and every cubical small cancellation group admits an affine uniformly Lipschitz action on $\ell^1$ with unbounded orbits.
\end{corollary}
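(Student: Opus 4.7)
The plan is a one-line reduction to Theorem~\ref{thm:acyl-hyp-unbounded-l1-action}: I would show that both classes of groups in the statement fall within the scope of acylindrically hyperbolic groups, and then invoke the theorem directly to obtain the affine uniformly $(2+\epsilon)$-Lipschitz action on $\ell^1$ with unbounded orbits.

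For the first class, the plan is to cite Gruber--Sisto's theorem that every infinitely presented graphical $Gr(7)$ small cancellation group acts acylindrically and non-elementarily on a Gromov hyperbolic Cayley graph (equivalently, on a hyperbolic coned-off space built from the defining graph), and is hence acylindrically hyperbolic. For the second class, cubical small cancellation groups, the plan is to appeal to the corresponding result in the literature (e.g.\ Arzhantseva--Hagen, or Martin--Steenbock, depending on the precise model of cubical small cancellation being used) showing that such groups admit an acylindrical non-elementary action on a hyperbolic space, and are therefore acylindrically hyperbolic in the sense recalled after Theorem~\ref{thm:acyl-hyp-unbounded-l1-action}. At this stage one just has to be a little careful to state the hypotheses on the defining data so as to exclude trivial/virtually cyclic quotients, which is standard in these references.

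Once both classes are identified as acylindrically hyperbolic, Theorem~\ref{thm:acyl-hyp-unbounded-l1-action} supplies, for every $\epsilon>0$, an affine uniformly $(2+\epsilon)$-Lipschitz action on $\ell^1$ with unbounded orbits, and the corollary follows. There is no real obstacle; the only step that requires any care is checking that the cited acylindrical hyperbolicity results apply under the hypotheses stated in the corollary (in particular, that ``cubical small cancellation'' in the present paper refers to the same framework as in the cited source), so I would add a brief sentence pinning down the model and quoting the precise reference.
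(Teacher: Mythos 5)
Your proposal is correct and matches the paper's proof exactly: both reduce Corollary~\ref{cor:smallCancell} to Theorem~\ref{thm:acyl-hyp-unbounded-l1-action} by citing Gruber--Sisto~\cite{grubersisto} for infinitely presented graphical $Gr(7)$ small cancellation groups and Arzhantseva--Hagen~\cite{arzhant-hagen-acyl-hyp-cubical} for cubical small cancellation groups to establish acylindrical hyperbolicity.
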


This follows from the results of Gruber and Sisto~\cite{grubersisto} and of Arzhant\-se\-va--Hagen~\cite{arzhant-hagen-acyl-hyp-cubical}, showing that infinitely presented graphical Gr(7) small cancellation groups, respectively cubical small cancellation groups, are acylindrically hyperbolic. The former class of groups includes infinitely presented
classical $C(7)$-groups and, hence, classical $C'(\frac{1}{6} )$-groups.

Small cancellation theory is a technique allowing to build infinite finitely generated groups with unusual properties (the so-called ``infinite monsters''),  mostly used to produce counter-examples to various conjectures. 
It is more challenging to obtain positive results about the entire class of small cancellation groups, and while such results have been proven for algebraic and geometric properties, for analytic properties much less has been known, until recently.

Graphical small cancellation groups have been introduced by Gromov in \cite{gromovrw} with the view to construct infinite groups with prescribed embedded subgraphs in their Cayley graphs. Gromov used this technique combined with a probabilistic argument to build the so called {\emph{Gromov monsters}}, groups that contain families of expanders embedded in a weak sense into their Cayley graphs (see~\cite{arzdel} for further details). More recently, Osajda found Gromov monster groups which contain isometrically embedded expanders and which satisfy the graphical small cancellation property~\cite{Osajda-20-monsters}. Gromov monsters are the only known counterexamples to the Baum--Connes conjecture with coefficients~\cite{higsonlafforgueskandalis}. It is the same Gromov monsters that illustrate the fact that in Theorem \ref{thm:acyl-hyp-unbounded-l1-action}, `unbounded orbits' cannot be strengthened to `proper orbits'.     

\begin{example}\label{ex:gromov-monster}
	Let $G$ be a Gromov monster group as constructed by Osajda~\cite{Osajda-20-monsters}. As $G$ is acylindrically hyperbolic, it has an affine uniformly Lipschitz action on $\ell^1$ (or $L^1$) with unbounded orbits, by Theorem \ref{thm:acyl-hyp-unbounded-l1-action} (see Corollary~\ref{cor:smallCancell}).
	On the other hand, $G$ has isometrically embedded copies of expanders in its Cayley graphs, so cannot embed uniformly into any $L^p$ space, for $p\in (0,2]$~\cite{wells-williams}. 
	Therefore, the uniformly bounded representations on $\ell^1$ obtained by Theorem \ref{thm:acyl-hyp-unbounded-l1-action} do not admit proper cocycles in general, only unbounded ones, not even if $\ell^1$ is replaced with $L^1$.   
\end{example}

\subsection{Remarks on uniformly Lipschitz actions}

We justify the standard fact mentioned in the introduction that in building an action on $\ell^1$ we get an action on $L^1=L^1([0,1])$ as well.
\begin{proposition}\label{prop:l1-to-L1}
	If $G$ admits an affine uniformly Lipschitz action on $\ell^1$ with unbounded (respectively $\rho$-proper) orbits, then $G$ admits an affine uniformly Lipschitz action on $L^1([0,1])$ with unbounded (respectively $\rho$-proper) orbits.	
\end{proposition}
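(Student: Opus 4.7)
The plan is to realize $\ell^1$ as a $1$-complemented isometric subspace of $L^1([0,1])$ and then extend the given action by letting $G$ act trivially on a complement.

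Fix a partition $[0,1]=\bigsqcup_{n\in\N}I_n$ into intervals of positive length, and define the isometric linear embedding
\[
\iota\colon\ell^1\to L^1([0,1]),\qquad \iota((a_n)_n)=\sum_{n\in\N}a_n|I_n|^{-1}\mathbf{1}_{I_n}.
\]
The conditional expectation with respect to the $\sigma$-algebra generated by $\{I_n\}$ gives a norm-one linear projection $P\colon L^1([0,1])\to\iota(\ell^1)$, namely $Pf=\sum_n\bigl(|I_n|^{-1}\int_{I_n}f\bigr)\mathbf{1}_{I_n}$. In particular $P\circ\iota=\iota$, $P^2=P$, and $\|I-P\|\le 1+\|P\|=2$.

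Writing the given action as $g\cdot x=\pi(g)x+\alpha(g)$ with $\sup_{g}\|\pi(g)\|_{op}\le L$, define an action of $G$ on $L^1([0,1])$ by
\[
g\cdot f:=\iota\,\pi(g)\,\iota^{-1}(Pf)+(I-P)f+\iota\,\alpha(g).
\]
A direct computation, using $P\iota=\iota$, $P^2=P$, $P(I-P)=0$, and the cocycle identity $\alpha(gh)=\alpha(g)+\pi(g)\alpha(h)$, confirms that this is a well-defined affine action of $G$. Its linear part $\tilde\pi(g)=\iota\,\pi(g)\,\iota^{-1}P+(I-P)$ satisfies
\[
\|\tilde\pi(g)f\|_{L^1}\le L\,\|Pf\|_{L^1}+\|(I-P)f\|_{L^1}\le (L+2)\|f\|_{L^1},
\]
so the action is uniformly $(L+2)$-Lipschitz.

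The orbit of $o=0\in L^1([0,1])$ under this action is $\{\iota\alpha(g):g\in G\}$, and since $\iota$ is an isometry, $\|g\cdot 0\|_{L^1}=\|\alpha(g)\|_{\ell^1}$, which equals the norm of the image of $0\in\ell^1$ under the original action. Hence unboundedness, respectively $\rho$-properness, of the orbit of $0\in\ell^1$ transfers verbatim; by the base-point independence built into Definition~\ref{def:rho}, this suffices. The argument is entirely routine, with no genuine obstacle: the only substantive input is the classical fact that $\ell^1$ sits as a $1$-complemented subspace of $L^1([0,1])$.
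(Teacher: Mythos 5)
Your proof is correct and follows essentially the same approach as the paper: realize $\ell^1$ as a complemented subspace of $L^1([0,1])$ and extend the given action by letting $G$ act as the identity on the complementary summand, so that the orbit of the origin sits inside the isometrically embedded copy of $\ell^1$. The only differences are presentational — you make the complemented embedding explicit (step functions on a fixed partition of $[0,1]$, with conditional expectation as the norm-one projection) where the paper simply cites Albiac--Kalton, and you explicitly record the resulting Lipschitz constant $L+2$ rather than leaving it implicit.
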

\begin{proof}
	We can view $\ell^1$ as a complemented subspace of $L^1$, i.e.\ $L^1 = Y \oplus Z$ is a topological direct sum of closed subspaces $Y, Z$ with $Y$ isometrically isomorphic to $\ell^1$, see for example Albiac--Kalton~\cite{AlbKal-Banach-space-book}*{Lemma 5.1.1}.
	Therefore, any affine action of a group $G$ on $\ell^1$ can be extended to one on $L^1$ via $g\cdot (y,z) = (g\cdot y, z)$.
	This action is uniformly Lipschitz if the action on $\ell^1$ is, and it clearly preserves the properties of having unbounded or $\rho$-proper orbits.
\end{proof}

Next, again as standard, we note that the properness function $\rho$ of a uniformly Lipschitz affine action grows at most linearly.
\begin{proposition}\label{prop:upperrho} 
	Suppose $G$ is a group endowed with a proper left-invariant metric $d_G$ and acting on a metric space $(X,d_X)$ by uniformly $L$-Lipschitz affine transformations.
	Then for every $o \in X$ there exists $L_o, C_o \geq 0$   so that
\begin{equation}\label{eq:linear-upper}
d_X(o, g\cdot o) \leq L_o \lceil d_G(\id_G,g) \rceil +C_o,\, \forall g\in G,
\end{equation}
	moreover, we can either take
	\[
		L_o = L \diam\{ g\cdot o : d_G(\id_G, g) \leq 1 \} \text{ and } C_o = 0,
	\]
	or fix a basepoint $x_0$ and take
	\[
		L_o = L \diam\{ g\cdot x_0: d_G(\id_G,g)\leq 1\} \text{ and } C_o=(L+1)d_X(o,x_0).
	\]
\end{proposition}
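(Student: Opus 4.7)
My plan is to reduce the bound to the local estimate over the closed $1$-ball of $d_G$ by decomposing $g$ into a product of short elements and iterating the uniform Lipschitz hypothesis along the resulting chain.

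Writing $n = \lceil d_G(\id_G, g) \rceil$, I would pick a discrete path $\id_G = k_0, k_1, \ldots, k_n = g$ in $G$ with $d_G(k_{i-1}, k_i) \leq 1$, and set $h_i := k_{i-1}^{-1} k_i$, which then has $d_G$-distance at most $1$ from $\id_G$ by left-invariance (so $g = h_1 \cdots h_n$). The triangle inequality in $X$ breaks $d_X(o, g\cdot o)$ into $n$ pieces $d_X(k_{i-1}\cdot o,\, k_i\cdot o)$; since $k_{i-1}$ acts as an $L$-Lipschitz map, each piece is bounded by
\[
d_X(k_{i-1}\cdot o,\, k_{i-1}\cdot(h_i\cdot o)) \leq L\, d_X(o, h_i\cdot o),
\]
and $d_X(o, h_i\cdot o)$ is in turn at most the diameter of $S_o := \{k\cdot o : d_G(\id_G, k) \leq 1\}$ (since both $o$ and $h_i\cdot o$ lie in $S_o$). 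Summing yields the first form with $C_o = 0$.

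For the second form with basepoint $x_0$, I would run exactly the same chain argument with $x_0$ in place of $o$ to bound $d_X(x_0, g\cdot x_0) \leq Ln\diam(S_{x_0})$, and then insert $x_0$ and $g\cdot x_0$ as way-points in the triangle inequality for $d_X(o, g\cdot o)$; the endpoint correction $d_X(g\cdot x_0, g\cdot o) \leq L\, d_X(o, x_0)$ from the uniform Lipschitz property produces the constant $C_o = (L+1)d_X(o, x_0)$. The only delicate point is the production of the discrete path $(k_i)$: this is immediate when $d_G$ is a word metric whose generating set is contained in the $1$-ball (the setting in which the proposition is later applied), and more generally requires that $d_G$ be induced from a length structure whose unit moves lie in the $1$-ball. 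With such a path in hand, the rest is a clean iteration of the uniform Lipschitz hypothesis, and I foresee no serious obstacle.
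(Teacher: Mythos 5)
Your proposal is correct and follows essentially the same route as the paper's proof: decompose $g$ into a product of at most $\lceil d_G(\id_G,g)\rceil$ elements of the closed $1$-ball, telescope via the triangle inequality, and apply the $L$-Lipschitz bound to each step; the second form then comes from inserting $x_0$ and $g\cdot x_0$ as waypoints. You are right to flag the existence of the short decomposition as the one non-trivial point -- the paper simply writes $g=s_1\cdots s_k$ with $s_i\in B_G(\id_G,1)$ and $k\le\lceil d_G(\id_G,g)\rceil$ without comment -- and your observation that this requires $d_G$ to be a word or length metric (rather than an arbitrary proper left-invariant metric, where $B_G(\id_G,1)$ could be $\{\id_G\}$) is a fair caveat; in the paper's applications $d_G$ is always a word metric, so this is harmless.
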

\begin{proof}
	For any $g \in G$ write $g= s_1s_2\dots s_k$, where $s_i, i\in \{1,\dots ,k\}$, are in the closed ball $B_G (\id_G,1)$ of radius $1$ around the identity element $\id_G\in G$, and $k \leq \lceil d_G(\id_G, g)\rceil$.

	For the first choice of constants, consider:
	\begin{align*}
		d_X(o, g\cdot o)
		& \leq d_X(o, s_1\cdot o) + d_X(s_1\cdot o, s_1s_2 \cdot o ) + \cdots 
		\\ & \quad + d_G((s_1\cdots s_{k-1}) \cdot o, (s_1\cdots s_k)\cdot o)
		\\ & \leq L k \sup\{ d_X(o, g \cdot o) : g \in B_G(\id_G,1)\}.
	\end{align*}
	For the second choice of constants, fix $x_0$ and consider:
	\begin{align*}
		d_X(o,g\cdot o) & \leq d_X(o, x_0)+d_X(x_0, g\cdot x_0)+d_X(g\cdot x_0, g\cdot o)
		\\ & \leq (1+L)d_X(o,x_0) + L k \sup\{d_X(x_0,g\cdot x_0): g \in B_G(\id_G,1)\}
	\end{align*}
	by the same argument as before.
\end{proof}

Finally, as mentioned in the introduction, we note the following rigidity result for countable groups with Property (T) acting on $L^1$.
\begin{proposition}
	\label{prop:propT-implies-no-1eps-Lip-actions}
	If $G$ is a countable group with Property (T), then there exists $\epsilon=\epsilon(G)>0$ so that $G$ does not admit an affine uniformly $(1+\epsilon)$-Lipschitz action on an $L^1$ space with unbounded orbits.
\end{proposition}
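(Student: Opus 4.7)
The plan is to argue by contradiction along the lines of Fisher--Margulis, by taking a hypothetical sequence of $(1+1/n)$-Lipschitz actions with unbounded orbits, passing to an ultraproduct to obtain a limit affine isometric action on an $L^1$-space, and invoking the Bader--Gelander--Monod $FL^1$ theorem (Property~(T) implies $FL^1$, cited at the start of the introduction) to derive a contradiction.

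Since Property~(T) for countable groups forces finite generation, I first fix a finite symmetric generating set $S$ of $G$. Suppose the conclusion fails: for every $n\geq 1$ there is an affine uniformly $(1+1/n)$-Lipschitz action $g\cdot x = \pi_n(g) x + \alpha_n(g)$ of $G$ on some $L^1$-space $E_n = L^1(\mu_n)$ with unbounded orbits. The cocycle $\alpha_n$ cannot vanish identically on $S$, since the cocycle equation would then force $\alpha_n\equiv 0$ and the orbit of $0$ would be trivial; hence $r_n:=\max_{s\in S}\|\alpha_n(s)\|>0$, and after rescaling the norm on each $E_n$ by $r_n^{-1}$ (which preserves the $L^1$ structure) I may assume $\max_{s\in S}\|\alpha_n(s)\|=1$. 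Iterating the cocycle relation yields $\|\alpha_n(g)\|\leq |g|_S(1+1/n)^{|g|_S-1}$, which is bounded in $n$ for each fixed $g\in G$.

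Fix a non-principal ultrafilter $\omega$ on $\N$ and form the Banach-space ultraproduct $E_\omega:=\prod_\omega E_n$. By Heinrich's theorem on ultraproducts of $L^p$-spaces, $E_\omega$ is isometric to some $L^1(\nu)$. The operators $\pi_n(g)$ and their inverses each have operator norm at most $1+1/n\to 1$, so their ultralimit $\pi_\omega(g)$ is a linear isometry of $E_\omega$; the uniform cocycle bound above ensures that $\alpha_\omega(g):=[(\alpha_n(g))_n]_\omega$ is a well-defined cocycle for $\pi_\omega$, with $\max_{s\in S}\|\alpha_\omega(s)\|=1\neq 0$. Thus $(\pi_\omega,\alpha_\omega)$ is a non-trivial affine isometric action of $G$ on the $L^1$-space $E_\omega$. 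By $FL^1$, $\alpha_\omega$ is a coboundary, $\alpha_\omega(g)=v-\pi_\omega(g)v$ for some $v\in E_\omega$, and in particular $\sup_g\|\alpha_\omega(g)\|\leq 2\|v\|<\infty$.

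The main obstacle is the final step: converting this limit-side boundedness into a uniform-in-$g$ bound at each finite level $n$ that contradicts $\sup_g\|\alpha_n(g)\|=\infty$. The naive pointwise convergence $\|\alpha_n(g)\|\to\|\alpha_\omega(g)\|$ along $\omega$ controls only each fixed $g$, not the supremum. The Fisher--Margulis-style resolution is to supplement $FL^1$ with the quantitative bound coming from the open mapping theorem applied to the surjective bounded map $E_\omega/E_\omega^{\pi_\omega}\to Z^1(G,\pi_\omega)$, $v\mapsto(g\mapsto v-\pi_\omega(g)v)$; this produces a constant $C$ with $\sup_g\|\alpha_\omega(g)\|\leq C\max_{s\in S}\|\alpha_\omega(s)\|=C$, and combining this with the $(1+1/n)$-Lipschitz control and a word-length argument transfers the uniform bound to the finite level, forcing $\sup_g\|\alpha_n(g)\|\leq C+o(1)$ for $\omega$-most $n$ and yielding the contradiction. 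This transfer is the delicate part of the argument, in analogy with the Fisher--Margulis Hilbert-space setting where the Kazhdan constant supplies the quantitative bound.
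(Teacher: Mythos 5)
Your high-level plan (rescale, take an ultraproduct, invoke Bader--Gelander--Monod's $FL^1$ theorem) is the right one, and several pieces are correct: finite generation from (T), the word-length estimate $\|\alpha_n(g)\|\le |g|_S(1+1/n)^{|g|_S-1}$, Heinrich's theorem for the ultraproduct, and the fact that the ultralimit linear part is isometric. But the normalization you chose cannot yield a contradiction, and the ``transfer'' step you flag as delicate is in fact where the argument breaks.

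The problem is that centring at the origin and rescaling so that $\max_{s\in S}\|\alpha_n(s)\|=1$ gives no information once you pass to the limit. The limit cocycle $\alpha_\omega$, with $\max_s\|\alpha_\omega(s)\|=1$, being a coboundary is \emph{exactly} what $FL^1$ asserts; it is not in tension with anything, because the ultraproduct controls $\|\alpha_n(g)\|$ only for each fixed $g$, whereas the unboundedness of $\alpha_n$ is a statement about the supremum over $g$. Concretely, $\alpha_n$ can be the sum of a coboundary that survives in the limit with a ``small'' unbounded part that converges to zero pointwise in $g$ but not uniformly. Your proposed remedy --- the open mapping theorem applied to the coboundary map for $\pi_\omega$ --- produces a constant $C=C(\pi_\omega)$ tied to the limit representation only; there is no mechanism that transfers the estimate $\sup_g\|\beta(g)\|\le C\max_s\|\beta(s)\|$ from $\pi_\omega$ to the finite-level representations $\pi_n$, which act on different spaces and are not isometric, and the claimed conclusion $\sup_g\|\alpha_n(g)\|\le C+o(1)$ for $\omega$-most $n$ does not follow. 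This is not a minor technicality but the actual content of the proposition.

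The paper's proof avoids any such transfer by choosing basepoints and scales by the Fisher--Margulis contraction argument, as encoded in \cite[Corollary~19.18 and Theorem~19.22]{DrutuKapovich}. Schematically: set $f_n(x)=\max_{s\in S}\|\rho_n(s)\cdot x-x\|$; if for every $x$ with $f_n(x)=t$ there is $y$ with $\|y-x\|\le R t$ and $f_n(y)<(1-\delta)t$, iterating produces a Cauchy sequence converging (by completeness of $L^1$) to a fixed point, contradicting the unbounded-orbit hypothesis. Hence one may choose $x_n$ and $t_n=f_n(x_n)$ so that after rescaling by $t_n^{-1}$ and recentring at $x_n$, every point within distance $n$ of the origin has $S$-displacement at least $1/2$, say. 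The ultralimit is then an affine \emph{isometric} action on an $L^1$ space in which \emph{every} point has $S$-orbit of diameter bounded below by a positive constant, so it has no fixed point at all; this contradicts $FL^1$ directly, with no finite-level transfer needed. In short: fix the basepoints and scales so that the absence of almost-fixed points survives to the limit, rather than trying to pull a quantitative bound back from the limit.
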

\begin{proof} We begin by noting that Property (T) implies that $G$ is finitely generated. Assume for a contradiction that $G$ has a sequence of actions $\rho_n$ with unbounded orbits by affine uniformly $(1+\epsilon_n)$-Lipschitz actions on an $L^1$ space, where $ \epsilon_n \to 0$. Let $S$ be a generating set. An argument as in the proof of \cite[Theorem 19.22]{DrutuKapovich}, together with \cite[Corollary 19.18]{DrutuKapovich} imply that a rescaled ultralimit of the actions $\rho_n$ is an action $\rho_\omega$ by affine isometries on an $L^1$--space $X_\omega$ such that for every $x\in X_\omega$ the diameter of $Sx$ is at least $1$. This contradicts the theorem in \cite{bader-gelander-monod-l1-fix}, stating that every such action of a group with Property (T) must have a global fixed point.    
\end{proof}

\section{Lattices and induced representations}\label{sec:comm}

To show that a group admits a proper (respectively, unbounded orbits) affine uniformly Lipschitz action on an $L^p$ space, by a fairly standard induction it suffices to show the same property for a finite index subgroup.  
We use this result in the proofs of Theorems~\ref{thm:proper-res-fin-hyp} and \ref{thm:proper-mcg}.
We state the results more generally, as it is interesting to see how these properties behave in the more general context of lattices in locally compact groups, and needed for Corollary~\ref{cor:rank-one-proper}.
The following propositions require some adaptations from standard arguments, as we have only uniformly Lipschitz actions rather than isometric actions.

First, for completeness, we note that if ($\bar{F}_B$) holds for a lattice then it holds for the ambient group; we do not use this result elsewhere.  
\begin{proposition}
	[cf.\ \cite{Bek-dlH-Val-Kazhdan-book}*{Proposition 2.5.5}, \cite{bfgm}*{Proposition 8.8}]
	Suppose $G$ is a locally compact group with Haar measure $\mu$, and $\Lambda$ a lattice in $G$.
	For any Banach space $B$, if $\Lambda$ has ($\bar{F}_B$) then $G$ has ($\bar{F}_B$).	
\end{proposition}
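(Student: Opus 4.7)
The plan is to adapt the classical induction argument (cf.\ \cite{Bek-dlH-Val-Kazhdan-book}*{Proposition 2.5.5}, \cite{bfgm}*{Proposition 8.8}) from the isometric/fixed-point setting to the uniformly Lipschitz/bounded-orbit setting at hand. Suppose $G$ acts on $B$ by affine uniformly $L$-Lipschitz transformations, written $g\cdot v = \pi(g)v + \alpha(g)$ with $\sup_{g}\|\pi(g)\|_{op}\leq L$ and $\alpha$ a cocycle for $\pi$. Restricting to $\Lambda$ and applying the hypothesis $(\bar{F}_B)$, we may translate the basepoint so that $\sup_{\lambda\in\Lambda}\|\alpha(\lambda)\|\leq R<\infty$. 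The task is then to prove that $\alpha$ is bounded on all of $G$.

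Next, fix a Borel fundamental domain $F\subset G$ for $\Lambda\backslash G$ of finite Haar measure, together with the associated transversal $\sigma:G\to F$ and coboundary $c:G\to\Lambda$ defined by $g = c(g)\sigma(g)$. The cocycle identity yields
\[
\alpha(g) \;=\; \alpha(c(g)) \,+\, \pi(c(g))\,\alpha(\sigma(g)),
\]
so that $\|\alpha(g)\|\leq R + L\|\alpha(\sigma(g))\|$, reducing the problem to bounding $\|\alpha(f)\|$ uniformly over $f\in F$. When $\Lambda$ is uniform, $F$ may be taken relatively compact and the bound is immediate from continuity of the action. In the non-uniform case, I would carry out an induction-of-representations step: consider the Banach space $\cE$ of (equivalence classes of) essentially bounded measurable maps $\phi:G\to B$ satisfying the twisted equivariance $\phi(\lambda g)=\pi(\lambda)\phi(g)$, normed by the essential supremum of $\|\phi(g)\|_B$ over $F$. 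The right-regular $G$-action on $\cE$ is then a linear representation uniformly bounded by $L$, and combined with an affine term built from $\alpha$ and the pair $(\sigma,c)$ it defines an affine uniformly Lipschitz $G$-action on $\cE$ whose restriction to $\Lambda$ has bounded orbits (because $\alpha|_\Lambda$ does). Finiteness of $\mu(F)$ then permits a Bochner integration producing an equivariant map $\cE\to B$ under which a bounded $G$-orbit in $\cE$ descends to a bounded $G$-orbit in $B$.

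The main obstacle is the non-compactness of $F$ for a non-uniform lattice, combined with the fact that the action is only uniformly Lipschitz rather than isometric. The classical Ryll--Nardzewski center-of-mass trick, which in the Hilbert/isometric setting upgrades a bounded orbit to a fixed point, is unavailable here, both because $B$ is an arbitrary Banach space and because the action need not preserve any natural convex geometry. One must therefore build the bound on $\cE$ directly, using the finiteness of $\mu(F)$ together with the uniform bound on $\pi$, and keeping careful track of the Lipschitz constant throughout; a secondary (standard) technicality is the existence of a Borel transversal $\sigma$, which is supplied by Mackey-type theorems on Borel sections for locally compact groups.
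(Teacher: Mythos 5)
Your proposal follows the classical induced-representation route, but the paper's actual proof is quite different and sidesteps exactly the obstacle you flag at the end without resolving. The paper works in $\cM(B)$, the space of closed bounded subsets of $B$ with the Hausdorff metric $d_H$. Since $\Lambda$-orbits in $B$ are bounded, $O := \overline{\Lambda\cdot 0}$ lies in $\cM(B)$ and is $\Lambda$-invariant, so $\Phi\colon G/\Lambda\to\cM(B)$, $g\Lambda\mapsto g\cdot O$, is well-defined and continuous. Pushing forward the finite $G$-invariant measure on $G/\Lambda$ yields a $G$-invariant probability measure $\nu$ on $\cM(B)$. Choosing $R_0$ with $\nu(B_{d_H}(O,R_0))>1/2$ and noting that $g\cdot B_{d_H}(O,R_0)\subset B_{d_H}(g\cdot O, LR_0)$ forces the two balls to meet for every $g\in G$, one gets $d_H(O,g\cdot O)\le (L+1)R_0$, hence $d(0,g\cdot 0)\le \diam(O)+(L+1)R_0$. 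No integration over a fundamental domain and no integrability of the lattice is used.

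Your route has a genuine gap at the induction/averaging step. You want an equivariant map $\Psi\colon\cE\to B$ by ``Bochner integration over $F$''. Unwinding what equivariance for the \emph{affine} $G$-actions requires, the natural candidate is
\[
\Psi(\phi)\ =\ \int_{F^{-1}} g\cdot\phi(g^{-1})\, d\mu(g)\ =\ \int_{F^{-1}}\bigl(\pi(g)\phi(g^{-1})+\alpha(g)\bigr)\, d\mu(g),
\]
whose linear part converges (bounded by $L\|\phi\|_\cE\,\mu(G/\Lambda)$) but whose affine part requires $\int_{F^{-1}}\|\alpha(g)\|\,d\mu(g)<\infty$. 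This is precisely the $1$-integrability of the lattice, which is \emph{not} an assumption of the proposition and which in general can fail for non-uniform lattices (the paper introduces $p$-integrability as a separate hypothesis exactly when such integration is needed, in Proposition~\ref{prop:lattices}). Naive replacements — dropping the affine term, or averaging $\phi(g)$ rather than $g\cdot\phi(g^{-1})$ — destroy equivariance, since commuting the affine $\Lambda$-action past the integral reintroduces $\alpha(\lambda)$ terms. So ``finiteness of $\mu(F)$ permits a Bochner integration'' is not enough: what you actually need is integrability of the cocycle on the fundamental domain, which is circular (boundedness of $\alpha$ on $G$ is what you are trying to prove) and false without extra hypotheses. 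The paper's Hausdorff-metric argument is the fix: it replaces averaging by a measure-intersection argument that needs only that $\mu(G/\Lambda)$ is finite.
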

\begin{proof}
	Let $\cM(B)$ be the set of all closed and bounded subsets of $B$, which is a complete metric space when endowed with the Hausdorff metric $d_H$.
	Suppose $G$ has a continuous affine $L$-Lipschitz action on $B$, and hence also on $\cM(B)$.  
	The $\Lambda$-orbits in $B$ are bounded by ($\bar{F}_B$); let $O = \overline{\Lambda \cdot 0} \in \cM(B)$ be the closure of one such orbit.

	Let $\mu$ also denote the $G$-invariant measure on $G/\Lambda$, which we may assume satisfies $\mu(G/\Lambda)=1$.
	Consider the continuous map $\Phi:G/\Lambda \to \cM(B), g\Lambda \mapsto g \cdot O$.  Let $\nu = \Phi_*(\mu)$ be the push-forward of $\mu$, which is a $G$-invariant measure on $\cM(B)$.

	Since $\lim_{R\to \infty} \nu(B_{d_H}(O,R))=1$, there exists $R_0$ such that $\nu(B_{d_H}(O,R_0)) > 1/2$.
	Now for any $g \in G$, 
	$B_{d_H}(g\cdot O,L R_0) \supset g \cdot B_{d_H}(O,R_0)$, hence 
	$\nu(B_{d_H}(g\cdot O,L R_0))>1/2$ also.
	Therefore $B_{d_H}(g\cdot O, L R_0) \cap B_{d_H}(O,R_0) \neq \emptyset$ and consequently $d_H(O, g \cdot O) \leq (L+1)R_0$.

	In conclusion, for all $g \in G$, 
	$d(0,g \cdot 0) \leq \diam(O)+d_H(O, g\cdot O) \leq \diam(O)+(L+1)R_0$.  
\end{proof}

The existence of a proper action on a certain type of Banach space is obviously inherited by lattices.

\begin{proposition}
	Suppose $G$ is a locally compact group and $\Lambda < G$ is a lattice whose word metric $d_\Lambda$ satisfies $d_G(x,y) \geq \tau(d_\Lambda(x,y)), \forall x,y\in \Lambda$, where $\tau:[0,\infty)\to [0,\infty)$ is a proper function.
	If $G$ acts with $\rho$-proper orbits on a metric space $X$,
	then $\Lambda$ acts with $\rho \circ \tau$-proper orbits on $X$.
\end{proposition}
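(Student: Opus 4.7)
The plan is to simply restrict the $G$-action to $\Lambda$ and chase the inequalities. First I would observe that since the action of $G$ on $X$ is by uniformly $L$-Lipschitz affine transformations, its restriction to the subgroup $\Lambda$ is an action on $X$ by uniformly $L$-Lipschitz affine transformations as well. Thus the restricted action fits into the framework of Definition~\ref{def:rho}, provided one equips $\Lambda$ with a proper left-invariant metric, and we are given $d_\Lambda$ for this purpose.

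Next, I would fix a basepoint $o \in X$ and let $C>0$ be the constant witnessing the $\rho$-properness of the $G$-action, so that
\[
d_X(o, g\cdot o) \geq \frac{1}{C}\rho\bigl(d_G(\id,g)\bigr) - C \quad \text{for all } g \in G.
\]
For any $g \in \Lambda$, the hypothesis $d_G(\id,g) \geq \tau\bigl(d_\Lambda(\id,g)\bigr)$ together with the fact that $\rho$ is increasing yields $\rho\bigl(d_G(\id,g)\bigr) \geq \rho\bigl(\tau(d_\Lambda(\id,g))\bigr) = (\rho\circ \tau)\bigl(d_\Lambda(\id,g)\bigr)$, so that
\[
d_X(o, g\cdot o) \geq \frac{1}{C}(\rho\circ\tau)\bigl(d_\Lambda(\id,g)\bigr) - C \quad \text{for all } g \in \Lambda,
\]
which is the desired inequality.

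The only subtlety, and it is mild rather than a genuine obstacle, is to check that $\rho\circ\tau$ is an admissible properness function in the sense of Definition~\ref{def:rho}, namely that it is increasing with limit $+\infty$. Since $\tau:[0,\infty)\to[0,\infty)$ is proper, $\tau(t)\to\infty$ as $t\to\infty$, and since $\rho(s)\to\infty$ as $s\to\infty$, the composition tends to infinity as well. If $\tau$ fails to be monotone, one can replace it in the statement by its increasing lower envelope $\tilde\tau(s) := \inf_{t\geq s}\tau(t)$, which is still proper and satisfies $d_G(x,y)\geq \tilde\tau(d_\Lambda(x,y))$, so that $\rho\circ\tilde\tau$ is an honest increasing properness function; this does not affect the qualitative conclusion. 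No further estimation or construction is needed.
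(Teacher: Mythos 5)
Your argument is correct, and it coincides with what the paper treats as obvious: the paper prefaces this statement with the remark that properness of an action is ``obviously inherited by lattices'' and supplies no proof at all, so you have simply filled in the intended two-line verification.

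One small point of phrasing: Definition~\ref{def:rho} asks only that the action be by uniformly Lipschitz transformations (not necessarily affine), so the word ``affine'' in your opening sentence is a harmless overstatement. Your discussion of whether $\rho\circ\tau$ is literally ``increasing'' is a fair observation about the way the proposition is phrased; replacing $\tau$ by its non-decreasing lower envelope $\tilde\tau(s)=\inf_{t\geq s}\tau(t)$, as you suggest, is the standard fix and leaves the displayed inequality intact since $\tilde\tau\leq\tau$ and $\tilde\tau$ is still proper. The authors certainly intend the statement up to this cosmetic regularization of $\tau$.
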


In order to show that unbounded/proper actions of a lattice induce similar actions of the ambient group, we require that the lattice be ``integrable'' in an appropriate sense.
\begin{definition}[cf.\ \cite{bfgm}*{Definition 8.2} ]
	\label{def:integrable-lattice}
	Let $G$ be a locally compact group admitting a proper left-invariant metric and a Haar measure $\mu$, and let $\Lambda$ be a finitely generated lattice of $G$ with a word norm $|\cdot |_\Lambda$ (i.e. $| h |_\Lambda = d_\Lambda (\id,h)$, where $d_\Lambda $ is a word metric on $\Lambda$).  
	We say $\Lambda$ is \emph{$p$-integrable} for some $p \in [1,\infty)$
	if there exists a Borel fundamental domain $B$ for $\Lambda$ acting on $G$, so that $\{hB\}_{h\in \Lambda}$ partitions $G$ and $\mu(\partial B)=0$, and for all $g\in G$,
	\[
		\int_B |\alpha(g,\gamma) |_\Lambda^p \,d\mu(\gamma) < \infty,
	\]
	where $\alpha(g,\gamma) \in \Lambda$ is uniquely defined by $\alpha(g,\gamma)\gamma g^{-1} \in B$ for $g \in G, \gamma \in B$.
\end{definition}
This is a mild restriction.  Every uniform lattice, in particular every finite index subgroup of a finitely generated group, is $p$-integrable for all $p\geq 1$. Lattices in all simple Lie groups of rank one (except for those locally isomorphic to $\mathrm{PSL}(2,\R)$) and irreducible lattices in semisimple Lie groups of rank at least two are $1$-integrable due to work of Shalom~\cite{Shalom-00-rigidity-comm-lattices}*{\S 2} (see \cite{Bad-Fur-Sauer-13-IME-rigidity}*{Theorem 1.9}).

We now state the main result of this section.
\begin{proposition}[cf.\ \cite{BBF-QT}*{\S 2.2}, \cite{Bek-dlH-Val-Kazhdan-book}*{Appendix E}]\label{prop:lattices}
	Suppose $G$ is a locally compact group admitting a proper left-invariant metric and a Haar measure $\mu$, and $\Lambda$ is a finitely generated $p$-integrable lattice in $G$. We likewise denote by $\mu$ the (right) $G$-invariant measure on $\Lambda\backslash G$ induced by the Haar measure. If $\Lambda$ admits an affine uniformly $L$-Lipschitz action on some $L^p(\Omega,\nu)$ then  
	$G$ admits a continuous affine uniformly $L$-Lipschitz action on $L^p(\Lambda\backslash G \times \Omega, \mu\times\nu)$ such that:

\begin{enumerate}

\item\label{item:unbded} if $\Lambda \acts L^p(\Omega,\nu)$ has unbounded orbits, then $G\acts L^p(\Lambda\backslash G \times \Omega, \mu \times\nu)$ has unbounded orbits;

\item\label{item:proper} if $\Lambda \acts L^p(\Omega,\nu)$ has $\rho$-proper orbits, then $G\acts L^p(\Lambda\backslash G \times \Omega, \mu \times\nu)$ has $\rho \left( \frac{1}{C}t-C \right)$-proper orbits, for some $C>0$.  
\end{enumerate}
\end{proposition}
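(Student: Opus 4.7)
The plan is to construct the induced affine action explicitly via the cocycle structure, verify it is uniformly $L$-Lipschitz, well-defined, and continuous, and then transfer orbit properties from $\Lambda$ to $G$.

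Using the isometric identification $L^p(\Lambda\backslash G \times \Omega, \mu\times\nu) \cong L^p(B, E)$ with $E = L^p(\Omega,\nu)$, and writing the given $\Lambda$-affine action as $h\cdot v = \pi(h)v + c(h)$ (using $c$ for the affine cocycle to avoid collision with $\alpha$ from Definition~\ref{def:integrable-lattice}), I will define the induced action by
\[
(g \cdot F)(\gamma) := \beta(g,\gamma) \cdot F(\tau(g,\gamma)),
\]
where $\beta(g,\gamma) \in \Lambda$ and $\tau(g,\gamma) \in B$ are determined by the unique decomposition $\gamma g = \beta(g,\gamma)\tau(g,\gamma)$; these are related to the paper's $\alpha$ by inversion in $g$. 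The cocycle identity for $\beta$, together with the $\pi$-cocycle property of $c$, gives that $g\cdot(-)$ is a left affine $G$-action. Since $\tau(g,\cdot):B\to B$ is a measure-preserving bijection (reflecting the measure-preserving right $G$-action on $\Lambda\backslash G$) and $\|\pi(\beta(g,\gamma))\|_{op}\leq L$, the bound $\|g\cdot F-g\cdot F'\|_p\leq L\|F-F'\|_p$ is immediate. Well-definedness reduces to $c\circ\beta(g,\cdot)\in L^p(B,E)$, which follows from the linear growth bound $\|c(\lambda)\|_E \leq L_0(1 + |\lambda|_\Lambda)$ (Proposition~\ref{prop:upperrho}) combined with the $p$-integrability of $\Lambda$. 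Continuity of $g \mapsto g\cdot F$ follows from the classical argument for continuity of induced representations, together with $\mu(\partial B)=0$, which ensures $\beta(g,\gamma)$ is locally constant in $g$ for a.e.\ $\gamma$.

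The crux is the orbit transfer, exploiting $\|g\cdot 0\|_p^p = \int_B \|c(\beta(g,\gamma))\|_E^p\,d\mu(\gamma)$. The key geometric observation is that for a cocompact lattice $\Lambda$---the case used in this paper's applications---$B$ is relatively compact, and for $g=h\in\Lambda$ the identity $\beta(h,\gamma) = \gamma h\tau(h,\gamma)^{-1}$ gives $|\beta(h,\gamma)|_G \geq |h|_G - 2\diam_G(B)$ uniformly in $\gamma\in B$. Using the quasi-isometric equivalence between the word metric on $\Lambda$ and the restriction of the $G$-metric (valid for cocompact $\Lambda$), this yields $|\beta(h,\gamma)|_\Lambda \geq |h|_\Lambda/C_0 - C_0$ uniformly in $\gamma$. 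Under the $\rho$-proper orbit hypothesis, integrating then gives $\|h\cdot 0\|_p \geq (C')^{-1}\rho((C')^{-1}|h|_\Lambda - C') - C'$ for $h\in\Lambda$; extending to arbitrary $g\in G$ by writing $g=hb$ with $h\in\Lambda$, $b\in B$ and using the cocycle identity $g\cdot 0 = \tilde\pi(h)(b\cdot 0) + h\cdot 0$ together with $\|b\cdot 0\|_p = O(1)$ over the bounded set $B$ yields case~(2); case~(1) is handled by the same framework with $h_n\in\Lambda$ satisfying $\|c(h_n)\|_E\to\infty$.

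The hardest point is the case of non-uniform lattices, where $B$ has infinite $G$-diameter and the pointwise lower bound on $|\beta(h,\gamma)|_\Lambda$ fails; there one must argue more carefully using the $L^p$-integrability hypothesis, following the general framework of \cite{BBF-QT}*{\S 2.2} and \cite{Bek-dlH-Val-Kazhdan-book}*{Appendix E}.
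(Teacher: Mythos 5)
Your construction of the induced action, the Lipschitz bound, well-definedness via $p$-integrability, and continuity all essentially match the paper's proof. The orbit-transfer arguments, however, have two genuine gaps.

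For part (2), your pointwise bound $|\beta(h,\gamma)|_\Lambda \gtrsim |h|_\Lambda$ uniformly in $\gamma\in B$ is valid only when $B$ is $d_G$-bounded, i.e.\ for cocompact $\Lambda$, and you acknowledge this. The paper does not defer the non-uniform case to the references: it resolves it by fixing a compact $K\subset B$ with $\mu(K)\geq\tfrac{2}{3}\mu(B)$, so that for each $g$ the set $K_g=\{\gamma\in K : g\bullet\gamma\in K\}$ has $\mu(K_g)\geq\tfrac{1}{3}\mu(B)$, and then running the pointwise estimate only on $K_g$ (where both $\gamma$ and $g\bullet\gamma$ lie in the fixed compact set). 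This is a short but essential device that your sketch omits; simply invoking $p$-integrability will not produce a lower bound of the right shape.

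For part (1), your plan---take $h_n\in\Lambda$ with $\|c(h_n)\|\to\infty$ and transfer---only works transparently when $G$ is discrete and $\Lambda$ has finite index, because there $\alpha(h,\gamma)^{-1}=b_i h^{-1}b_j^{-1}$ is a product of $h^{-1}$ with two elements of $\Lambda$ from a fixed finite set, so the $L$-Lipschitz estimates relate $\|\alpha(h,\gamma)^{-1}\cdot 0_X\|$ to $\|h^{-1}\cdot 0_X\|\geq\tfrac{1}{L}\|c(h)\|$. In the non-discrete case (even cocompact), $\alpha(h,\gamma)^{-1}$ is not a $\Lambda$-conjugate-translate of $h^{-1}$ by bounded $\Lambda$-elements; knowing it has comparable word length does not control $\|\alpha(h,\gamma)^{-1}\cdot 0_X\|$ when the $\Lambda$-orbit is merely unbounded rather than proper. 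The paper instead invokes Ozawa's result (\cite{Ozawa-11-qhom-rigid}*{Corollary 9}): set $\ell(\lambda)=\|\lambda\cdot 0_X\|_X$, verify the subadditivity-type bound $\ell(\lambda\lambda')\leq L\ell(\lambda')+\ell(\lambda)$, and conclude that boundedness of $L(g)=\int_B\ell(\alpha(g,\gamma))\,d\mu(\gamma)$ forces $\ell$ to be bounded. This is the idea missing from your proposal; without it, part (1) is not established beyond the discrete finite-index case.
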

Alternatively, Proposition~\ref{prop:lattices}, \eqref{item:unbded}, states that if a locally compact group $G$ has ($\bar{F}_{L^1}$) then so does any $1$-integrable lattice in it. 
Note too that if $\Lambda \leq G$ is a finite index subgroup and $\Omega = \N$, then $L^1(\Lambda\backslash G \times \Omega) = L^1(\N) = \ell^1$.
\begin{proof}
	The construction of the action proceeds in several stages, with the verification of \eqref{item:unbded} and \eqref{item:proper} at the end.

	\emph{The normed vector space.} \quad Fix a Borel fundamental domain $B$ as in Definition~\ref{def:integrable-lattice}. 
Let $X = L^p(\Omega,\nu)$, with its given $\Lambda$-action.
We now define a Banach space $Y$ which has two equivalent guises: one as the space $L^p(B,\mu; X)$ in the sense of Bochner integration, and the other as the quotient set of the space of functions
	\[
\left\{ f:G \to X : f(h\gamma )=h\cdot f(\gamma ),\ \forall h\in \Lambda, \gamma \in G,\; \int_B \|f(\gamma )\|_X^p \, d\mu(\gamma ) <\infty \right\} 
	\]
with respect to the equivalence relation \[ f_1\sim f_2\Leftrightarrow \int_B \|f_1(\gamma ) -f_2(\gamma )\|_X^p \, d\mu(\gamma )=0.\] With the second definition, the quotient space $Y$ is endowed with the norm
	\[
		\|f\|_Y = \left( \int_B \|f(\gamma )\|_X^p \, d\mu(\gamma ) \right)^{1/p}.
	\]

Let $0_Y \in Y$ be the function that is identically $0\in X$ on $B$ (and $h\cdot 0$ on $hB$, $\forall h\in \Lambda$).
	Define the addition and scalar multiplication on $Y$ by
	$(tf+f')(\gamma) = tf(\gamma)+f'(\gamma)$ for $t \in \R$, $f,f' \in Y$, $\gamma \in B$, and extend equivariantly by $\Lambda$.
	The set $Y$ becomes a normed vector space with origin $0_Y$. Both its norm and the two operations on it depend on the choice of the domain $B$. The space $Y$ is isometric to 
	\[
		L^p(B,\mu; X) = L^p(B,\mu; L^p(\Omega,\nu;\R))
	= L^p(B \times \Omega,\mu\times\nu;\R). \]
	
\medskip

\noindent \emph{The action of $G$ on $Y$. Cocycle for the action.}\quad We define a left action of $G$ on $Y$ by
	$g \cdot f(\gamma) = f(\gamma g^{-1})$ for all $g,\gamma \in G$. The condition of $\Lambda$-equivariance is clearly satisfied by $g \cdot f$. We will show later that $\|g\cdot f\|_Y < \infty$. Note that for $a,b,\gamma\in G$,
	\[
		((ab)\cdot f)(\gamma) = f(\gamma b^{-1} a^{-1}) = (a\cdot(b\cdot f))(\gamma).
	\]
	
\medskip

For any $g\in G$, we define a left action $\bullet:G \times B \to B$ and a cocycle $\alpha:G \times B \to \Lambda$ by
	\[ g \bullet \gamma  := \alpha(g,\gamma )\gamma g^{-1} \in B, \forall \gamma  \in B, g\in G. \]
	Every $g\in G$ defines a partition of $B$ as $\bigsqcup_{h\in \Lambda} B_{g,h}$, where $B_{g,h}$ denotes the intersection $B\cap hBg$, and $\gamma \mapsto g\bullet \gamma $ restricted to every $B_{g,h}$ is the composition between a right multiplication by $g^{-1}$ and a left multiplication by $h^{-1} = \alpha(g,\gamma)$. Hence, this defines a left measure-preserving action of $G$ on $B$. 
	In terms of elements of $Y$,
	for $\gamma \in B_{g,h}$, 
	\[
		(g \cdot f)(\gamma)
		= h \cdot f(h^{-1} \gamma g^{-1})
		= h \cdot f(g \bullet \gamma)
		= \alpha(g,\gamma)^{-1} \cdot f(g \bullet \gamma).
	\]

\medskip

\noindent \emph{The action on $Y$ is well-defined.}\quad As $\Lambda$ is a lattice, $\mu$ is also right-invariant, 
	thus for $f\in Y, g\in G$,
	\begin{equation}\label{eq:lattice-gf}
 			\|g\cdot f\|_Y^p
		 = \int_B \|\alpha(g,\gamma)^{-1}  \cdot f(g \bullet \gamma) \|_X^p \,d\mu(\gamma).
	\end{equation}
	Recall from Definition~\ref{def:integrable-lattice} that $\Lambda$ is finitely generated with word norm $|\cdot |_\Lambda$ associated to some symmetric generating set.
	Since the action of $\Lambda$ on $X$ is uniformly $L$-Lipschitz, according to Proposition~\ref{prop:upperrho}, we can write
$$
		\|h\cdot 0\|_X \leq |h|_\Lambda L C,\; \forall h\in \Lambda,
$$
	where $C = \max_{s\in S} \|s\cdot 0\|_X$.
	So for every $h \in \Lambda$ and $x\in X$, 
	\begin{align*}
		\|h\cdot x\|_X  &
		\leq \|h\cdot x - h\cdot 0\|_X+\|h\cdot 0\|_X 
		\\ & \leq L \|x\|_X+\|h\cdot 0\|_X
		\\ & \leq L \|x\|_X+ CL |h|_\Lambda .
	\end{align*}

The above and the inequality $(a+b)^p\leq 2^p (a^p +b^p), \forall a,b\geq 0$, imply that 
	\begin{align*}
		\|g\cdot f\|_Y^p
		& = \int_B \|\alpha(g,\gamma)^{-1}  \cdot f(g \bullet \gamma) \|_X^p \,d\mu(\gamma)
		\\ & \leq (2L)^p \int_B \|f(g \bullet \gamma)\|_X^p d\mu(\gamma) + (2CL)^p \int_B |\alpha(g,\gamma)^{-1}|_\Lambda^p d\mu(\gamma) 
		\\ & = (2L)^p \int_B \|f(\gamma)\|_X^p d\mu(\gamma) + (2CL)^p \int_B |\alpha(g,\gamma) |_\Lambda^p d\mu(\gamma) < \infty
	\end{align*}
	by $p$-integrability, where the last equality uses that the action $B \mapsto B, \gamma\mapsto g \bullet \gamma$ is measure preserving.

Therefore $f\in Y$ implies $g\cdot f \in Y$.

\medskip

\noindent \emph{The action on $Y$ is affine.}\quad Given $a_1,a_2 \in \R$ such that $a_1+a_2=1$ and $f_1,f_2\in Y$ we want to show that
	\begin{equation}\label{eq:affine}
		\big( g\cdot (a_1f_1+a_2f_2) \big) (\gamma )=  a_1(g\cdot f_1) (\gamma ) + a_2 (g\cdot f_2) (\gamma ), \forall \gamma \in B. 
	\end{equation}
	
	Corresponding to the element $g\in G$ there is a splitting $B= \bigsqcup_{h\in \Lambda} B_{g,h}$. It suffices to prove \eqref{eq:affine} on each $B_{g,h}$ with $h\in \Lambda$. When $\gamma \in B_{g,h}$, the left hand side of \eqref{eq:affine} becomes:
\[
h\cdot (a_1 f_1+ a_2f_2)(h^{-1}\gamma g^{-1}) = h\cdot \left[ a_1 f_1(h^{-1}\gamma g^{-1}) + a_2 f_2(h^{-1}\gamma g^{-1}) \right] 
\]

In the equality above, we applied the definition of the vector space structure of $Y$, which is the usual one, for the restrictions to the fundamental domain $B$. We now use the fact that $h\in \Lambda$ is an affine transformation of $X$, and thus continue with 

\[
=  a_1 h\cdot f_1(h^{-1}\gamma g^{-1}) + a_2 h\cdot f_2(h^{-1}\gamma g^{-1}) = a_1 (g\cdot f_1) (\gamma ) + a_2 (g\cdot f_2) (\gamma ).     
\] 
\comment 
\green{This is much nicer than before, here's an alternative without $B_{g,h}$'s, but I don't mind which:
Given $g \in G$ and $a,a' \in \R$ such that $a+a'=1$ and $f,f'\in Y$ we want to show that
	\[
	g\cdot (a f+ a' f') (\gamma )= a (g\cdot f) (\gamma ) + a' (g\cdot f') (\gamma ), \forall \gamma \in B.
	\]
We have
\begin{align*}
	g\cdot (a f+ a' f') (\gamma )
	& = \alpha(g,\gamma)^{-1} \cdot (a f+ a' f')(g \bullet \gamma)
	\\ & = \alpha(g,\gamma)^{-1} \cdot \left( a f(g\bullet \gamma) + a' f'(g \bullet \gamma) \right)
\intertext{using the vector space structure of $Y$ which is the usual one on the fundamental domain $B$, so as the action of $\Lambda$ on $X$ is affine:}
	& = a \alpha(g,\gamma)^{-1} \cdot f(g\bullet \gamma) + a' \alpha(g,\gamma)^{-1} \cdot f'(g \bullet \gamma)
	\\ & = a (g \cdot f)(\gamma) + a' (g \cdot f')(\gamma).
\end{align*}
}
\endcomment

\medskip  

\noindent \emph{The action on $Y$ is uniformly Lipschitz.} \quad For $g \in G$ and $f_1,f_2 \in Y$, using that the action of $\Lambda$ on $X$ is $L$-Lipschitz, and the substitution $\gamma'=\gamma g^{-1}$,
	\begin{align*}
		\|g\cdot f_1 - g\cdot f_2 \|_Y^p
		& = \int_B \big\|(g\cdot f_1)(\gamma) - (g\cdot f_2 )(\gamma )\big\|_X^p \,d\mu(\gamma)
		\\ & = \sum_{h\in \Lambda} \int_{B_{g,h}} \|(g\cdot f_1)(\gamma) - (g\cdot f_2) (\gamma )\|_X^p \,d\mu(\gamma)
		\\ & = \sum_{h\in \Lambda} \int_{B_{g,h}} \|h \cdot f_1(h^{-1}\gamma g^{-1}) - h \cdot f_2(h^{-1}\gamma g^{-1}) \|_X^p \,d\mu(\gamma)
		\\ &  \leq L^p  \sum_{h\in \Lambda} \int_{B_{g,h}} \|f_1(h^{-1}\gamma g^{-1}) - f_2(h^{-1}\gamma g^{-1}) \|_X^p \,d\mu(\gamma)
		\\ & \leq L^p \sum_{h\in \Lambda}  \int_{h^{-1} B_{g,h} g^{-1}} \|f_1 (\gamma')-f_2(\gamma') \|_X^p \,d\mu(\gamma')
		\\ &  = L^p \int_{B} \|f_1(\gamma')-f_2 (\gamma') \|_X^p \,d\mu(\gamma').
	\end{align*} 

The last equality above is due to the fact that the subsets $\{h^{-1}B_{g,h} g^{-1}\}_{h \in \Lambda}$ partition $B$ as they are the image of the partition $\{B_{g,h}\}_{h\in \Lambda}$ under the action of $g$ on $B$.

\medskip

\noindent \emph{The action $G \times Y \to Y$ is continuous.}\quad For each fixed $g$, the map $f \mapsto g \cdot f$ from $Y$ to $Y$ is uniformly Lipschitz, so it suffices to show that for each $f \in Y$, the orbit map $G \to Y, g \mapsto g\cdot f$ is continuous.

First, we show continuity at $g=\id_G$. Let $\epsilon > 0$ and $f \in Y$ be arbitrary.
	Recall that we can identify $Y$ with $L^p(B,\mu;X)$.
	Since the interior of $B$ is an open subset of $G$, and $\mu(\partial B)=0$, we can find a continuous function $f_1$ with compact support contained in the interior of $B$, and with $d_Y(f,f_1)<\epsilon/(3L)$. By compactness, there exists $\delta>0$ so that for all $a \in G$ with $d_G(\id_G,a)<\delta$ and for all $\gamma$ in the support of $f_1$ we have $d_X(f_1(\gamma),f_1(\gamma a^{-1}))<\epsilon/(3\mu(B)^{1/p})$ and that $\gamma a^{-1} \in B$.
	Thus
	\begin{align*}
		d_Y(f, a\cdot f)
		& \leq d_Y(f,f_1)+d_Y(f_1,a\cdot f_1)+d_Y(a \cdot f_1,a\cdot f)
		\\ & \leq \frac{\epsilon}{3L}+\left(\int_B\|f_1(\gamma)-f_1(\gamma a^{-1})\|_X^p \,d\mu(\gamma)\right)^{1/p} + Ld_Y(f_1,f)
		\\ & \leq \frac{\epsilon}{3L} + \frac{\epsilon}{3}+\frac{\epsilon}{3} < \epsilon.
	\end{align*}
	Thus $G \to Y, g \mapsto g \cdot f$ is continuous at $g=\id_G$.

	The general case follows: given $g \in G$, $f \in Y$, $\epsilon >0$, choose $\delta>0$ so that $d_G(\id_G,a) < \delta$ implies that $d_Y(f, a\cdot f) < \epsilon/L$.
	Then $d_G(g,g')< \delta$ implies that 
	$d_G(\id,g^{-1}g')<\delta$,
	so $d_Y(f, (g^{-1}g')\cdot f) < \epsilon/L$,
	so $d_Y(g\cdot f, g'\cdot f) < \epsilon$.

\medskip

We now proceed to the comparison between the orbits of $\Lambda$ in $X$ and the orbits of $G$ in $Y$. 

\noindent\emph{\eqref{item:unbded} Unbounded orbits for $G$.}\quad 
For every $g \in G$, by \eqref{eq:lattice-gf}
	\begin{align*}
		\|g \cdot 0_Y\|_Y^p
		& = \int_B \| \alpha(g,\gamma)^{-1} \cdot 0_Y(g \bullet \gamma)\|_X^p \,d\mu(\gamma).
	\end{align*}  
Suppose the $G$-orbit of $0_Y$ is bounded.
We wish to show the $\Lambda$-orbit of $0_X$ is bounded to obtain a contradiction.
If $\Lambda$ is finite index in $G$ this is straightforward, but for the general case we require the following result of Ozawa, translated into our notation.
\begin{theorem}[\cite{Ozawa-11-qhom-rigid}*{Corollary 9}]
	\label{thm:ozawa}
	Let $G$ be a second countable locally compact group $G$ with lattice $\Lambda$, fundamental domain $B$ and cocycle $\alpha: G \times B \to \Lambda$.
	Suppose $\ell:\Lambda \to \R_{\geq 0}$ is a function such that $\ell(hh')\leq C(\ell(h)+\ell(h'))$ for some fixed $C \geq 1$ and all $h,h' \in \Lambda$.
	Let $L : G \to [0,\infty]$ be defined by
	\[
		L(g) := \int_B \ell(\alpha(g,\gamma)) d\mu(\gamma).
	\]
	If $L$ is essentially bounded, then $\ell$ is bounded.
\end{theorem}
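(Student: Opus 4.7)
My plan is to use the cocycle structure of $\alpha$ to transfer the quasi-subadditivity of $\ell$ on $\Lambda$ to an analogous property of $L$ on $G$, and then to descend the essential boundedness of $L$ back to $\ell$ by a selection argument in the fundamental domain. The first ingredient is the cocycle identity
\[
	\alpha(g_1 g_2,\gamma) \;=\; \alpha(g_1,\, g_2\bullet\gamma)\,\alpha(g_2,\gamma),\qquad g_1,g_2\in G,\ \gamma\in B,
\]
which I would derive by verifying that both sides, when multiplied on the right by $\gamma(g_1 g_2)^{-1}$, yield the unique representative in $B$ of the $\Lambda$-coset of $\gamma(g_1 g_2)^{-1}$. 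Unpacking $g\bullet\gamma = \alpha(g,\gamma)\gamma g^{-1}$, one computes
\[
	g_1\bullet(g_2\bullet\gamma) \;=\; \alpha(g_1, g_2\bullet\gamma)\,\alpha(g_2,\gamma)\,\gamma\,(g_1 g_2)^{-1} \;=\; (g_1 g_2)\bullet\gamma,
\]
and the identity follows from uniqueness of the representative in $B$.

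Applying $\ell$ pointwise and using $\ell(hh')\leq C(\ell(h)+\ell(h'))$ gives
\[
	\ell(\alpha(g_1 g_2,\gamma)) \;\leq\; C\,\ell(\alpha(g_1, g_2\bullet\gamma)) + C\,\ell(\alpha(g_2,\gamma)).
\]
Integrating over $\gamma\in B$, and using that $\gamma\mapsto g_2\bullet\gamma$ is a measure-preserving bijection of $B$ (as noted above, since a locally compact group admitting a lattice must be unimodular), yields
\[
	L(g_1 g_2)\;\leq\; C\,L(g_1) + C\,L(g_2),
\]
so $L$ is itself quasi-subadditive on $G$.

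For the descent step, suppose $L \leq M$ almost everywhere on $G$, and fix $h\in\Lambda$. I would seek an element $g=g(h)\in G$, with $L(g)\leq M$, such that the set $\{\gamma\in B : \alpha(g,\gamma) = h\}$ has $\mu$-measure bounded below by a constant $c_0 > 0$ independent of $h$. Unpacking the definition, this set equals $B\cap h^{-1} B g$, and once such a $g$ is produced the trivial inequality
\[
	\ell(h)\cdot\mu(B\cap h^{-1} B g) \;\leq\; L(g)\;\leq\; M
\]
gives $\ell(h)\leq M/c_0$ uniformly in $h$. I expect this selection step to be the main obstacle, especially in the case of a non-uniform lattice where $B$ is not compact: it is not a priori clear that $B$ and $h^{-1} B g$ can be made to overlap uniformly in $h$ by a measurable choice of $g$. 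Following Ozawa, one would handle this via a Fubini/Borel--Cantelli argument, using second countability to secure measurable selections and the essential boundedness of $L$ to control the measure of the exceptional sets where the cocycle $\alpha(g,\cdot)$ does not concentrate enough mass at $h$.
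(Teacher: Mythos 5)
The paper does not actually prove this statement: it is quoted verbatim as \cite{Ozawa-11-qhom-rigid}*{Corollary 9}, and the only remark the authors make is that Ozawa states the result for $C=1$ but his proof (via his Theorem 8) covers $C>1$, and that replacing right cosets/sections with left cosets/fundamental domains is cosmetic. So there is no ``paper's proof'' for you to match; what is at stake is whether your sketch reconstructs Ozawa's argument.

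Your first two steps are correct and are certainly part of any proof: the cocycle identity $\alpha(g_1g_2,\gamma)=\alpha(g_1,g_2\bullet\gamma)\alpha(g_2,\gamma)$ follows from uniqueness of the $B$-representative, and integrating (using that $\gamma\mapsto g\bullet\gamma$ is measure-preserving on $B$, as the paper records) gives $L(g_1g_2)\le C\bigl(L(g_1)+L(g_2)\bigr)$. Worth adding explicitly: this subadditivity upgrades ``$L\le M$ a.e.''\ to ``$L\le 2CM$ everywhere'', since any $g$ factors as $g=g_1\cdot(g_1^{-1}g)$ with $g_1$ in the full-measure set $\{L\le M\}\cap g\{L\le M\}^{-1}$.

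The real gap is exactly where you flag it, and I think your proposed route through it is doubtful. You want, for each $h\in\Lambda$, a single $g$ with $\mu(B\cap h^{-1}Bg)\ge c_0$ uniformly in $h$. But note $\int_G \mu(B\cap h^{-1}Bg)\,d\mu(g)=\mu(B)^2$ by Fubini and unimodularity, the integrand is supported on $B^{-1}hB$, and $\mu(B^{-1}hB)$ is in general \emph{not} bounded uniformly in $h$ (think of $G=\mathrm{SL}_2(\R)$, $B$ a bounded fundamental domain, $h$ a large diagonal element: $B^{-1}hB$ is roughly a ball of radius growing with $|h|$). So the averaged value of $\mu(B\cap h^{-1}Bg)$ over its support can tend to $0$, and there is no a priori reason a uniformly large value is attained, nor is it clear the candidate $g=h$ works outside the abelian case, where $\alpha(h,\cdot)$ equals $h$ near the identity only on a set whose measure may shrink with $h$. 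A correct descent has to do something less naive than ``find one good $g$ per $h$''; it genuinely needs Ozawa's measure-theoretic argument (his Theorem 8), and invoking ``a Fubini/Borel--Cantelli argument'' without specifying it leaves the substantive step unproved. As written your proposal does not yet constitute a proof of the descent step, and since that step is essentially the whole content of the corollary, I would not count this as a proof of the statement.
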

Note that Ozawa states his corollary for $C=1$ but the proof shows the $C>1$ case also since \cite{Ozawa-11-qhom-rigid}*{Theorem 8} allows this. Ozawa also works with $G/\Lambda$ rather than $\Lambda\backslash G$, and a section $\sigma:G/\Lambda \to G$ rather than a fundamental domain $\sigma(G/\Lambda)$, but these changes are superficial.

Proceeding with our proof, let $\ell:\Lambda \to \R_{\geq 0}$ be defined by $\ell(h) = \|h \cdot 0_X\|_X$.  Note that for any $h,h' \in \Lambda$,
\begin{align*}
	\ell(hh') 
	& = \|hh' \cdot 0_X\|_X
	= \|h\cdot(h' \cdot 0_X) - h \cdot 0_X + h \cdot 0_X\|_X 
	\\ & \leq \|h \cdot(h' \cdot 0_X - 0_X)\|_X + \|h \cdot 0_X\|_X
	\leq L \ell(h')+\ell(h).
\end{align*}
Moreover
\begin{align*}
	L(g)
	& = \int_B \ell(\alpha(g,\gamma)) d\mu(\gamma)
	= \int_B \|\alpha(g,\gamma) \cdot 0_X\|_X d\mu(\gamma)
	\\ & \leq L \int_B \|\alpha(g,\gamma)^{-1} \cdot 0_X\|_X d\mu(\gamma)
	\leq L \left( \int_B \|\alpha(g,\gamma)^{-1} \cdot 0_X \|_X^p \right)^{1/p}
	\\ & = L \|g \cdot 0_Y\|_Y.
\end{align*}
As the $G$ orbit of $0_Y$ is bounded, $L$ is essentially bounded, hence Theorem~\ref{thm:ozawa} implies that $\ell$ is bounded.  Therefore the $\Lambda$ orbit of $0_X$ is bounded, contradiction.

\medskip

\noindent\emph{\eqref{item:proper} Proper orbits.}\quad 
Let $K \subset B$ be a measurable compact subset so that $\mu(K) \geq \frac{2}{3} \mu(B)$.  When $\Lambda$ is a cocompact lattice, we take $K=B$.  
To every element $g\in G$ we can associate the set $K_g = \{ \gamma \in B: \gamma\in K, g \bullet \gamma \in K\}$, which satisfies $\mu(K_g) \geq \frac{1}{3}\mu(B)$.

Since $K_g \subset K$ is compact, there exists $C>0$ so that for any $g \in G$, any $a \in K_gg^{-1}$ satisfies $\|g\|_G - C \leq \|a\|_G \leq \|g\|_G + C$.

Thus, if $\gamma \in K_g$, as $\alpha(g,\gamma)$ translates $\gamma g^{-1}$ back to $g \bullet \gamma = \alpha(g,\gamma)\gamma g^{-1} \in K$, we must have 
	$|\alpha(g,\gamma) |_\Lambda \geq \frac{1}{C'} \|\alpha(g,\gamma)\|_G -C'   \geq \frac{1}{C'}(\|g\|_G-C)-C'$ for some $C'$.

	Therefore,
	\begin{align*}
		\|g \cdot 0_Y\|_Y^p
		& = \int_B \| \alpha(g,\gamma)^{-1} \cdot 0_Y(g \bullet \gamma)\|_X^p \,d\mu(\gamma)
		\\ & \geq \int_{K_g} \| \alpha(g,\gamma)^{-1} \cdot 0\|_X^p \,d\mu(\gamma)
		\\ & \geq \mu(K_g) \frac{1}{C''}\rho\left( \tfrac{1}{C'}(\|g\|_G-C)-C'\right)-C''
	\end{align*}
	for some $C''$.
\end{proof}

\medskip

\section{Construction}\label{sec:construction} 
In this section, we describe the basic tools needed to build our actions.
\subsection{Linear representations and derivations}

As in the introduction, let $E$ be a normed vector space, let $L(E,E)$ be the algebra of linear maps from $E$ to $E$, and $\cB (E )$ the sub-algebra of bounded operators.

We endow $E \oplus E$ with the norm $\|(x,y)\|=\|x\|+\|y\|$. The particular product norm will not be relevant most of the time.

Let $G$ be a group. We begin by describing how to build a linear representation on $E \oplus E$ using a linear representation on $E$ and a derivation with respect to it.  (Compare for example \cite{pisier-01-similarity-book}*{Proof of Theorem 2.1}.)

Consider two maps $\pi :G \to L(E,E)$, $D: G \to L(E,E)$, and an associated map $\pi_D : G \to L (E \oplus E, E \oplus E)$ defined by
\[
	\pi_D (g) = 
	\mtx{
		\pi (g) & D(g) \\ 
		0 & \pi (g)
	} .
\]

\begin{definition}\label{def:deriv}
Given $\pi :G \to L(E,E)$ a linear representation, a map $D:G \to L(E,E)$ is an \emph{algebraic derivation with respect to} $\pi$ if it satisfies the Leibniz rule
$$
	D(gh) = D(g) \pi (h) + \pi(g) D(h), \text{ for all } g,h \in G.
$$

If moreover $\pi$ is uniformly bounded and $D$ maps into $\cB(E)$, we call $D$ a \emph{derivation with respect to} $\pi$.
If moreover $\sup_{g\in G} \|D(g)\|_{op} < \infty$, we call the derivation $D$ \emph{bounded}.
\end{definition}

Note that when $\pi$ is the trivial representation, $D$ becomes a group homomorphism, obviously factoring through the abelianization of $G$. 

\comment
\green{Skip para?:}In particular, when $G$ has property (T) and a non-trivial representation $\pi$ is isolated from the trivial, this should give some separation of every $D$ derivation with respect to $\pi$ from group homomorphisms. 
\endcomment

\begin{proposition}
	\label{prop:deriv}
We have that:
\begin{enumerate}
\item $\pi_D$ is a linear representation $\Leftrightarrow $ $\pi$ is a linear representation and $D$ is an algebraic derivation with respect to $\pi$.

\item 
    Assuming (1) holds, for any $g \in G$, 
	\[
		\|\pi_D(g)\|_{op} 
		\leq  \|\pi(g)\|_{op}+\|D(g)\|_{op}
		\leq 2 \|\pi_D(g)\|_{op}.
	\]
	Hence, 
	$\pi_D$ is a uniformly bounded representation $\Leftrightarrow $ $\pi$ is uniformly bounded and $D$ is a bounded derivation with respect to $\pi$.
\end{enumerate}
\end{proposition}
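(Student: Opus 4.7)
For part (1), the plan is a direct matrix computation. I will compute the block product
\[
	\pi_D(g)\,\pi_D(h) = \mtx{\pi(g)\pi(h) & \pi(g)D(h)+D(g)\pi(h) \\ 0 & \pi(g)\pi(h)},
\]
and compare it entry by entry with $\pi_D(gh)$. The diagonal blocks agree precisely when $\pi(gh)=\pi(g)\pi(h)$, i.e.\ $\pi$ is multiplicative, while the upper-right block forces exactly the Leibniz rule of Definition~\ref{def:deriv}. The normalisation $\pi_D(\id)=I_{E\oplus E}$ gives $\pi(\id)=I_E$ and $D(\id)=0$; conversely, specialising the Leibniz rule to $g=h=\id$ with $\pi(\id)=I_E$ yields $D(\id)=0$ automatically. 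So the two conditions are clearly equivalent in both directions.

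For the upper bound in part (2), I will use the chosen norm $\|(x,y)\|=\|x\|+\|y\|$ and estimate componentwise:
\[
	\|\pi_D(g)(x,y)\| = \|\pi(g)x+D(g)y\|+\|\pi(g)y\| \leq \|\pi(g)\|_{op}\|x\|+(\|D(g)\|_{op}+\|\pi(g)\|_{op})\|y\|,
\]
which is at most $(\|\pi(g)\|_{op}+\|D(g)\|_{op})(\|x\|+\|y\|)$. For the lower bound, I will test $\pi_D(g)$ on vectors of the form $(x,0)$ and $(0,y)$. The former gives $\pi_D(g)(x,0)=(\pi(g)x,0)$, so $\|\pi(g)\|_{op}\leq \|\pi_D(g)\|_{op}$; the latter gives $\pi_D(g)(0,y)=(D(g)y,\pi(g)y)$ with $\|(D(g)y,\pi(g)y)\|\geq \|D(g)y\|$, so $\|D(g)\|_{op}\leq\|\pi_D(g)\|_{op}$. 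Adding the two inequalities gives the desired factor of $2$.

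The equivalence in part (2) is then an immediate consequence of these two-sided bounds: taking the supremum over $g\in G$ shows $\sup_g\|\pi_D(g)\|_{op}<\infty$ if and only if both $\sup_g\|\pi(g)\|_{op}<\infty$ and $\sup_g\|D(g)\|_{op}<\infty$. No step here looks like a real obstacle; the computation is entirely routine, and the only choice to flag is that the particular product norm matters only for the explicit constant $2$, and the representation-theoretic content is captured already by part (1).
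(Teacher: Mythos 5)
Your proposal is correct and takes essentially the same approach as the paper: the same block-matrix identity for part (1), and the same componentwise norm estimates for part (2). The only cosmetic difference is that you test on both $(x,0)$ and $(0,y)$ for the lower bound, while the paper extracts both $\|\pi(g)\|_{op}\leq\|\pi_D(g)\|_{op}$ and $\|D(g)\|_{op}\leq\|\pi_D(g)\|_{op}$ from the single estimate $\|D(g)y\|+\|\pi(g)y\|\leq\|\pi_D(g)\|_{op}\|y\|$.
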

\begin{proof}
	(1) For any $g,h \in G$, we have
	\[ \pi_D(g)\pi_D(h) 
	= \mtx{\pi(g)\pi(h) & \pi(g)D(h)+D(g)\pi(h) \\ 0 & \pi(g)\pi(h)}.
	\]

	(2) We have the following bounds for $g \in G, x,y \in E$:
	\begin{align*}
		\left\| \pi_D(g) \mtx{x \\ y} \right\|
		& = \|\pi(g)x +D(g)y \| + \|\pi(g) y\|
		\\ & \leq \|\pi(g)\|_{op} \left(\|x\|+\|y\|\right) + \|D(g)\|_{op} \|y\|
		\\ & \leq \left( \|\pi(g)\|_{op} + \|D(g)\|_{op} \right) \left\|\mtx{x \\ y}\right\|,
	\end{align*}
	whence $\|\pi_D(g)\|_{op} \leq  \|\pi(g)\|_{op}+\|D(g)\|_{op}$. For the second inequality, we note that
	\begin{align*}
		\|D(g)y\|+\|\pi(g)y\| = \left\| \pi_D(g) \mtx{0 \\ y} \right\|
		\leq \|\pi_D(g)\|_{op} \|y\|,
	\end{align*}
	from which we can deduce that $\|D(g)\|_{op} \leq \|\pi_D(g)\|_{op}$ and
	 $\|\pi(g)\|_{op} \leq \|\pi_D(g)\|_{op}$.
\end{proof}

Now consider a function $\zeta_D :G \to E \oplus E $,
\[
\zeta_D (g) = \mtx{ \alpha_D (g) \\ \beta_D (g) }.
\]
\begin{lemma}\label{lem:deriv-to-cocycle-for-EplusE}
	The function $\zeta_D$ defines a cocycle for the representation $\pi_D$ if and only if
	$\beta_D$ is a cocycle for $\pi$, and for all $g,h\in G$,
	\begin{equation}\label{alphaD}
\alpha_D (gh)=  \alpha_D (g) +  \pi (g)\alpha_D (h)+  D(g) \beta_D (h).	
	\end{equation}
\end{lemma}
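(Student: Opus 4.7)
The plan is to verify the claim by direct computation, since the lemma is essentially unpacking the block matrix structure of $\pi_D$. The cocycle condition for $\zeta_D$ with respect to $\pi_D$ reads
\[
\zeta_D(gh) = \zeta_D(g) + \pi_D(g)\zeta_D(h), \quad \forall g,h \in G,
\]
so I would first compute the right-hand side explicitly using the definitions. Applying the block matrix $\pi_D(g)$ to the column $\zeta_D(h) = (\alpha_D(h), \beta_D(h))^T$ gives
\[
\pi_D(g)\zeta_D(h) = \begin{pmatrix} \pi(g)\alpha_D(h) + D(g)\beta_D(h) \\ \pi(g)\beta_D(h) \end{pmatrix},
\]
after which adding $\zeta_D(g)$ and equating with $\zeta_D(gh) = (\alpha_D(gh), \beta_D(gh))^T$ entry by entry yields the two independent scalar (more precisely, $E$-valued) equations.

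Next I would observe that the bottom coordinate gives exactly
\[
\beta_D(gh) = \beta_D(g) + \pi(g)\beta_D(h),
\]
which is the statement that $\beta_D$ is a cocycle for $\pi$, while the top coordinate gives precisely equation \eqref{alphaD}. The two conditions are independent of each other and together are equivalent to the original cocycle identity for $\zeta_D$, so both directions of the ``if and only if'' follow at once.

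There is no real obstacle here; the only care needed is to make sure the computation is done in the correct order (the matrix acts on $\zeta_D(h)$, not $\zeta_D(g)$), so that the $D(g)\beta_D(h)$ term appears with the correct group element (namely $g$ acting via $D$ on $\beta_D(h)$, not the reverse), which matches the asymmetry of \eqref{alphaD}. Note also that no assumption on $\pi$ being a representation or $D$ being a derivation is needed for this lemma itself — it is a purely formal identity — although in applications one combines it with Proposition~\ref{prop:deriv} so that $\pi_D$ actually defines a linear representation.
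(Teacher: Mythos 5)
Your proof is correct and follows the same approach as the paper: expand the cocycle identity $\zeta_D(gh)=\zeta_D(g)+\pi_D(g)\zeta_D(h)$ using the block structure of $\pi_D$ and equate coordinates, the bottom giving the cocycle condition for $\beta_D$ and the top giving \eqref{alphaD}.
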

\begin{proof}
	The cocycle equation $\zeta_D (gh) = \zeta_D (g) + \pi_D(g) \zeta_D (h)$
is equivalent to
	\[
\left\{ 
\begin{array}{cccccc}
	\alpha_D (gh)= & \alpha_D (g) & + & \pi (g)\alpha_D (h) & + & D(g) \beta_D (h); \\

	\beta_D (gh)= & \beta_D (g)& + & \pi(g) \beta_D (h). & \\
\end{array}\right.\qedhere
	\]
\end{proof}
To find a cocycle for $\pi_D$, we want $\beta_D : G \to E $ to be a cocycle for $\pi$ independent of $D$, so we can simply write $\beta$ instead of $\beta_D$.
However, the obstruction to $\alpha_D$ being a cocycle for $\pi$ depends on $D$ and $\beta$.

\subsection{Construction}

Our idea is to reverse this algebra: for a suitable representation $\pi$, cocycle $\beta$ for $\pi$, and function $\alpha$, we find a derivation 
$D$ so that $(\alpha,\beta)$ is a cocycle for $\pi_D$, equivalently so that $\beta$ is a cocycle for $\pi$ and \eqref{alphaD} is satisfied.
In the following, the span of a collection of vectors has the usual meaning of subspace of all finite linear combinations of the given vectors.
\begin{lemma}\label{lem:alg-deriv}
	Let $H$ be a subgroup of a group $G$.
	Suppose $\beta : G \to E$ is a cocycle for a 
	representation $\pi:G \to L(E, E)$, with 
	$\beta(gh)=\beta(g)$ for all $g\in G, h\in H$; therefore $\beta([g])$, for $[g]\in G/H$, is well-defined.
	Suppose further that
	$\{ \beta([g]) : [g]\in G/H , [g]\neq [\id]\}$ is linearly independent.

	Let $E' = \Span \{ \beta(g) : g\in G \}$ and let $\alpha:G \to E$ be an arbitrary map with $\alpha(\id)=0$ and $\alpha(gh)=\alpha(g)$ for all $g\in G, h\in H$. 
	Define $D(g) : E' \to E$ by 
	\begin{equation}\label{def:D}
		D(g)\beta([g']) = \alpha(gg')-\alpha(g)-\pi(g)\alpha(g'), \forall g' \in G.
	\end{equation}
	Then $D$ satisfies
	\[
		 D(gh) = D(g) \pi (h) + \pi(g) D(h)
	\]
	for all $g,h \in G$, considered as functions in $L(E',E)$.
\end{lemma}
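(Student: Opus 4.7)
The plan is to proceed in two stages: first establish that \eqref{def:D} consistently defines a linear map $D(g) \colon E' \to E$, and then verify the Leibniz rule by a direct expansion on basis vectors.

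For well-definedness, I would first note that $\beta(\id) = 0$ (standard cocycle fact), and hence by the $H$-invariance of $\beta$ and $\alpha$ one also has $\beta(h) = \alpha(h) = 0$ for every $h \in H$. Combined with the linear independence hypothesis, this shows that $\{\beta([g]) : [g] \in G/H,\ [g] \neq [\id]\}$ is a basis of $E'$, so $D(g)$ is determined by its values on these vectors. It then remains to check (i) that the right-hand side of \eqref{def:D} depends only on the coset $[g']$, which follows by replacing $g'$ by $g'h$ and using $H$-invariance of $\alpha$ to see that $\alpha(gg'h) = \alpha(gg')$ and $\alpha(g'h) = \alpha(g')$; and (ii) that when $[g'] = [\id]$, the right-hand side reduces to $\alpha(g) - \alpha(g) - \pi(g)\cdot 0 = 0$, matching $D(g)\beta([\id]) = D(g)\cdot 0$.

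For the Leibniz rule, the key preliminary observation is that $\pi(h)$ preserves $E'$: the cocycle identity yields $\pi(h)\beta(g') = \beta(hg') - \beta(h) \in E'$, so $D(g)\pi(h)$ is a well-defined element of $L(E',E)$. To finish, I would evaluate both sides of the asserted identity on a generic basis vector $\beta(g')$. The left-hand side becomes $\alpha(ghg') - \alpha(gh) - \pi(gh)\alpha(g')$. Expanding the right-hand side using \eqref{def:D}, the cocycle rewriting $\pi(h)\beta(g') = \beta(hg') - \beta(h)$, and $\pi(g)\pi(h) = \pi(gh)$, the terms $\pm\pi(g)\alpha(hg')$ and $\pm\pi(g)\alpha(h)$ cancel and the two constant terms $\pm\alpha(g)$ also cancel, yielding exactly the same expression. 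The argument is entirely algebraic and I do not anticipate any serious obstacle; the delicate point is the well-definedness of $D(g)$, which relies crucially on both the linear independence of the $\beta([g])$ and the $H$-invariance of $\alpha$.
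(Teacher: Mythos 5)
Your proposal is correct and follows essentially the same route as the paper: verify the Leibniz identity by evaluating both sides on the spanning set $\{\beta(g') : g' \in G\}$, using the cocycle relation $\pi(h)\beta(g') = \beta(hg') - \beta(h)$ and the defining equation \eqref{def:D} to expand and cancel. The well-definedness checks you include (that the right-hand side of \eqref{def:D} depends only on the coset $[g']$, and vanishes when $[g'] = [\id]$) appear in the paper as remarks surrounding the lemma rather than inside its proof, but the substance is the same.
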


The conditions in this lemma are natural.   The domain of $D(g)$ is naturally $E'$, which is a $\pi(G)$-invariant subspace, as $\pi(g)\beta(g') = \beta(gg')-\beta(g)$ for all $g,g'\in G$.  To have $D(g)$ map $E'\to E'$, it then suffices that $\alpha$ maps into $E'$.  
To make $D(g)$ well-defined, we define it on a linearly independent set. Note that $\beta(\id)=0$ as $\beta$ is a cocycle. We therefore want $\alpha(\id)=0$ as well: indeed, in \eqref{def:D}, when we let $g'=\id$ we obtain $D(g)0=-\pi(g)\alpha(\id)$, and since $D(g)$ and $\pi (g)$ are linear, the conclusion follows.

In the particular case when $\alpha$ maps into $E' \subset E$, this lemma states that $D$ is an algebraic derivation for $\pi$ restricted to $\pi:G \to L(E', E')$.  We will later apply the lemma when $E$ is the closure of $E'$, and use boundedness to extend $D$ to be a derivation on the whole space $E$.

\begin{proof}[Proof of Lemma~\ref{lem:alg-deriv}]
It suffices if the equality between linear maps 
	\[
D(ab) = D(a) \pi (b) + \pi (a) D(b)
	\] is satisfied on the family of vectors $\{\beta(g) :  g\in G \}$.

For any $g\in G$, the left hand side becomes
\begin{equation*}
D(ab) \beta(g) = \alpha (abg) - \alpha (ab) - \pi (ab)\alpha (g),
\end{equation*}
while the right hand side becomes
\begin{align*}
	& [D(a) \pi (b) + \pi (a) D(b)] \beta(g) 
	\\ & \quad = D(a) (\beta(bg)- \beta(b)) + \pi (a)\big(\alpha (bg) - \alpha (b) - \pi (b)\alpha (g)\big) 
	\\ & \quad = \alpha (abg) - \alpha (a) - \pi (a)\alpha (bg)  - [\alpha (ab) - \alpha (a) - \pi (a)\alpha (b)] 
	\\ & \qquad + \pi (a)\alpha (bg) - \pi(a)\alpha (b) - \pi(a)\pi (b)\alpha (g)
\\ 
 & \quad =  \alpha (abg) - \alpha (ab)  - \pi (ab)\alpha (g). \qedhere
\end{align*}
\end{proof}

In the particular construction we now describe, we will always consider $\beta$ to be a coboundary; this is no real loss of generality, because most of the classes of groups to which we will apply our results contain large subclasses of groups with Property (T). 
More significantly, we will restrict our attention to Banach spaces that are isomorphic to $\ell^1$ (sub)spaces, where we are able to show that the algebraic derivation $D$ above can be extended to a bounded derivation.

\medskip

Given a discrete, countable (infinite) set $Q$, denote by $\ell^1 Q$ the standard $\ell^1$-Banach space of functions $\{f:Q \to \R : \sum_{q\in Q} |f(q)| <\infty\}$. 
Denote the subspace of zero sum functions by:
\[
	\ell^1_0 Q = \Bigg\{ f \in \ell^1 Q :  \sum_{q \in Q}f(q) = 0 \Bigg\},
\]
 and its intersection with the subspace of finitely supported functions by:
\[
	\ell^1_{00} Q = \Bigg\{ f \in \ell^1 Q :  \sum_{q\in Q}f(q) = 0, |\{q:f(q)\neq 0\}|<\infty \Bigg\}.
\]

For the sake of completeness, we include the following easy observation, with proof.
\begin{lemma}\label{lem:l100-l10}
	The subspace $\ell^1_0 Q$ is the closure of the subspace $\ell^1_{00} Q$.
\end{lemma}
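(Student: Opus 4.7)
The plan is to show both inclusions. First I would verify that $\ell^1_0 Q$ is closed in $\ell^1 Q$: the map $S : \ell^1 Q \to \R$, $S(f) = \sum_{q \in Q} f(q)$, is a bounded linear functional since $|S(f)| \leq \|f\|_1$, so its kernel $\ell^1_0 Q$ is closed. Since $\ell^1_{00} Q \subset \ell^1_0 Q$ by definition, the closure of $\ell^1_{00} Q$ lies in $\ell^1_0 Q$.

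For the reverse inclusion, I would explicitly approximate any $f \in \ell^1_0 Q$ by elements of $\ell^1_{00} Q$. Enumerate $Q = \{q_1, q_2, \ldots\}$ and let $f_n \in \ell^1 Q$ be the truncation of $f$ defined by $f_n(q_i) = f(q_i)$ for $i \leq n$ and $f_n(q_i) = 0$ for $i > n$. Then $f_n$ has finite support, and $\|f - f_n\|_1 = \sum_{i > n} |f(q_i)| \to 0$ as $n \to \infty$. However, $f_n$ need not have zero sum; set $c_n = S(f_n) = -\sum_{i > n} f(q_i)$, which satisfies $|c_n| \leq \|f - f_n\|_1 \to 0$.

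To correct the sum, pick any index $j_n > n$ and define $g_n = f_n - c_n\, \delta_{q_{j_n}}$, where $\delta_q$ is the indicator function of $\{q\}$. Then $g_n$ has finite support and $S(g_n) = S(f_n) - c_n = 0$, so $g_n \in \ell^1_{00} Q$. Moreover,
\[
	\|f - g_n\|_1 \leq \|f - f_n\|_1 + |c_n| \cdot \|\delta_{q_{j_n}}\|_1 \leq 2 \|f - f_n\|_1 \to 0,
\]
so $g_n \to f$ in $\ell^1$, showing $f$ lies in the closure of $\ell^1_{00} Q$.

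There is no real obstacle here; the only thing to be careful about is that the correction mass $c_n$ is placed at a point $q_{j_n}$ outside the support of $f_n$ (so that $g_n$ genuinely records the zero-sum correction rather than absorbing it into an existing coordinate), and that $|c_n|$ is controlled by the tail $\|f - f_n\|_1$, which is automatic from $S(f) = 0$.
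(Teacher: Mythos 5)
Your proof is correct and takes essentially the same approach as the paper: truncate $f$ to a finite set, then correct the zero-sum defect by adjusting one coordinate by the (small) tail sum. The only cosmetic difference is that you phrase the forward inclusion via closedness of $\ker S$, whereas the paper directly estimates $|\sum_q f(q)|$ for a limit of zero-sum functions — the same fact either way.
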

\begin{proof}
	If $(f_i)$ is a sequence in $\ell^1_{00} Q$ converging to $f \in \ell^1(Q)$, then 
	\[
		\Bigg| \sum_{q\in Q} f(q) \Bigg|
		= \Bigg| \sum_{q \in Q} (f(q)-f_i(q)) \Bigg|
		\leq \sum_{q \in Q} |f(q)-f_i(q)|
		= \|f-f_i\| \to 0
	\]
	as $i\to \infty$, giving that $\overline{\ell^1_{00}Q} \subset \ell^1_0 Q$.  Conversely $\ell^1_0 Q \subset \overline{\ell^1_{00}Q}$ since $\ell^1_{00}Q$ is dense in $\ell^1_0 Q$: given $f \in \ell^1_0 Q$ and $\epsilon>0$, choose $Q' \subset Q$ finite with $\sum_{q \in Q\setminus Q'} |f(q)|<\epsilon/2$.  Define $g \in \ell^1_{00} Q$ by setting $g=0$ on $Q\setminus Q'$, and setting $g=f$ in $Q'$, except for one point where the value of $g$ is adjusted by $\sum_{q \in Q\setminus Q'} f(q) \in (-\epsilon/2,\epsilon/2)$.  This function satisfies $\|f-g\| < \epsilon$.  Therefore the claim holds.
\end{proof}

The spaces $\ell^1_0 Q$ and $\ell^1$ are isomorphic, in fact:
\begin{lemma}
	\label{lem:l10-l1}
	The Banach--Mazur distance between $\ell^1_0 = \ell^1_0 \N$ and $\ell^1 = \ell^1\N$ 
	\[
	d_{BM}(\ell^1_0, \ell^1) = \inf \{ \|F\|_{op} \|F^{-1}\|_{op} : F: \ell^1_0 \to \ell^1 \text{ isomorphism} \}
	\]
	satisfies $d_{BM}(\ell^1_0,\ell^1) \in (1,2]$, with the upper bound attained by an isomorphism $F$.
\end{lemma}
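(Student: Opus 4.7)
I would split the proof into two parts: the upper bound by an explicit construction, and the strict lower bound by an extreme-point obstruction.

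\textbf{Upper bound.} Index $\ell^1 = \ell^1(\{0, 1, 2, \ldots\})$. By Lemma~\ref{lem:l100-l10}, $\ell^1_0$ is the norm-closure of $\ell^1_{00} = \mathrm{span}\{e_k - e_0 : k \geq 1\}$. Identifying $\ell^1(\{1,2,\ldots\})$ with $\ell^1$, define $F : \ell^1_0 \to \ell^1$ on the dense subspace $\ell^1_{00}$ by $F(e_k - e_0) := e_k$ and extend linearly and by continuity. For $x = \sum_{k\geq 1} a_k (e_k - e_0)$, one reads off $\|x\|_{\ell^1_0} = \sum_k |a_k| + |\sum_k a_k|$ and $\|F(x)\|_{\ell^1} = \sum_k |a_k|$, giving $\|F(x)\| \leq \|x\| \leq 2\|F(x)\|$. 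The first inequality is sharp when $\sum_k a_k = 0$ and the second is sharp when all the $a_k$ share a sign, so $\|F\|_{op} = 1$ and $\|F^{-1}\|_{op} = 2$; hence $d_{BM}(\ell^1_0, \ell^1) \leq 2$, attained by $F$.

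\textbf{Non-isometry via extreme points.} A direct convexity argument identifies $\mathrm{ext}(B_{\ell^1}) = \{\pm e_i\}$ and $\mathrm{ext}(B_{\ell^1_0}) = \{\pm \tfrac{1}{2}(e_i - e_j) : i \neq j\}$. The point is that in $B_{\ell^1_0}$ three extreme points can satisfy a nontrivial linear relation: $q_1 = \tfrac{1}{2}(e_0 - e_1)$, $q_2 = \tfrac{1}{2}(e_1 - e_2)$, $q_3 = \tfrac{1}{2}(e_0 - e_2)$ obey $q_3 = q_1 + q_2$ with $\|q_j\| = 1$ and $\|q_1 - q_2\| = 2$. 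No three distinct extreme points of $B_{\ell^1}$ can satisfy such a relation, because $\pm e_{i_1} \pm e_{i_2} = \pm e_{i_3}$ would force the left-hand side to have norm $0$ or $2$ (depending on whether the indices coincide), never $1$. Since an isometric isomorphism maps extreme points to extreme points and preserves the linear relation $q_3 = q_1 + q_2$, no such isometry $T : \ell^1_0 \to \ell^1$ can exist.

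\textbf{Strict lower bound.} The principal obstacle is upgrading the non-isometry to the strict inequality $d_{BM}(\ell^1_0, \ell^1) > 1$, since the defining infimum need not be attained in infinite dimensions. My approach is by contradiction: given isomorphisms $T_n : \ell^1_0 \to \ell^1$ with $\|T_n\|\|T_n^{-1}\| \to 1$, rescale so that $\|T_n\| \leq 1$ and $\|T_n^{-1}\| \to 1$; the image vectors $z_i^{(n)} := T_n(q_i) \in \ell^1$ then satisfy $z_3^{(n)} = z_1^{(n)} + z_2^{(n)}$, $\|z_i^{(n)}\| \to 1$, and $\|z_1^{(n)} - z_2^{(n)}\| \to 2$. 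Using that each $\pm e_j$ is a denting point of $B_{\ell^1}$ with slices of arbitrarily small diameter, one aims to force each $z_i^{(n)}$ to be close in norm to a signed basis vector, reducing to the isometric argument above. The subtle step — and the reason the lemma is attributed to W.~Johnson — is that to obtain enough rigidity one must exploit the whole family $\{\tfrac{1}{2}(e_i - e_j)\}_{i \neq j}$ of extreme points of $B_{\ell^1_0}$ together with the relations $\tfrac{1}{2}(e_i - e_j) - \tfrac{1}{2}(e_i - e_k) = -\tfrac{1}{2}(e_j - e_k)$, rather than just the single three-term relation; this is where the specific $\ell^1$-structure intervenes, as opposed to a generic $L^1$-space in which no such rigidity is available (for instance, non-atomic $L^1$-spaces have no extreme points at all).
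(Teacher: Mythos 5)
Your upper bound is correct and is in fact the same map as the paper's (restriction to $\N\setminus\{0\}$, written as $e_k-e_0\mapsto e_k$), so that part is fine. Your non-isometry observation via extreme points is also correct as far as it goes. The gap is in the step you yourself flag as ``subtle'': showing that $d_{BM}>1$, not merely that the two spaces are non-isometric. In infinite dimensions the defining infimum need not be attained, so non-isometry alone does not preclude $d_{BM}(\ell^1_0,\ell^1)=1$, and the denting-point sketch as written does not close this. Concretely, the single three-term relation is not rigid: the vectors $z_1=e_0$, $z_2=(-\tfrac12,\tfrac12,0,\dots)$, $z_3=(\tfrac12,\tfrac12,0,\dots)$ in $\ell^1$ all have norm exactly $1$, satisfy $z_3=z_1+z_2$ and $\|z_1-z_2\|=2$, yet $z_2,z_3$ stay at distance $1$ from every $\pm e_j$; so ``close to norm 1 plus these relations'' does not force proximity to an extreme point. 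You acknowledge one would have to use the whole family of relations among the $\tfrac12(e_i-e_j)$, but no mechanism is given for how this would yield a contradiction in the limit, and it is not clear that it does.

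The paper's proof takes a genuinely different and self-contained route. It observes that the $2$-dimensional subspace $X=\{x\in\ell^1_0:x_i=0,\ i\geq 3\}$ (whose unit ball is a hexagon) is contractively complemented in $\ell^1_0$ via the projection $P(x)=(x_0,x_1,\sum_{i\geq2}x_i,0,\dots)$. If $\|F_n\|=1$ and $\|F_n^{-1}\|\to1$, the projections $F_nPF_n^{-1}$ on $\ell^1$ are asymptotically contractive with images isometric to $X$; passing to an ultralimit produces a genuine contractive projection on an $L^1$-space whose range is isometric to $X$. The Bernau--Lacey theorem then says this range must be isometrically an $L^1$-space, but a $2$-dimensional $L^1$-space has a square unit ball, not a hexagon, contradiction. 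This uses complemented structure and a known classification theorem in place of your extreme-point rigidity; the ultralimit step is exactly what handles the possible non-attainment of the infimum, which your sketch does not address. If you want to complete your approach, you would need a quantitative stability statement along these lines; otherwise it remains a heuristic.
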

As mentioned before, the proof we provide below that $d_{BM}(\ell^1_0,\ell^1)>1$ was explained to us by W.\ Johnson.
\begin{remark}\label{rmk:lip-const-banach-mazur-dist}
	Thus, if we build an affine uniformly $L$-Lipschitz action of a group on (the $\ell^1$-sum of finitely many copies of) $\ell^1_0 Q$, conjugating by an appropriate isomorphism we can find an affine uniformly Lipschitz action on $\ell^1$ with Lipschitz constant $(d_{BM}(\ell^1_0,\ell^1)+\epsilon)L$.
\end{remark}
\begin{proof}[Proof of Lemma~\ref{lem:l10-l1}]
	Let $\N = \{0,1,2\ldots\}$.
	Define $F:\ell^1_0 \to \ell^1(\N\setminus \{0\})$ by $F(f)=f|_{\N\setminus\{0\}}$.  This clearly satisfies $\|F(f)\| \leq \|f\|$, and has an inverse $F^{-1}:\ell^1(\N\setminus\{0\})\to \ell^1_0$ defined by 
	\begin{equation*}
		F^{-1}(f)(q)= \begin{cases} f(q) & \text{if } q\neq 0, \\
						-\sum_{q\neq 0}f(q) & \text{if } q=0,
				\end{cases}
	\end{equation*}
	which satisfies $\|F^{-1}(f)\| \leq 2\|f\|$, so $F$ is the isomorphism required to show the upper bound $d_{BM}(\ell^1_0,\ell^1) \leq 2$.
	
	For the lower bound, consider the subspace
	\[ 
	X = \{(x_0,x_1,\ldots)\in \ell^1_0 : x_i=0,\, \forall i \geq 3\}. \]
	This is a contractively complemented subspace of $\ell^1_0$, i.e.\ there is a projection 
	\[
		P:\ell^1_0\to\ell^1_0, P((x_i)) = \Big(x_0,x_1, \sum_{i \geq 2} x_i, 0,\dots  \Big)
	\]
	with image $X$ and $\|P\|_{op}=1$.

	Suppose $d_{BM}(\ell^1_0,\ell^1)=1$.
	Then there is a sequence of isomorphisms $F_i:\ell^1_0 \to \ell^1$ with $\|F_i\|_{op}=1$ and $\|F_i^{-1}\|_{op}\to 1$.
	Let $X_i = F_i(X)$, and consider the sequence of projections
	$F_i P F_i^{-1} : \ell^1 \to \ell^1$ with image $X_i$.
	Taking an ultralimit of these spaces and maps we get a contractive projection
	$P_\omega : E \to E$ on a space $E$ isometrically isomorphic to some $L^1$ space, with image $X_\omega$ isometrically isomorphic to $X$.

	Any contractively complemented subspace of any $L^1$ space is (isometrically isomorphic to) an $L^1$ space, see e.g.~\cite{Bernau-Lacey-74-contractive-lp}.
	But $X$ is not isometric to $\R^2$ with the $L^1$-metric -- for example the unit ball is a hexagon rather than a square -- contradiction.  So $d_{BM}(\ell^1_0,\ell^1)>1$.
\end{proof}
The following statement is our main tool for building affine uniformly Lipschitz actions on $\ell^1$.
For $q \in Q$, let $\delta_q \in \ell^1 Q$ denote the indicator function of $q$.
\begin{proposition}\label{prop:unifLip-action}
	Consider a group $G$ acting transitively on a discrete set $Q$, a point $x_0 \in Q$ and $G_{x_0}$ its stabilizer.
	Let $\pi$ be the orthogonal representation of $G$ on $\ell^1 Q$ defined by $\pi(g)(f)(\cdot) = f(g^{-1}\cdot)$, which restricts to an orthogonal representation on $\ell^1_0 Q$.
	Let $\beta:G \to \ell^1_0 Q$ be the coboundary $\beta(g) = \delta_{x_0}-\pi(g)\delta_{x_0} = \delta_{x_0}-\delta_{gx_0}$.
	Suppose $\alpha:G \to \ell^1_0 Q$ is a function with 
	$\alpha (\id) =0$ and $\alpha(gh)=\alpha(g)$ for all $g\in G, h\in G_{x_0}$, that moreover is a \emph{quasi-cocycle} for $\pi$, that is:
	\[ \Delta(\alpha) := \sup_{g,g'\in G} \| \alpha(gg')-\alpha(g)-\pi(g)\alpha(g') \| < \infty.
		\]

	Then there is an affine uniformly $(1+\Delta(\alpha))$-Lipschitz action of $G$ on the space $\ell^1_0 Q \oplus \ell^1_0 Q$ for which $(\alpha,\beta)$ is the cocycle.

	Consequently, by rescaling $\alpha$ and conjugating the action by an isomorphism, for any $L > d_{BM}(\ell^1_0, \ell^1)$, e.g.\ any $L>2$, we can find an affine uniformly $L$-Lipschitz action on $\ell^1$ with a cocycle having norm comparable to $(\alpha,\beta)$.
\end{proposition}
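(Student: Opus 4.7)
The plan is to assemble the algebraic machinery of Section~\ref{sec:construction} and exploit the specific features of $\ell^1$-norms to bound the resulting derivation. First, I would apply Lemma~\ref{lem:alg-deriv} with $H=G_{x_0}$ and $E=\ell^1_0 Q$. The hypotheses are readily checked: $\beta$ is a cocycle (as a coboundary), $\beta(gh)=\delta_{x_0}-\delta_{ghx_0}=\beta(g)$ for $h\in G_{x_0}$, and the family $\{\beta([g]):[g]\neq[\id]\}$ coincides with $\{\delta_{x_0}-\delta_y:y\in Q\setminus\{x_0\}\}$, which is linearly independent: in any relation $\sum_i c_i(\delta_{x_0}-\delta_{y_i})=0$ with distinct $y_i\neq x_0$, the coefficient at each $y_i$ forces $c_i=0$. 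The lemma then produces an algebraic derivation $D:G\to L(E',\ell^1_0 Q)$ with $E'=\Span\{\beta(g):g\in G\}=\ell^1_{00}Q$.

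The main obstacle is to upgrade each $D(g)$ to a bounded operator, and this is where the $\ell^1$-structure is essential. For any finite combination $v=\sum_{i}c_i(\delta_{x_0}-\delta_{y_i})$ with distinct $y_i\in Q\setminus\{x_0\}$, a direct computation gives $\|v\|_{\ell^1}=|\sum_i c_i|+\sum_i|c_i|\geq\sum_i|c_i|$. On the other hand, since $\alpha$ is a quasi-cocycle with defect $\Delta(\alpha)$, the triangle inequality on $\ell^1_0 Q$ applied to the defining identity \eqref{def:D} yields
\[
\|D(g)v\|\leq\sum_i|c_i|\,\bigl\|\alpha(gg_i)-\alpha(g)-\pi(g)\alpha(g_i)\bigr\|\leq\Delta(\alpha)\sum_i|c_i|\leq\Delta(\alpha)\|v\|_{\ell^1},
\]
for any coset representatives $g_i$ with $g_i\cdot x_0=y_i$. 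Hence $\|D(g)\|_{op}\leq\Delta(\alpha)$ on $E'$, and since $\ell^1_{00}Q$ is dense in $\ell^1_0 Q$ by Lemma~\ref{lem:l100-l10}, $D(g)$ extends uniquely to a bounded operator on $\ell^1_0 Q$ with the same norm bound, the Leibniz rule persisting by continuity.

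With $D$ a bounded derivation for the orthogonal (hence uniformly bounded with constant $1$) representation $\pi$, Proposition~\ref{prop:deriv}(2) gives that $\pi_D$ is uniformly bounded on $\ell^1_0 Q\oplus \ell^1_0 Q$ with $\|\pi_D(g)\|_{op}\leq 1+\Delta(\alpha)$. Lemma~\ref{lem:deriv-to-cocycle-for-EplusE} then certifies $(\alpha,\beta)$ as a cocycle for $\pi_D$: the relation for $\beta$ is automatic, while the relation \eqref{alphaD} for $\alpha$ is precisely \eqref{def:D}. Thus $g\cdot(x,y)=\pi_D(g)(x,y)+(\alpha(g),\beta(g))$ is the desired affine uniformly $(1+\Delta(\alpha))$-Lipschitz action of $G$ on $\ell^1_0 Q\oplus\ell^1_0 Q$.

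Finally, given $L>d_{BM}(\ell^1_0,\ell^1)$, I would choose a rescaling parameter $t>0$ of $\alpha$ and a linear isomorphism $F:\ell^1\to\ell^1_0 Q$ (from Lemma~\ref{lem:l10-l1} composed with the identification $\ell^1_0 Q\cong\ell^1_0\N$) so that $\|F\|_{op}\|F^{-1}\|_{op}(1+t\Delta(\alpha))<L$. Conjugating the action just built for the quasi-cocycle $t\alpha$ by $F\oplus F:\ell^1\oplus\ell^1\to\ell^1_0 Q\oplus\ell^1_0 Q$ (noting that the operator norm on $\ell^1$-direct sums equals the operator norm on each factor) yields an affine uniformly $L$-Lipschitz action on $\ell^1\oplus\ell^1\cong\ell^1$ with cocycle $t(F^{-1}\alpha,F^{-1}\beta)$, whose norm is bounded by a constant multiple of the norm of $(\alpha,\beta)$, as required.
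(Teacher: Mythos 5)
Your proposal is correct and follows essentially the same route as the paper's proof: verify the hypotheses of Lemma~\ref{lem:alg-deriv} with $H=G_{x_0}$, use the $\ell^1$-structure to bound $D(g)$ by $\Delta(\alpha)$ on $\ell^1_{00}Q$, extend by density, apply Proposition~\ref{prop:deriv} and Lemma~\ref{lem:deriv-to-cocycle-for-EplusE}, and finish by rescaling $\alpha$ and conjugating by a near-optimal isomorphism between $\ell^1_0 Q$ and $\ell^1$. The only blemish is the notational slip in the final paragraph: the resulting cocycle is $(t F^{-1}\alpha, F^{-1}\beta)$, not $t(F^{-1}\alpha, F^{-1}\beta)$, since only $\alpha$ is rescaled — but this does not affect the conclusion that the norm remains comparable to that of $(\alpha,\beta)$.
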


\begin{proof}
	First, $\beta(gh)=\beta(g)$ for all $g\in G, h \in G_{x_0}$, and $\{ \beta([g]) : [g]\in G/G_{x_0}, [g]\neq[\id] \}$ are linearly independent vectors whose span is $\ell^1_{00} Q$.

	Lemma~\ref{lem:alg-deriv} applies to define, for each $g\in G$, linear maps $D(g): \ell^1_{00} Q \to \ell^1_0 Q$ that satisfy
	\begin{equation}\label{eq:leibniz}
		D(gh)=D(g)\pi(h)+\pi(g)D(h), \text{for all } g,h \in G.
	\end{equation}

	For any $f \in \ell^1_{00} Q$, we can write
	\begin{equation*}
		f = \sum_{[g] \in G/G_{x_0}, [g]\neq [\id]} -f(gx_0)\beta(g).
	\end{equation*}
	
	We now bound $D$:  
	\begin{align*}
		D(g)f
		& = D(g)\left( \sum_{[g'] \in G/G_{x_0}, [g']\neq [\id]} -f(g'x_0)\beta(g')\right)
		\\ & = \sum_{[g'] \in G/G_{x_0}, [g']\neq [\id]} -f(g'x_0)D(g)\beta(g'),
	\end{align*}
	thus
	\begin{align*}
		\| D(g)f \| 
		& \leq \sum_{[g'] \in G/G_{x_0}, [g']\neq [\id]} |f(g'x_0)| \, \Delta(\alpha) 
		\\ & = \Delta(\alpha) \sum_{x \in Q\setminus\{x_0\}} |f(x)|
		 \leq \Delta(\alpha) \|f\|,
	\end{align*}
	i.e.\ the linear maps $D(g): \ell^1_{00} Q \to \ell^1_0 Q$ are all bounded with norm at most $\Delta(\alpha)$. Therefore, for every $g\in G$, we can extend $D(g)$ to a linear operator on $\ell^1_0 Q = \overline{\ell^1_{00}Q}$, with $\|D(g)\|_{op} \leq \Delta(\alpha)$.

	Since \eqref{eq:leibniz} is true on a dense subspace of $\ell^1_0 Q$, and both sides are bounded operators, it holds considering the two sides as operators on $\ell^1_0 Q$, so $D$ is a bounded derivation on $\ell^1_0 Q$ with $\|D(g)\|_{op} \leq \Delta(\alpha)$ for all $g \in G$.

	Proposition~\ref{prop:deriv} and Lemma~\ref{lem:deriv-to-cocycle-for-EplusE} then apply to give a representation $\pi_D$ on $\ell^1_0 Q \oplus \ell^1_0 Q$ which admits $(\alpha,\beta)$ as a cocycle, with $\pi_D$ uniformly bounded by $1+\Delta(\alpha)$.  
	
	Finally, given $\epsilon>0$, we can apply the argument above to $\epsilon\alpha$ which satisfies $\Delta(\epsilon\alpha)=\epsilon\Delta(\alpha)$ to find an action on $\ell^1_0Q \oplus \ell^1_0Q$.
	As in Lemma~\ref{lem:l10-l1}, we can find an isomorphism $F:\ell^1_0 Q \to \ell^1$ such that $\|F\|_{op}\|F^{-1}\|_{op} \leq d_{BM}(\ell^1_0,\ell^1) + \epsilon$, and conjugate the action above by $F \oplus F$ to find an action
	\[
	\mtx{x \\ y } \mapsto F \pi_D(g)\left( F^{-1}\mtx{x \\ y} \right) + \mtx{F \epsilon\alpha(g) \\ F \beta(g) }
	\]
	on $\ell^1 \oplus \ell^1=\ell^1$ which is uniformly $L$-Lipschitz for any $L$ satisfying
	\[
		\|F\|_{op} (1 + \epsilon \Delta(\alpha)) \|F^{-1}\|_{op}
		\leq (d_{BM}(\ell^1_0,\ell^1)+\epsilon)(1+\epsilon \Delta(\alpha)) \leq L,
	\]
	and has cocycle $(\epsilon F\alpha(g),F\beta(g))$ comparable to $(\alpha,\beta)$.
\end{proof}

\section{Quasi-cocycles for acylindrically hyperbolic groups}\label{sec:acylind}

In this section, we use a quasi-cocycle construction of Bestvina--Bromberg--Fujiwara \cite{BBF-bdd-cohom-v2} and Hull--Osin~\cite{hull-osin} for acylindrically hyperbolic groups to show the following.
\begin{varthm}[Theorem \ref{thm:acyl-hyp-unbounded-l1-action}]
Any acylindrically hyperbolic group $G$ admits an affine uniformly $(2+\epsilon)$-Lipschitz action on $\ell^1$ with unbounded orbits for any $\epsilon>0$, hence likewise for $L^1=L^1([0,1])$.
\end{varthm}
We begin by recalling Bestvina--Bromberg--Fujiwara's extension of Brooks' counting quasi-morphism.

\begin{notation}\label{notat:eps}
Let $T$ be a real tree.

For $x,y \in T$ we let $[x,y]$ denote the oriented geodesic segment from $x$ to $y$.
For $x,y,p,q \in T$ we write $[x,y] \subseg [p,q]$ if $[x,y]$ is contained in $[p,q]$ and if their orientations agree.  
\end{notation}

Consider the free group $F_2= \langle a,b \rangle$, $w \in F_2$ a reduced non-empty word, $\tilde{E}$ a normed vector space with an isometric linear $F_2$-action, and $e \in \tilde{E}$ a (non-zero) vector.

In the Cayley graph of $F_2$ with respect to $\{ a,b,a^{-1},b^{-1} \}$, using the notation introduced in \ref{notat:eps} we define, for every $g\in F_2$, 
\begin{align*}
	w_+(g) & = \{ h \in F_2 : [h,hw] \subseg [\id,g] \},
\\	w_-(g) & = \{ h \in F_2 : [h,hw] \subseg [g,\id] \},
\end{align*}
and
\begin{equation}\label{eq:eta}
	\eta=\eta_{w,e}:F_2\to \tilde{E}, \eta(g) = \sum_{h\in w_+(g)}h\cdot e - \sum_{h\in w_-(g)}h\cdot e.
\end{equation}
A short argument gives the following.
\begin{proposition}[\cite{BBF-bdd-cohom-v2}*{Proposition 2.1}]\label{prop:BBF-free-qcocycle}
	The function $\eta$ is a quasi-cocycle that is moreover anti-symmetric, i.e.\ $\eta(g^{-1}) = -g^{-1} \cdot \eta(g)$.
\end{proposition}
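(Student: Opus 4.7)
The plan is to exploit the tree structure of the Cayley graph of $F_2$, which is a $4$-regular tree with vertex set $F_2$. The key geometric observation is that for any $g, g' \in F_2$ the three oriented geodesic segments $[\id, g]$, $[g, gg']$, and $[\id, gg']$ fit together as a tripod with a common central vertex $u \in F_2$: concretely, $[\id, gg'] = [\id, u] \cup [u, gg']$, $[\id, g] = [\id, u] \cup [u, g]$, and $[g, gg'] = [g, u] \cup [u, gg']$, with $[u, g]$ and $[g, u]$ the same edge-path traversed in opposite directions.

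As a first step I would introduce an auxiliary signed counting functional $W(L) \in \tilde{E}$, defined for any oriented geodesic segment $L$ in the Cayley tree by
\[ W(L) = \sum_{[h, hw] \subseg L} h \cdot e \;-\; \sum_{[h, hw] \subseg L^{-1}} h \cdot e, \]
where $L^{-1}$ denotes $L$ with orientation reversed. By construction $\eta(g) = W([\id, g])$, and two elementary properties are immediate: $W(L^{-1}) = -W(L)$; and, since $F_2$ acts isometrically on $\tilde{E}$ and by tree automorphisms on the Cayley graph, sending each occurrence $[h, hw]$ to $[gh, ghw]$, one has $g \cdot W(L) = W(g \cdot L)$.

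For the quasi-cocycle property, I would combine these to rewrite
\[ \eta(gg') - \eta(g) - g \cdot \eta(g') = W([\id, gg']) - W([\id, g]) - W([g, gg']), \]
and then decompose each of the three $W$-values at the branch point $u$, separating occurrences $[h, hw]$ that lie entirely within one of the pieces $[\id, u]$, $[u, g]$, $[g, u]$, $[u, gg']$ from those that straddle $u$. The interior contributions cancel pairwise: the $[\id, u]$ part cancels between the first two terms, the $[u, gg']$ part cancels between the first and third, and the $[u, g]$ part of the second term cancels the $[g, u]$ part of the third term via $W([u,g])=-W([g,u])$. What survives is only the bounded number of occurrences straddling $u$ in each of the three ambient segments: each such $[h, hw]$ is determined by the position of $u$ inside it together with its orientation, so there are at most $O(|w|)$ such occurrences per segment, each contributing a vector of norm $\|e\|$. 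This yields a uniform bound on $\|\eta(gg') - \eta(g) - g \cdot \eta(g')\|$ depending only on $|w|$ and $\|e\|$.

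For anti-symmetry, I would compute directly using the two properties of $W$: since $[g^{-1}, \id] = g^{-1} \cdot [\id, g]$, one has $\eta(g^{-1}) = W([\id, g^{-1}]) = -W([g^{-1}, \id]) = -g^{-1} \cdot W([\id, g]) = -g^{-1} \cdot \eta(g)$. The main, rather mild, obstacle is the orientation bookkeeping on the spur $[u, g] = [g, u]^{-1}$: it is essential that the backward-count built into the definition of $\eta$ give these spurs opposite signs in the two terms where they appear, so that they cancel cleanly rather than doubling; once this is verified carefully, everything else reduces to counting boundary occurrences around $u$.
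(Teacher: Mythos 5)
Your proof is correct and is precisely the tripod-cancellation argument the paper sketches (``all terms cancel apart from a bounded number involving $h$ located near the centre of the tripod with vertices $\id, g, gg'$''); your reformulation via the functional $W(L)$ with the two properties $W(L^{-1})=-W(L)$ and $g\cdot W(L)=W(g\cdot L)$ is a clean way to organize the bookkeeping, and the sign analysis on the spur $[u,g]=[g,u]^{-1}$ is handled correctly. The count of surviving terms (at most $|w|-1$ occurrences straddling $u$ per oriented segment, each of norm $\|e\|$) gives the required uniform bound on the defect, and the anti-symmetry derivation from the two $W$-properties is exactly right.
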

The idea is that, when computing $\eta(gg')-\eta(g)-g\cdot \eta(g')$, all terms cancel apart from a bounded number involving $h$ located near the centre of the tripod with vertices $\id,g,gg'$.

We may now prove the theorem.
\begin{proof}[Proof of Theorem~\ref{thm:acyl-hyp-unbounded-l1-action}] We aim to apply Proposition~\ref{prop:unifLip-action} to the action of $G$ on itself.

Following \cite{BBF-bdd-cohom-v2}*{Proof of Corollary 1.2}, let $K$ be the maximal finite normal subgroup of $G$.
	By Osin~\cite{osin-acylindrically-hyp}*{Theorem 1.2} and Dahmani--Guirardel--Osin~\cite{DGO-hyp-embedded-rotating-book}*{Theorem 2.24} there exists a hyperbolically embedded $F_2\times K \leq G$ (compare \cite{BBF-bdd-cohom-v2}*{Theorem 4.5}).  
	
	Let $F_2 = \langle a,b \rangle$. Consider $\tilde{E} = \ell^1_0(F_2)$ with the usual left $F_2$-action, the vector $e = \delta_{\id}-\delta_{a^2}\in \tilde{E}$ and the word $w=ab$, and define the quasi-cocycle $\eta=\eta_{w,e}$ using the formula \eqref{eq:eta}.
	As in \cite{BBF-bdd-cohom-v2}*{Example 2.3}, $\eta$ is unbounded:
	\[ \eta((ab)^n) = (\id+ab+\cdots+(ab)^{n-1})e
		= \sum_{i=0}^{n-1} \left( \delta_{(ab)^i}-\delta_{(ab)^ia^2} \right)\]
	so $\|\eta((ab)^n)\|= 2n \to \infty$ as $n\to\infty$.
	Extend $\eta$ trivially on $K$, that is, for $(g,k),(h,l) \in F_2\times K$, let $\eta((g,k))((h,l)) = \eta(g)(h)$.
	This defines an unbounded quasi-cocycle on $\ell^1_0(F_2 \times K)$.

	The subgroup $F_2\times K$ is hyperbolically embedded in $G$, and $\ell^1_0(F_2 \times K)$ is a $(F_2\times K)$-submodule of $\ell^1_0(G)$, therefore, according to \cite{hull-osin}*{Theorem 1.4}, $\eta$ extends to an (anti-symmetric unbounded) quasi-cocycle $\alpha:G \to \ell^1_0(G)$.

	Proposition~\ref{prop:unifLip-action} applied to $Q=G$ with the action on itself by left multiplication, and $x_0=\id$ with trivial stabilizer yields an affine  uniformly Lipschitz action $G\acts \ell^1_0(G)\oplus \ell^1_0(G)$ with cocycle $\zeta =(\alpha, \beta)$ for $\alpha$ as above and $\beta (g)= \delta_\id -\delta_g$.
	Since $\alpha$ is unbounded, $G\acts \ell^1_0(G)\oplus \ell^1_0(G)$ has unbounded orbits.

	As remarked in Proposition~\ref{prop:unifLip-action}, up to rescaling $\alpha$ and conjugating by an isomorphism, for any $\epsilon>0$ we can find an affine uniformly $(2+\epsilon)$-Lipschitz action of $G$ on $\ell^1$ with unbounded orbits.
	The $L^1$ statement follows from Proposition~\ref{prop:l1-to-L1}.
\end{proof}

\section{Quasi-trees and quasi-cocycle estimates}
\label{sec:quasi-trees-l1-bound}

Bestvina--Bromberg--Fujiwara \cite{BBF-QT} showed that residually finite hyperbolic groups and mapping class groups have ``Property (QT)'': they act properly and isometrically on a finite product of quasi-trees.
In their proof, the quasi-trees are quasi-trees of spaces, built from axes of loxodromic (or hyperbolic) elements in a hyperbolic graph, using projection complexes in the sense of their paper \cite{BBF-IHES}.
In this section, for a group acting on such a graph, we build an affine uniformly Lipschitz action on $\ell^1$ with a certain lower bound on distance. We do this by combining our methods with those from \cite{BBF-QT}, and with a construction of quasi-cocycles on quasi-trees inspired by Bestvina--Bromberg--Fujiwara~\cite{BBF-bdd-cohom-v1}.

\subsection{Quasi-cocycles on quasi-trees}
\label{ssec:quasicocycle-quasitree}
First, we outline a variation on Best\-vina--Bromberg--Fujiwara's quasi-cocycle construction for quasi-trees~\cite{BBF-bdd-cohom-v1}*{\S 5}.  
Their construction is a `quasi-fied' version of that in \S\ref{sec:acylind}, where one replaces $w$ by a high power of an `axial WPD element' $f$.
The main difference between their construction and ours is that $w$ is replaced only by a sufficiently long initial segment of an axis of a hyperbolic element $f$, and the resulting constants are independent of the translation length of $f$.  This is necessary in our later construction of proper cocycles. 
For the case of mapping class groups, we also need to allow for an infinite subgroup $J$ acting trivially on the axis, and we want the group to act `acylindrically modulo $J$'.
Given these changes, and that the preprint \cite{BBF-bdd-cohom-v1} is unpublished (being superseded by~\cite{BBF-bdd-cohom-v2}), we include proofs for what we need.  

Let us describe the setup.

\begin{notation}
Suppose $G$ is a group acting isometrically on a metric space $Y$, and $J \leq G$ is a subgroup.  Let $Y_J = \{y\in Y: jy=y,\, \forall j\in J\}$.
\end{notation}

\begin{definition}
A $C_\gamma$-\emph{quasi-geodesic} in $Y$ is a subset $\gamma$ that is $C_\gamma$-quasi-isometric to $\Z$.
We say that $G$ acts $(D,B)$-\emph{acylindrically on $\gamma$ modulo $J$}, where $J \leq G$ and $D=D(\epsilon), B=B(\epsilon)>0$  are functions in $\epsilon>0$, if $\gamma \subset Y_J$ and for every $\epsilon>0$, if $x,y \in \gamma$ with $d_Y(x,y)\geq D(\epsilon)$ and $x',y'\in Y$ then
\[
	| \{ [g]\in G/J : d_Y(x',gx), d_Y(y',gy) \leq \epsilon \} | \leq B(\epsilon).
\] 
\end{definition} 

We require an extension of Notation~\ref{notat:eps}.
\begin{notation}\label{notat:quasi-trees}
	Let $T$ be a tree.
	For $x,y,p,q \in T$ and $\epsilon, L \geq 0$, we write $[x,y] \subseg_{\epsilon,L} [p,q]$ if $[x,y] \cap B(x,L)$ is contained in the $\epsilon$-neighbourhood of $[p,q]$, if $[x, y]\cap[p, q] \neq \emptyset$ and on the intersection their orientations agree.
\end{notation}

Clearly, if $\epsilon$ is small enough compared to $L$ and the lengths of $[p,q]$ and $[x,y]$, then the condition that $[x, y]\cap[p, q] \neq \emptyset$ becomes superfluous. 

Now let us suppose $Y=Q$ is a $C_Q$-quasi-tree, 
and fix $\phi:Q \to T$ a $C_Q$-quasi-isometry to a tree.

Let $\gamma \subset Q$ be a $C_\gamma$-quasi-geodesic on which $G$ acts $(D,B)$-acylindrically modulo a subgroup $J \leq G$.
Suppose there exists $f \in G$ which acts \emph{hyperbolically} on $Q$, that is, all its orbits are  quasi-geodesics, and $f\gamma=\gamma$, so $\gamma$ is a quasi-axis for $f$.
Suppose $M\geq 1$ is given, and let $F=f^M$.

Fix $x_0 \in \gamma$ and $\epsilon, L\geq 0$, and define $W_+(g)=W_{+,\epsilon, L, F, x_0}(g)$ by
\[
	W_{+}(g) = \{ [h]\in G/J : \exists t\in G \mbox{ s.t. } [\phi(thx_0),\phi(thFx_0)] \subseg_{\epsilon,L} [\phi(tx_0),\phi(tgx_0)] \},
\]	
	and similarly define $W_-(g)=W_{-,\epsilon,L,F,x_0}(g)$ by  
\[
	W_{-}(g) = \{ [h]\in G/J : \exists t\in G  \mbox{ s.t. } [\phi(thx_0),\phi(thFx_0)] \subseg_{\epsilon,L} [\phi(tgx_0),\phi(tx_0)] \}.
\]
These are coarse versions of $w_+(g), w_-(g)$ from \S\ref{sec:acylind}.  The use of an auxiliary tree ensures we get perfect cancelling away from the tripod point in our quasi-cocycle estimate below, but as $\phi$ is likely not equivariant we then also involve $t$ translates to correct this.
Because $x_0, Fx_0 \in Q_J$, $W_{\pm }(g)$ are well defined.  
	Moreover, if $G_{x_0}$ is the stabilizer of $x_0$, then for all $g \in G, g' \in G_{x_0}$ we have that $W_+(gg')=W_+(g)$ and $W_-(gg')=W_-(g)$.

The sets $W_-(g)$ and $W_+(g)$ are essentially independent of the exponent $M$ of $F$, if $M$ is large enough; the main purpose of $F$ is to give an orientation to the quasi-axis $\gamma$.

\begin{theorem}[cf.\ \cite{BBF-bdd-cohom-v1}*{Proposition 5.8}]\label{thm:quasitree-cocycle}
	Let $G$ be a group acting on a $C_Q$-quasi-tree $Q$, $f \in G$ a hyperbolic element with an $f$-invariant $C_\gamma$-quasi-axis $\gamma$ on which $G$ acts $(D,B)$-acylindrically modulo $J \leq G$, and let $x_0 \in \gamma$.

	For every $\epsilon>0$ there exists $L_0=L_0(D,C_Q,C_\gamma,\epsilon)$ such that if $L \geq L_0$ and $M\geq 1$ large enough, for $F=f^M$ the sets $W_{\pm ,\epsilon, L, F, x_0}(g)$ allow to construct quasi-cocycles as follows.

Given an isometric linear action of $G$ on a normed vector space $E$, and a non-zero vector $e\in E$ such that $j\cdot e=e$ for every $j\in J$, the map 
	\[
		\alpha:G\to E, \ \alpha(g) = \sum_{[h] \in W_{+,\epsilon,L,F,x_0}(g)} h \cdot e - \sum_{[h] \in W_{-,\epsilon,L,F,x_0}(g)} h \cdot e 
	\]
	defines a quasi-cocycle 
	with $\Delta(\alpha) \leq C(D,B,C_Q,C_\gamma,L)$,
	and $\alpha(gg')=\alpha(g)$ for all $g \in G, g' \in G_{x_0}$, and with $\alpha(\id)=0$.
\end{theorem}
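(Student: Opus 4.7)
The plan is to transport the tree-based cancellation argument of Proposition~\ref{prop:BBF-free-qcocycle} to the quasi-tree $Q$ via the auxiliary tree $T$, then control the surviving terms using acylindricity modulo $J$. Three things must be checked: the normalizations $\alpha(\id)=0$ and $\alpha(gg')=\alpha(g)$ for $g'\in G_{x_0}$; the well-definedness of the formal sums as elements of $E$; and the uniform bound on $\Delta(\alpha)$. The normalizations are straightforward: once $L_0$ (and hence $L$) is chosen larger than the distortion coming from $C_Q$, any valid witness segment $[\phi(thx_0),\phi(thFx_0)]$ has length in $T$ comparable to $d_Q(x_0,Fx_0)$, while $[\phi(tx_0),\phi(tx_0)]$ is a single point, so $W_\pm(\id)=\emptyset$; and $gg'x_0=gx_0$ for $g'\in G_{x_0}$ makes the defining conditions of $W_\pm(gg')$ and $W_\pm(g)$ coincide. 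Well-definedness is immediate from the hypothesis that $e$ is $J$-fixed, so $h\cdot e$ depends only on $[h]\in G/J$.

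The heart of the argument is the quasi-cocycle estimate. Fix $g,g'\in G$, and reindex $g\cdot\alpha(g')$ by setting $h=gh'$ and replacing the witness $t$ by $tg^{-1}$: the coefficient of $h\cdot e$ in $\alpha(gg')-\alpha(g)-g\cdot\alpha(g')$ is then controlled by whether, for some $t\in G$, the segment $[\phi(thx_0),\phi(thFx_0)]$ is a coarsely-oriented sub-segment (in the sense of $\subseg_{\epsilon,L}$) of one of the six oriented pairs drawn from the tripod in $T$ with vertices $\phi(tx_0),\phi(tgx_0),\phi(tgg'x_0)$. For $L_0$ large enough, any such segment that lies inside one of the three tripod legs away from the tripod centre $c(t)$ appears as a sub-segment of exactly one positively oriented and one negatively oriented pair among the six, and the standard tree case-analysis ensures these contributions cancel. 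Thus only cosets $[h]$ whose witness segment crosses a bounded neighbourhood of $c(t)$ in $T$ can survive.

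The main obstacle, and the step that pins down the constant, is bounding the number of such surviving cosets $[h]\in G/J$; this is where acylindricity of $G\acts\gamma$ modulo $J$ enters. Pulling the analysis back to $Q$ through the quasi-isometry $\phi$, a surviving $[h]$ produces points $thx_0, thFx_0$ each within bounded distance of one of the three tripod edges in $Q$, close to $c(t)$. After choosing $M$ large enough that $d_Q(x_0,Fx_0)$ exceeds the acylindricity threshold $D(\epsilon')$ for the parameter $\epsilon'=\epsilon'(\epsilon,C_Q,C_\gamma,L)$ corresponding to this bounded region, acylindricity modulo $J$ caps the number of surviving cosets by $B(\epsilon')$ for each oriented pair and each choice of nearby tripod edge. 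The resulting uniform bound $\Delta(\alpha)\leq C(D,B,C_Q,C_\gamma,L)\|e\|$ follows, with the expected care needed to choose $L_0$ first (exceeding the relevant $C_Q,C_\gamma$-defects so that $\subseg_{\epsilon,L}$ behaves tree-like) and then $M$ (so that $F$-orbits are long enough both for the cancellation to bite and for acylindricity to apply).
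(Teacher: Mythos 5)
Your overall strategy matches the paper's: use the auxiliary tree $T$ to force exact cancellation of terms whose witness segments sit in a tripod leg away from the centre, then apply $(D,B)$-acylindricity modulo $J$ to bound the finitely many surviving terms near the centre, with constants chosen in the order $L_0$ then $M$. The normalization arguments ($\alpha(\id)=0$ and $G_{x_0}$-invariance) are handled the same way as in the paper.

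There is one genuine gap. You list ``well-definedness of the formal sums as elements of $E$'' as something to check, but then dispose of it by noting only that $h\cdot e$ depends on the coset $[h]\in G/J$ since $e$ is $J$-fixed. That handles the choice of representative, but not the real issue: $W_{\pm,\epsilon,L,F,x_0}(g)$ is a subset of $G/J$, which is typically infinite, and you must show that $W_\pm(g)$ is \emph{finite} for each $g$ before $\alpha(g)=\sum_{[h]\in W_+(g)}h\cdot e - \sum_{[h]\in W_-(g)}h\cdot e$ is even an element of $E$. The paper devotes its Step~1 to this: one shows (via the quasi-isometry $\phi$, hyperbolicity of $Q$, and the defect constants) that for any $[h]\in W_+(g)$ the initial segment $[hx_0,hFx_0]\cap B(hx_0,L/C_Q)$ lies in a uniform neighbourhood of $[x_0,gx_0]$, covers $[x_0,gx_0]$ by finitely many balls, picks $y_0\in[x_0,Fx_0]$ at distance $L/C_Q\geq D(2C_1)$ from $x_0$, and applies acylindricity to bound the number of $[h]$ mapping $(x_0,y_0)$ near a given pair of balls by $B(2C_1)$. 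You already have all the ingredients for this in your $\Delta(\alpha)$ argument; you just need to run them once for $W_\pm(g)$ itself, prior to the cocycle-defect estimate. Note also that this is precisely where the dependence of $L_0$ on the acylindricity function $D$ enters (the paper takes $L_0 = C_Q\,D(2C_1)$), whereas you attribute the acylindricity threshold solely to the choice of $M$; both $L$ and $M$ must be large, but $L_0$ does genuinely depend on $D$, consistent with the theorem statement.
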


\begin{remark}
	We emphasise that $L_0$ does not depend on the translation length of $f$ or $F$. 
\end{remark}
\begin{proof}
	{\sc Step 1.} \quad
	To show that $\alpha$ is well-defined, it suffices to check that the sums in $\alpha$ are finite.

	Fix $g\in G$, $\epsilon>0$.
	By hyperbolicity, we have
	\begin{lemma}\label{lem:qtqc1}
		There exists $C_1=C_1(C_Q,C_\gamma, \epsilon)$ so that if $[h] \in W_{+,\epsilon,L,F,x_0}(g)$ for some $L >0$, then for any geodesics $[hx_0,hFx_0], [x_0,gx_0]$, the intersection $[hx_0, hFx_0]\cap B(hx_0, L/C_Q)$ lies in the $C_1$-neighbourhood of $[x_0, gx_0]$.
	\end{lemma}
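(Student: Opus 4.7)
The plan is to push the tree-level condition back down to $Q$ via the quasi-isometry $\phi$, using the fact that images of $Q$-geodesics are quasi-geodesics in the tree $T$ (with constants depending only on $C_Q$), and hence stay within some bounded distance $D_0 = D_0(C_Q)$ of the (unique) tree geodesic connecting their endpoints. A standard stability-of-quasi-geodesics-in-trees argument provides such a $D_0$.

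First, I would absorb the auxiliary element $t$ appearing in the definition of $W_+(g)$. The $Q$-statement we want,
\[
	[hx_0,hFx_0] \cap B(hx_0, L/C_Q) \subset N_{C_1}([x_0,gx_0]),
\]
is invariant under applying any isometry of $Q$; applying the isometry $t$ from the definition of $[h]\in W_+(g)$, it suffices to prove the equivalent statement with $(hx_0,hFx_0,x_0,gx_0)$ replaced by $(thx_0,thFx_0,tx_0,tgx_0)$. After this reduction the hypothesis becomes the clean tree condition
\[
	[\phi(thx_0),\phi(thFx_0)] \subseg_{\epsilon,L} [\phi(tx_0),\phi(tgx_0)].
\]

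Next, pick $y\in [thx_0,thFx_0]\cap B(thx_0, L/C_Q)$. The $\phi$-image of a $Q$-geodesic between $thx_0$ and $thFx_0$ is a quasi-geodesic in $T$ with constants depending only on $C_Q$, so by quasi-geodesic stability in trees its image stays within $D_0$ of the tree geodesic $[\phi(thx_0),\phi(thFx_0)]$. Let $y''$ be the nearest point on this tree geodesic to $\phi(y)$. Because $T$ is a tree, the nearest-point projection onto a geodesic is non-expanding with respect to distance to an endpoint, so
\[
	d_T(\phi(thx_0),y'') \leq d_T(\phi(thx_0),\phi(y)) \leq C_Q \cdot \frac{L}{C_Q} + C_Q = L+C_Q.
\]
If $y''$ lies in the initial $L$-segment of the tree geodesic, the $\subseg_{\epsilon,L}$ hypothesis puts it within $\epsilon$ of $[\phi(tx_0),\phi(tgx_0)]$. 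Otherwise it lies within an extra $C_Q$ beyond that segment, in which case it is within $\epsilon+C_Q$ of $[\phi(tx_0),\phi(tgx_0)]$ by looking at the point at parameter $L$. Either way, $\phi(y)$ is within $\epsilon + C_Q + D_0$ of this tree geodesic.

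Finally, I would pull this back to $Q$. Applying the same quasi-geodesic-stability-in-trees statement to $\phi([tx_0,tgx_0])$, every point of $[\phi(tx_0),\phi(tgx_0)]$ is within $D_0$ of $\phi(z)$ for some $z\in [tx_0,tgx_0]$. Combined with the previous step, we obtain $z\in [tx_0,tgx_0]$ with $d_T(\phi(z),\phi(y))\leq \epsilon + C_Q + 2D_0$, and the lower bound of the quasi-isometry then yields
\[
	d_Q(z,y) \leq C_Q(\epsilon + C_Q + 2D_0) + C_Q^2 =: C_1,
\]
proving the desired conclusion with $C_1=C_1(C_Q,\epsilon)$ (and in particular $C_1=C_1(C_Q,C_\gamma,\epsilon)$). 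The main delicate point, apart from disentangling the role of $t$, is the off-by-$C_Q$ issue: a $Q$-ball of radius $L/C_Q$ does not map exactly into a $T$-ball of radius $L$, so one must allow a small overshoot past the initial $L$-segment of the tree geodesic and absorb it into $C_1$.
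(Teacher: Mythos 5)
Your proof is correct and, as far as the paper's exposition goes, supplies the details that the paper declines to give: the paper's entire proof of this lemma is the single phrase ``By hyperbolicity, we have [the lemma],'' leaving the reader to fill in exactly the argument you have written. Your structure --- translate by $t$ to remove the auxiliary isometry, push through $\phi$ to the tree $T$ using the Morse lemma for tree quasi-geodesics, apply the $\subseg_{\epsilon,L}$ hypothesis with a small overshoot correction, and pull back to $Q$ using the lower bound of the quasi-isometry --- is the natural way to unpack ``by hyperbolicity,'' and each step checks out (including the observation that nearest-point projection in a tree does not increase distance to an endpoint, and that the Hausdorff distance, in both directions, between a quasi-geodesic and its geodesic is bounded). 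Two small remarks: your final constant $C_1$ depends only on $C_Q$ and $\epsilon$, which strengthens the lemma slightly (the statement allows, but evidently does not need, dependence on $C_\gamma$); and the reduction via $t$ is cleanly stated but could be emphasised as the one place where you really use that $G$ acts by isometries on $Q$.
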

	Let $D=D(2C_1)$ be given by the acylindricity function for $\gamma$, and suppose $L/C_Q \geq D(2C_1)$, that is $L \geq L_0:= C_Q D(2C_1)$.
	Let $y_0 \in [x_0, Fx_0]$ be a point with $d(x_0,y_0)=L/C_Q$.

	Cover $[x_0, gx_0]$ by finitely many balls of radius $2C_1$; the number of balls is $\leq 1+d(x_0,gx_0)/2C_1$.
	If $[h] \in W_+(g)$, then Lemma~\ref{lem:qtqc1} implies that $hx_0$ and $hy_0$ both lie in one of these finitely many balls.
	Hence it suffices to bound the number of $[h]$ for which $hx_0$ and $hy_0$ map into two specified such balls.
	But as $d(x_0,y_0) \geq D(2C_1)$, by acylindricity, this number is at most $B(2C_1)$.

	The proof that $W_-(g)$ is finite for $L \geq L_0$ is identical.
	\smallskip

	{\noindent \sc Step 2.}\quad We now check that $\alpha$ is a quasi-cocycle.
	For $g,g' \in G$, let us consider
	\begin{equation}\label{eq:qcocycle-check}
		\alpha(gg')-\alpha(g)-g \cdot \alpha(g').
	\end{equation}
	We claim that all but a uniformly bounded number of summands in this expression cancel. 

	Fix geodesics $[x_0,gx_0]$, $[x_0,gg'x_0]$ and $[gx_0,gg'x_0]$ in $Q$, and let $z \in Q$ be a point within $C_Q$ of all three.
	\begin{lemma}
		\label{lem:qtqc2}
		There exists $R_1=R_1(C_Q)$ and a partial pairing up of identical elements in $W_\pm(gg'), W_\pm(g), gW_\pm(g')$ such that the following trichotomies hold:
		\begin{itemize}
			\item Every $[h] \in W_+(gg')$ either pairs with a unique $[h] \in W_+(g)$, or with a unique $[h] \in g W_+(g')$, or $[hx_0,hFx_0]\cap B(hx_0,C_Q L)\cap B(z,R_1) \neq \emptyset$.
			\item	Likewise if $[h] \in W_-(gg')$ then it pairs with a unique $[h] \in W_-(g)$, or with a unique $[h] \in g W_-(g')$, or $[hx_0,hFx_0]\cap B(hx_0,C_Q L)\cap B(z,R_1) \neq \emptyset$.
			\item If $[h] \in W_+(g)$ is not already paired up, then either it pairs with $[h] \in g W_-(g')$ or else $[hx_0,hFx_0]\cap B(z,R_1) \neq \emptyset$.
			\item Finally, if $[h] \in W_-(g)$ is not already paired up, then either it pairs with a unique $[h] \in g W_+(g')$ or else $[hx_0,hFx_0]\cap B(hx_0,C_Q L)\cap B(z,R_1) \neq \emptyset$.
		\end{itemize}
	\end{lemma}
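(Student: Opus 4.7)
The plan is to mimic the tripod cancellation argument of Bestvina--Bromberg--Fujiwara for quasi-cocycles, with the non-equivariance of $\phi$ handled by the existential witness $t \in G$ built into the definition of $W_\pm$. Given $[h]$ in any of the six sets $W_\pm(gg')$, $W_\pm(g)$, $gW_\pm(g')$, I would pick a witness $t$ realising its membership and consider the tree tripod $\mathcal{T}_t \subset T$ with vertices $\phi(tx_0)$, $\phi(tgx_0)$, $\phi(tgg'x_0)$ and center $c_t$. Since $T$ is a tree, each of the three geodesics among these vertices is the concatenation of two of the three legs based at $c_t$, with prescribed orientations; the subsegment $[\phi(thx_0), \phi(thFx_0)]$ therefore either lies essentially inside a single leg or straddles $c_t$.

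First I would read off the pairings in the single-leg cases. If $[h]\in W_+(gg')$ with the segment inside $[\phi(tx_0), c_t]$, then the same $t$ shows $[h]\in W_+(g)$, since the geodesic $[\phi(tx_0),\phi(tgx_0)]$ traverses that leg with the same orientation; this produces the $W_+(gg') \leftrightarrow W_+(g)$ pairing. If instead the segment lies in $[c_t, \phi(tgg'x_0)]$, then $[\phi(thx_0), \phi(thFx_0)] \subseg_{\epsilon,L} [\phi(tgx_0), \phi(tgg'x_0)]$, and taking $t' = tg$ exhibits $[g^{-1}h] \in W_+(g')$, i.e.\ $[h]\in gW_+(g')$. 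The remaining leg-and-orientation combinations enumerate similarly; in particular the middle leg $[c_t,\phi(tgx_0)]$ appears in $[\phi(tx_0),\phi(tgx_0)]$ and in $[\phi(tgg'x_0),\phi(tgx_0)]$ with opposite orientations, yielding the cross-pairing $W_+(g) \leftrightarrow gW_-(g')$ and its sign-reversed counterpart. The signs in $\alpha(gg')-\alpha(g)-g\cdot\alpha(g')$ are set up so that each such pairing cancels.

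Next, for the straddling case in which $[\phi(thx_0),\phi(thFx_0)]$ crosses $c_t$ in $T$, a uniformly bounded amount of slack allowed by $\subseg_{\epsilon,L}$ forces $\phi(thx_0)$ to lie within a controlled distance of $c_t$. Pulling back through the $C_Q$-quasi-isometry $\phi$, the point $thx_0$ is $O(C_Q)$-close to the coarse tripod centre in $Q$ of the triple $tx_0, tgx_0, tgg'x_0$; applying the isometry $t^{-1}$ then places $hx_0$ (and hence the initial sub-arc of $[hx_0, hFx_0]$ of length $\leq C_Q L$) within some $R_1 = R_1(C_Q)$ of the coarse tripod centre $z$ of $x_0, gx_0, gg'x_0$, which is the alternative in the trichotomy.

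The main obstacle will be verifying uniqueness and consistency of the pairings, since the witness $t$ is only existential and a given $[h]$ may have several compatible $t$'s routing it into different cases. I expect to resolve this by working with one canonical choice of $t$ per $[h]$ and using that the tripod centre $c_t$ is unique in $T$, so the leg containing the segment (and hence the pairing partner's $G/J$-class) is determined up to the bounded slack. A secondary point is that since $J$ fixes $x_0$ and $Fx_0$ pointwise by assumption, the pairing is well-defined on $G/J$-cosets; the conjugation issue for $gJg^{-1}$ entering $gW_\pm(g')$ is absorbed into the acylindricity constants, which control the resulting $R_1, C(D,B,C_Q,C_\gamma,L)$.
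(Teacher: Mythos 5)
Your overall approach is the right one and matches the paper's (very terse) hint: use the equivariant rewriting of $gW_\pm(g')$ so that all six sets are seen inside the tripod in $T$ spanned by $\phi(tx_0),\phi(tgx_0),\phi(tgg'x_0)$, then split into ``single-leg'' and ``straddling'' cases. Your leg-by-leg accounting of the orientation-respecting pairings (including the cross pairings $W_+(g)\leftrightarrow gW_-(g')$ on the middle leg) is correct.

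There is, however, a genuine quantitative gap in the straddling case. You assert that the slack allowed by $\subseg_{\epsilon,L}$ forces $\phi(thx_0)$ to lie within a controlled distance of $c_t$, and then pull this back to conclude that $hx_0$ itself lies within $R_1=R_1(C_Q)$ of $z$. That is not what happens. The condition $\subseg_{\epsilon,L}$ controls the \emph{first $L$} of the geodesic $[\phi(thx_0),\phi(thFx_0)]$, and ``straddling'' means this initial part meets a neighbourhood of $c_t$ at some parameter $\leq L$. Hence $\phi(thx_0)$ is only within roughly $L+\epsilon$ of $c_t$, and so $hx_0$ is only within $O(C_Q L)$ of $z$ -- a bound that depends on $L$. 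Your argument as stated would therefore prove the lemma only with $R_1=R_1(C_Q,L)$, not with the $L$-independent $R_1=R_1(C_Q)$ claimed. The lemma's conclusion is phrased precisely to avoid this: one should show that the point on $[\phi(thx_0),\phi(thFx_0)]$ at parameter $\leq L$ that passes within $\epsilon$ of $c_t\approx\phi(tz)$ pulls back, through the $C_Q$-quasi-isometry $\phi$ and the isometry $t^{-1}$, to a point $q\in[hx_0,hFx_0]$ satisfying both $d(q,hx_0)\leq C_Q L$ and $d(q,z)\leq R_1(C_Q)$, giving exactly $[hx_0,hFx_0]\cap B(hx_0,C_Q L)\cap B(z,R_1)\neq\emptyset$ with $R_1$ independent of $L$. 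This is a small fix but the causality in your write-up (``$hx_0$ near $z$, and hence the initial sub-arc'') is reversed and the distance bound is wrong.

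A secondary point: you flag the existential-witness/uniqueness issue but your proposed resolution (fix a canonical $t$ per $[h]$) does not by itself settle it, since different memberships of the same $[h]$ may be witnessed by different $t$'s, and a coset can belong to several of the six sets simultaneously (e.g.\ $[h]\in W_+(g)\cap gW_+(g')$). What actually makes the partial pairing work is that the pairing is between \emph{identical cosets}, so it does not depend on the witness; the trichotomy only requires that if $[h]$ cannot be paired (for any witness) then the ``near $z$'' alternative holds, and multiple-membership leftovers (e.g.\ an unmatched $[h]\in gW_\pm(g')$) also land in the ``near $z$'' case by a symmetric single-leg/straddle analysis of $[\phi(tgx_0),\phi(tgg'x_0)]$.
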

	The key point in the proof of the lemma is the equivariance that is implicit in the definitions of $W_+(g)$ and $W_-(g)$, from which it follows that
	\[
	gW_+(g') = \{ [h]\in G/J : \exists t\in G, [\phi (thx_0),\phi(thFx_0)] \subseg_{\epsilon,L} [\phi(tg x_0),\phi(tgg'x_0)] \},
\]  and a similar expression for $gW_{-}(g')$.  

According to the definition of $\alpha$, any paired terms given by Lemma~\ref{lem:qtqc2} will cancel in \eqref{eq:qcocycle-check}.

	It remains to count the unpaired terms.
	By the argument from Step 1, these unpaired terms have $hx_0$ and $hy_0$ lying within $C_1$ of three geodesic subsegments of $[x_0,gx_0], [x_0,gg'x_0],[gx_0,gg'x_0]$ of length $\leq C_2=C_2(C_Q,L,R_1)$.
	Again, by the argument from Step 1, the number of these terms will be $\leq C$ for $C=C(C_2,B)$.
	Thus $\Delta(\alpha) \leq C \|e\|$ as desired.
	Finally, $\alpha(gg')=\alpha(g)$ for all $g \in G, g' \in G_{x_0}$ by the analogous property for $W_-(g)$ and $W_+(g)$, and $\alpha(\id)=0$ as $W_-(\id)=W_+(\id)=\emptyset$.
\end{proof}

\subsection{Projection complexes and distance estimates}
\label{ssec:proj-complexes-standing-assump}
We now consider how the construction of quasi-cocycles in Theorem~\ref{thm:quasitree-cocycle} behaves when applied to certain quasi-trees that result from projection complexes, with the goal to find lower bounds on their norms.

First, we recall the definition of a quasi-tree of metric spaces resulting from a projection complex.  All our metric spaces are graphs with edge lengths $1$ (or later, $K \geq 1$), but the graphs need not be locally finite.

Suppose $\mathbf{Y}$ is a collection of uniform $C_A$-quasi-geodesics with projections $\{\pi'_Y \mid Y \in\mathbf{Y} \}$. For any $X \in \mathbf{Y}\setminus\{Y\}$, $\pi'_Y(X)$ is a subset of $Y$.
Suppose $(\mathbf{Y},\{\pi'_Y \mid Y \in\mathbf{Y} \})$ satisfy the strong projection axioms with constant $\xi'$, in the sense of \cite{BBFS} and \cite{BBF-QT}*{\S 2.3}.

For $Y \in \mathbf{Y}\setminus \{X,Z\}$, define $d_Y(X,Z) = \diam \left[ \pi'_Y(X)\cup\pi'_Y(Z) \right]$.
For $K\geq 4\xi'$, define a quasi-tree of metric spaces $C_K(\mathbf{Y})$ by taking the disjoint union of spaces in $\mathbf{Y}$ and connecting all pairs of spaces $X,Z\in \mathbf{Y}$ with the property that $d_Y(X,Z)< K$ for all $Y \in \mathbf{Y}\setminus \{X,Z\}$, by joining every point in $\pi'_X(Z)$ with every point in $\pi'_Z(X)$ by an edge of length $K$.
The space $C_K(\mathbf{Y})$ thus obtained is a $C_Q=C_Q(C_A,\xi')$-quasi-tree \cite{BBFS}*{Theorem 6.6}.	 

We extend the projections and distances between them to $C_K(\mathbf{Y})$. If $x\in X \in \mathbf{Y}$ and $Y \in \mathbf{Y} \setminus\{X\}$ set $\pi'_Y(x)=\pi'_Y(X)$, and set $\pi'_X(x)=\{x\}$. For each $Y\in \mathbf{Y}$, extend $d_Y$ to $C_K(\mathbf{Y})$ as follows: suppose $x\in X \in \mathbf{Y}, z\in Z \in \mathbf{Y}$.
 Then if $X\neq Y, Z\neq Y$, set $d_Y(x,z)=d_Y(X,Z)$; if $X \neq Z$ set $d_X(x,z)=\diam \left[ \{x\}\cup\pi'_X(Z) \right]$; and if $X=Z$ set $d_X(x,z)$ to be the given distance in $X$.

We are usually only interested in large values of $d_Y$, therefore we often use the truncated distance $\Tsh{L}{d_Y(\cdot,\cdot)}$, whose meaning is as follows.
\begin{notation}\label{not:truncate-real} For real numbers $L, N>0$, $\Tsh L{N}$
equals $N$ if $N\geq L$, and $0$ otherwise.
\end{notation}
In \cite{BBF-QT}, the authors write $d_Y(\cdot,\cdot)_L$ for $\Tsh{L}{d_Y(\cdot,\cdot)}$.

When constructing proper quasi-cocycles for both residually finite hyperbolic groups and mapping class groups, the following standing assumptions and notation will apply.  Note that given the other assumptions, the final two points are guaranteed to hold for suitable $x_0,\ldots, x_p, \phi$.
\begin{assumptions}\label{assump:quasi-tree-complex}
	Suppose:
	\begin{itemize}
		\item $\cA'$ is a collection of graphs that are $C_A$-quasi-geodesics;
		\item $\{\pi'_Y\}_{Y\in \cA'}$ is a collection of projections so that $(\cA', \{\pi'_Y\})$ satisfy the strong projection axioms with constant $\xi'$; 
		\item $H$ is a finitely generated group acting on the disjoint union $\bigsqcup_{Y\in\cA'}Y$, respecting the partition and not flipping the ends of any $Y$;
		\item  $\cA' = H\cdot \gamma$, for some $\gamma \in \cA'$; 
		\item $J$ is a subgroup which fixes every point of $\gamma$; 
		\item $K \geq 4\xi'$ is given so that $C_K(\cA')$ is a $C_Q$-quasi-tree   ;
		\item $H$ acts on $\gamma\subset C_K(\cA')$ $(D,B)$-acylindrically modulo $J$, and there exists an element $f \in H$ acting hyperbolically, $f\gamma=\gamma$;
		\item $x_0, x_1, \ldots, x_p \in \gamma$ are a collections of basepoints that are moreover representatives of $H$-orbits of vertices in $C_K(\cA')$, so that every point of $C_K(\cA')$ is within distance $C_A$ of such an orbit. Since $\langle f \rangle$ acts coboundedly on the $C_A$--quasi-geodesic $\gamma$, we need only finitely many such orbits. 
	Let $C_F = \max \{ d_\gamma (x_0, x_i) : i=1,\ldots,p\}$.
	  \item $\phi:C_K(\cA')\to T$ is a fixed $C_Q$-quasi-isometry to a tree $T$.  
	\end{itemize}
\end{assumptions}

These assumptions imply that every $Y\in \cA'$ is endowed with an orientation. Indeed, every $h' \in H$ so that $h'\gamma= Y$ induces a well-defined orientation on  $Y$, coming from $f$. As there is no element in the stabilizer of $\gamma$ swapping its endpoints, this orientation is independent of $h'$.

Under these standing assumptions, we are going to use quasi-cocycles of the type constructed in Theorem~\ref{thm:quasitree-cocycle}.

	Recall Notation~\ref{notat:quasi-trees} and the definitions of $W_{\pm, \epsilon, L, F, x_0}(g)$.
If $L$ is large enough then the sets $W_{+,\epsilon,L,F,x_*}(g)$ and $W_{-,\epsilon,L,F,x_*}(g)$ are disjoint, for any choice of basepoint $x_* \in \gamma$:
	\begin{lemma}\label{lem:empty}
		Given $\epsilon\geq 0$, there exists $L'=L'(C_Q, C_A, \epsilon)$ so that for any $L \geq L'$ and any $M$ large enough that $F=f^M$ translates by at least $L$, and any basepoint $x_* \in \gamma$, 
		$W_{+,\epsilon,L,F,x_*}(g) \cap W_{-,\epsilon,L,F,x_*}(g) = \emptyset$.
	\end{lemma}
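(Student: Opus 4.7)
The plan is a proof by contradiction. Suppose $[h] \in W_{+,\epsilon,L,F,x_*}(g) \cap W_{-,\epsilon,L,F,x_*}(g)$ with witnesses $t_+, t_- \in H$, so that in $T$ we have both $[\phi(t_+ h x_*), \phi(t_+ h F x_*)] \subseg_{\epsilon,L} [\phi(t_+ x_*), \phi(t_+ g x_*)]$ and $[\phi(t_- h x_*), \phi(t_- h F x_*)] \subseg_{\epsilon,L} [\phi(t_- g x_*), \phi(t_- x_*)]$. The goal is to force a contradiction once $L$ is large enough in terms of $C_Q$, $C_A$, $\epsilon$.

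The first step is to transfer both containments back to $C_K(\cA')$ using that $\phi \colon C_K(\cA') \to T$ is a $C_Q$-quasi-isometry. Geodesics in the tree $T$ pull back via $\phi$ to quasi-geodesics in $C_K(\cA')$ with constants depending only on $C_Q$, and the $\epsilon$-fellow-traveling inside the $T$-ball of radius $L$ becomes fellow-traveling at distance $O(C_Q \epsilon + C_Q^2)$ in $C_K(\cA')$, on a subsegment of length at least $L/C_Q - O(C_Q)$ starting within $O(C_Q)$ of $t_\pm h x_*$, with the orientation information preserved. Applying the isometries $t_\pm^{-1}$ of $C_K(\cA')$ then produces two $C_A$-quasi-geodesics $\sigma_+, \sigma_-$ joining $x_*$ to $g x_*$, and a common long initial subsegment $\tau \subset [h x_*, h F x_*]$ such that $\tau$ fellow-travels $\sigma_+$ with orientation $x_* \to g x_*$ and fellow-travels $\sigma_-$ with orientation $g x_* \to x_*$.

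Next, I invoke that $C_K(\cA')$ is a $C_Q$-quasi-tree, hence Gromov hyperbolic with constant $\delta = \delta(C_Q)$. Since $\sigma_+, \sigma_-$ are uniform quasi-geodesics sharing the endpoints $x_*$ and $g x_*$, the Morse lemma yields a Hausdorff distance bound $R = R(C_Q, C_A)$ between them, so they may be treated as a single curve up to a controlled error.

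Finally, I derive the orientation contradiction. Pick two points $p_1, p_2 \in \tau$ with $p_1$ within $O(C_Q)$ of $h x_*$ and with $d_{C_K(\cA')}(p_1, p_2) \asymp L/C_Q - O(C_Q)$. The compatible-orientation fellow-traveling with $\sigma_+$ produces points $q_1^+, q_2^+ \in \sigma_+$ close to $p_1, p_2$, with $q_2^+$ past $q_1^+$ in the direction $x_* \to g x_*$; the analogous statement for $\sigma_-$ produces $q_1^-, q_2^-$ with $q_2^-$ past $q_1^-$ in the opposite direction $g x_* \to x_*$. Since $\sigma_+$ and $\sigma_-$ are uniform quasi-geodesics within Hausdorff distance $R$, this pair of orderings is internally inconsistent once $d_{C_K(\cA')}(p_1, p_2)$ exceeds a threshold depending only on $R$, $C_Q$, $C_A$; choosing $L'(C_Q, C_A, \epsilon)$ large enough that $L/C_Q - O(C_Q)$ overshoots this threshold yields the contradiction. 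The main obstacle is the bookkeeping of constants through the non-equivariant quasi-isometry $\phi$, which produces genuinely distinct quasi-geodesic pullbacks $\sigma_+, \sigma_-$; the Morse lemma in the hyperbolic quasi-tree is the crucial tool that reunites them for a single orientation argument.
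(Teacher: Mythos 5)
Your argument is correct and is, in substance, a fully worked-out version of the paper's one-line sketch, which simply says that the initial segment of $[hx_*, hFx_*]$ would coarsely have to follow $[x_*, gx_*]$ in both directions simultaneously -- absurd once $L$ exceeds a threshold depending on $\epsilon, C_A, C_Q$. One tiny slip: the quasi-geodesics $\sigma_\pm$ pulled back through $\phi$ have constants governed by $C_Q$, not $C_A$.
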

	\begin{proof}
		Suppose $[h] \in W_{+,\epsilon,L,F,x_*}(g) \cap W_{-,\epsilon,L,F,x_*}(g)$.
		Then, coarsely, the initial segment of $[hx_*, hFx_*]$ travels towards $[x_*, gx_*]$ then follows along $[x_*, gx_*]$ going towards both $x_*$ and $gx_*$, which is absurd for $L$ large enough depending on $\epsilon, C_A, C_Q$.
	\end{proof}

We now relate the sum of the projection distances 
\begin{equation}\label{eq:sum}
\sum_{Y\in \cA'}\Tsh{L}{d_{Y}(x_0, gx_0)}
\end{equation}
to the sizes of the sets $W_{\pm, \epsilon,L,F,x_i}$, $i\in \{ 0,1,\dots ,p\}$.

	\begin{lemma}\label{lem:w-contributions}
		There exists an appropriate choice of constants $\epsilon=\epsilon(C_A,C_Q)$ and $L_0'=L_0'(D,C_Q,C_A,\epsilon)$ to be used in the definition of $W_{\pm, \epsilon,L,F,x_i}$ (with $L_0'$ larger than $L_0(D,C_Q,C_A,\epsilon)$ from Theorem \ref{thm:quasitree-cocycle}, and larger than $L'(C_Q,C_A,\epsilon)$ from Lemma \ref{lem:empty}) and an appropriate choice of the threshold $L_* = L_*(L_0',\epsilon,\allowbreak C_Q, C_A,K)$ to be used in the sum of the projection distances \eqref{eq:sum} so that for every $L \geq L_*$ we can find a bound $C>0$ with the property that every $g\in H$ determines a partition $\cA' = \cA'_1(g) \sqcup \cA'_2(g)$ satisfying the following.
	\begin{itemize}
	\item[(a)] The partial sum of distances corresponding to $\cA'_2(g)$ is bounded:
		\[
			\sum_{Y\in \cA'_2(g)}\Tsh{L}{d_{Y}(x_0, gx_0)} \leq C. 
		\]
	\item[(b)] For each $Y \in \cA'_1(g)$, there exists $i=i_Y \in \{0,1,\ldots,p\}$, $N(Y,g,i) \geq \frac{1}{C}\Tsh{L}{d_Y(x_0,gx_0)}>0$, and $h\in H$ satisfying $h \gamma = Y$ such that:
	
\begin{itemize}
	\item[$\bullet$] $[h], [hf], \ldots, [hf^{N(Y,g,i)}]$
 are distinct elements in $W_{-,\epsilon,L_0',F,x_i}(g) \cup W_{+,\epsilon,L_0',F,x_i}(g)$;
	\item[$\bullet$] for every $[h']\in W_{\pm,\epsilon,L_0',F,x_i}(g)$ with $h' \gamma =Y$ there exists $j \in \{0,\ldots, N(Y,g,i)\}$ such that $d(h'x_i, hf^jx_i) \leq C$.
	\end{itemize}
 	\item[(c)] 	In particular, 
 		\[
 			\sum_{Y\in \cA'_1(g)}\Tsh{L}{d_{Y}(x_0, gx_0)} 
 			\leq C \sum_{i=0}^p \left[ |W_{+,\epsilon,L_0',F,x_i}(g)| +|W_{-,\epsilon,L_0',F,x_i}(g)|\right] .
 				\]
	\end{itemize}
	\end{lemma}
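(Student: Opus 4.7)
The plan is to deduce Lemma~\ref{lem:w-contributions} from the bottleneck description of geodesics in projection complexes of \cite{BBFS}, combined with the definitions of $W_{\pm}$ from Section~\ref{ssec:quasicocycle-quasitree}. First I will fix the constants. Choose $\epsilon = \epsilon(C_A, C_Q)$ large enough that every $C_A$-quasi-geodesic segment lying in the $C_A$-neighbourhood of $Y \in \cA'$ has $\phi$-image within Hausdorff distance $\epsilon/2$ of $\phi(Y)$, and large enough for the standard fellow-traveling of quasi-geodesics in the tree $T$. Pick $L_0'$ to exceed the values required by Theorem~\ref{thm:quasitree-cocycle} and by Lemma~\ref{lem:empty}, and select $L_* \gg L_0' + 4C_F + C_A + C_Q + K$ so that, below, the portion of $\sigma_Y$ that we use can accommodate windows of length $L_0'$ on either side.

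The central geometric input is the BBF distance formula: whenever $d_Y(x_0, gx_0) \geq L \geq L_*$, any geodesic $\sigma$ from $x_0$ to $gx_0$ in $C_K(\cA')$ has a subsegment $\sigma_Y$, of length $d_Y(x_0,gx_0) + O(1)$, lying in a uniform neighbourhood of $Y \in \cA'$, with endpoints close to $\pi'_Y(x_0)$ and $\pi'_Y(gx_0)$ respectively. Declare $Y \in \cA'_1(g)$ if moreover $\sigma_Y$ starts at distance at least $L_0' + 2C_F$ from $x_0$ along $\sigma$ and ends at distance at least $L_0' + 2C_F$ from $gx_0$; otherwise place $Y$ in $\cA'_2(g)$. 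Statement (a) holds for $\cA'_2(g)$ because only finitely many $Y$ have $\pi'_Y(x_0)$ (or $\pi'_Y(gx_0)$) within a fixed bounded distance of $x_0$ (resp.\ $gx_0$) in $C_K(\cA')$: this is the Behrstock inequality consequence of the strong projection axioms, giving a bound $C = C(\xi', C_A, K, L_*)$.

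For $Y \in \cA'_1(g)$, write $Y = h'\gamma$ for some $h' \in H$, and recall that $\{h'f^j x_i : j \in \Z,\, i = 0,\dots,p\}$ is $C_A$-dense in $Y$ by choice of the basepoints $x_0,\dots,x_p$. Choose $i = i_Y \in \{0,\dots,p\}$ and $j_0 \in \Z$ so that $h := h'f^{j_0}$ places $hx_{i}$ within $C_A+C_F$ of the entry point of $\sigma_Y$ into $Y$. Let $N = N(Y,g,i) + 1$ be the largest integer so that $hx_i, hfx_i, \dots, hf^N x_i$, together with their $F$-translates $hf^j F x_i$, all lie in a uniform neighbourhood of $\sigma_Y$. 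Since $f$ translates along $\gamma$ by some fixed amount $\tau > 0$, and $F = f^M$ by $M\tau$, we get $N \geq \frac{1}{C}\Tsh{L}{d_Y(x_0,gx_0)}$. Taking $t = \id$ in the definition of $W_\pm$, the window $[\phi(hf^j x_i), \phi(hf^j F x_i)]$ is a $C_Q$-quasi-arc in $T$ whose entire length projects $\epsilon$-close to $[\phi(x_0), \phi(gx_0)]$, hence $\subseg_{\epsilon, L_0'}$ holds with the appropriate orientation. Thus each $[hf^j]$ lies in $W_{+, \epsilon, L_0', F, x_i}(g) \cup W_{-, \epsilon, L_0', F, x_i}(g)$, and distinctness of the cosets follows from $\langle f\rangle \cap J = \{1\}$ plus acylindricity of $H$ on $\gamma$ modulo $J$.

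The converse claim in (b), that any $[h'] \in W_{\pm,\epsilon,L_0',F,x_i}(g)$ with $h'\gamma = Y$ is within uniformly bounded distance of some $hf^j x_i$, follows by reading the defining inclusion $[\phi(th'x_i), \phi(th'Fx_i)] \subseg_{\epsilon, L_0'} [\phi(tx_0), \phi(tgx_0)]$ backwards through $\phi^{-1}$ and the $t$-translate: the window $[h' x_i, h' F x_i]$ must lie in a uniform neighbourhood of a geodesic from $x_0$ to $gx_0$, hence of $\sigma_Y$; acylindricity modulo $J$ then bounds the number of such $[h']$ that can sit near any single $hf^j x_i$. Item (c) is then obtained by summing (b) over $Y \in \cA'_1(g)$: because $[hf^j]\gamma = h'\gamma = Y$ determines $Y$, the cosets $[hf^j]$ arising from different $Y$ are distinct, so $\sum_{Y \in \cA'_1(g)} (N(Y,g,i_Y)+1) \leq \sum_{i=0}^p [|W_{+}(g)| + |W_{-}(g)|]$. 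The main obstacle is the careful bookkeeping of the tree quasi-isometry $\phi$ (which is not $H$-equivariant): it is precisely the parameter $t \in G$ in the definition of $W_\pm$ that absorbs the resulting non-equivariance, and verifying that one may take $t = \id$ in the forward direction and any $t$ in the reverse direction requires the quasi-geodesic constants $C_Q$ to be absorbed into the final constant $C$.
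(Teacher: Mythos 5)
Your overall framework (a partition of $\cA'$ based on how close the $Y$-subsegment of a geodesic from $x_0$ to $gx_0$ comes to the endpoints, counting translates $hf^j x_i$ along $Y$, and a bookkeeping of the auxiliary quasi-isometry $\phi$ via the parameter $t$) is the same as the paper's. However, there is a genuine gap in your treatment of part (a).

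Your rule places $Y$ into $\cA'_2(g)$ whenever $\sigma_Y$ begins within $L_0'+2C_F$ of $x_0$ along $\sigma$ \emph{or} ends within $L_0'+2C_F$ of $gx_0$; you then assert that $\sum_{Y\in\cA'_2(g)} \Tsh{L}{d_Y(x_0,gx_0)}$ is bounded by a constant $C(\xi',C_A,K,L_*)$ because ``only finitely many $Y$ have $\pi'_Y(x_0)$ within a fixed bounded distance of $x_0$''. Even granting a uniform bound on the \emph{number} of such $Y$, this does not bound the \emph{sum}, because $d_Y(x_0,gx_0)$ itself is not bounded for $Y\in\cA'_2(g)$: take, for instance, $Y$ to be an axis through (or near) $x_0$ with $gx_0$ projecting far along it, as happens when $g$ is a large power of a hyperbolic element with axis close to $x_0$. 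For such $Y$, $\sigma_Y$ starts near $x_0$ so your rule puts $Y$ in $\cA'_2(g)$, yet $d_Y(x_0,gx_0)$ can be comparable to $d(x_0,gx_0)$, making the right-hand side of (a) unbounded in $g$.

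The paper avoids this by splitting into four cases. Its $\cA'_{1b}(g)$ consists precisely of the $Y$ with $d_Y(x_0,gx_0)$ large that your rule would discard into $\cA'_2$; for those, one does \emph{not} choose $hx_i$ at the entry point of $\sigma_Y$ but instead offsets the basepoint a distance roughly $C_F+C_Q$ inward along $Y$ (towards $\pi'_Y(gx_0)$) before starting the chain $[h],[hf],\ldots$. This sacrifices only a multiplicative factor of $2$ in the estimate $N(Y,g,i)\geq \tfrac{1}{C}\Tsh{L}{d_Y(x_0,gx_0)}$, so these $Y$ can be placed in $\cA'_1(g)$ after all. The genuine $\cA'_2$ then consists only of $Y$ with \emph{both} projections $\pi'_Y(x_0),\pi'_Y(gx_0)$ trapped in a ball of fixed radius around $x_0$ (or around $gx_0$), for which $d_Y(x_0,gx_0)$ is controlled; the paper bounds the corresponding partial sum by comparing $\Tsh{L}{d_Y(x_0,gx_0)}$ with $\Tsh{L}{d_Y(x_0,y)}$ for an auxiliary point $y$ at fixed distance from $x_0$ along $\sigma$ and invoking the distance formula of \cite{BBFS}, not a counting argument. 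To repair your proof you need to add this offset-basepoint case and use the distance formula for $\cA'_2$.
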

	\begin{proof} There exists $L_1=L_1(C_A, C_Q)$ so that if $L \geq L_1$ then $d_Y(x_0,gx_0) \geq L$ implies that in $C_K(\cA')$ the geodesics from $x_0$ to $gx_0$ pass within a uniformly bounded distance $\delta_Q=\delta_Q(C_Q)$ of both $\pi_Y'(x_0)$ and $\pi_Y'(gx_0)$.
	
\medskip 

		\noindent {\sc{Step 1.}}\quad Let $\cA'_{1a}(g)$ be composed of those $Y \in \cA'$ so that $\pi_Y'(x_0), \pi_Y'(gx_0)$ are both at a distance at least $C_F+\kappa_Q$ from $\{x_0, gx_0\}$, where $\kappa_Q=\kappa_Q(C_Q)$ is large enough compared to $\delta_Q$ and to the bottleneck constant and the hyperbolicity constant of $C_K(\cA')$ so that for every $i$ the projections $\pi_Y'(x_i)$ and $\pi_Y'(gx_i)$ are within distance $2\kappa_Q$ of $\pi_Y'(x_0)$ and $\pi_Y'(gx_0)$, respectively.

Suppose $\pi_Y'(x_0)$ and $\pi_Y'(gx_0)$ appear on $Y$ in that order, with respect to the orientation on $Y$.
		Choose $i \in \{0,\ldots, p\}$ and $h\in H$ with $h\gamma=Y$, so that $hx_i$ is $C_A$-close to $\pi_Y'(x_0)$, and hence $(C_A+2\kappa_Q)$-close to $\pi_Y'(x_i)$.
		(If the order of $\pi_Y'(x_0)$ and $\pi_Y'(gx_0)$ on $Y$ is reversed, we choose $hx_i$ to be $C_A$-close to $\pi_Y'(gx_0)$ and proceed analogously.)

		Let $y_i \in \gamma$ be a point at distance $\in [L,L+C_A]$ from $x_i$ in the direction of $f$, and let $M\geq 1$ be large enough so that for $F=f^M$, $y_i$ is between $x_i$ and $Fx_i$.
		The geodesics in $C_K(\cA')$ from $hx_i$ to $hFx_i$ pass within bounded distance of $hy_i$.  
		Thus there exists $\epsilon=\epsilon( C_A, C_Q)$ so that $\phi(hy_i)$ is within distance $\epsilon$ of $[\phi(hx_i),\phi(hFx_i)]$ in $T$; note that $\epsilon$ does not depend on $L$.
		
		Let the constant $L_0'$ used in the definition of $W_{\pm ,\epsilon,L_0',F,x_i}(g)$ be the maximum of $L_0=L_0(D,C_Q,C_A,\epsilon)$ from Theorem~\ref{thm:quasitree-cocycle} and $L'=L'(C_Q,C_A,\epsilon)$ from Lemma~\ref{lem:empty}.
		Assume moreover that the threshold $L_*=L_*(L_0',\epsilon,C_Q,\allowbreak C_A,K)$ used in the sum of the projection distances \eqref{eq:sum} is the maximum between $L_1(C_A, C_Q)$ mentioned in the beginning of the proof and $C_Q(L_0'+3\epsilon+C_Q) \geq L_0$. Then $d_{C_K(\cA')}(x_i,y_i) \geq L \geq L^*$ implies that in~$T$ $$d_T(\phi(hx_i),\phi(hy_i)) \geq L_0'+3\epsilon.
$$ 

 Moreover, $[\phi(hx_i),\phi(hy_i)]$ and $[\phi(hx_i),\phi(hFx_i)]$ agree in $B(\phi(hx_i),L_0')$.  
		As $\phi(hx_i)$ and $\phi(hy_i)$ are $\geq 3\epsilon$ apart and both are in the $\epsilon$-neighbourhood of $[\phi(x_i), \phi(gx_i)]$, we have that 
		$[\phi(hx_i),\phi(hFx_i)] \subseg_{\epsilon,L_0'} [\phi(x_i),\phi(gx_i)]$, i.e.\ $[h] \in W_{+,\epsilon,L_0',F,x_i}(g)$.  
		Let $\cC(Y,g,i)$ be the set of all $[h'] \in W_{+,\epsilon,L_0',F,x_i}(g) \cup W_{-,\epsilon,L_0',F,x_i}(g)$ with $h'\gamma=Y$. Since all $h\in \cC(Y,g,i)$ are orientation preserving,  $\cC(Y,g,i)$ is entirely contained either in $W_{+,\epsilon,L_0',F,x_i}(g)$ or in $W_{-,\epsilon,L_0',F,x_i}(g)$.
		We have just shown that our chosen $[h]$ is in $\cC(Y,g,i)$ when $d_Y(x_0,gx_0)\geq L$. A similar argument shows that $[hf],[hf^2],\ldots,[hf^{N(Y,g,i)}]$ are in $\cC(Y,g,i)$ for some $N(Y,g,i)\geq\frac{1}{C}\Tsh{L}{d_Y(x_0,gx_0)}$, where $C=C(L,f)\geq 1$ depends on the translation length of $f$.

		Since we can choose $N(Y,g,i)$ so that $hx_i$ and $ hf^{N(Y,g,i)}x_i$ lie close to $\pi_Y'(x_0)$ and $ \pi_Y'(gx_0)$, respectively, we have that  
				every $[h']\in W_{\pm,\epsilon,L_0',F,x_i}(g)$ with $h'\gamma = Y$ has $h'x_i$ in the $C$-neighbourhood of \[ \{hf^jx_i : j=0,\ldots, N(Y,g,i)\}. \]
				\smallskip 

\noindent {\sc{Step 2.}}\quad Let $\cA'_{1b}(g)$ be composed of those $Y \in \cA'$ for which 
		$$d_{C_K(\cA')}( \pi_Y'(x_0) , \pi_Y'(gx_0) ) \geq 5(C_F+C_Q+C_A+\kappa_Q+ L).$$

		Suppose $\pi_Y'(x_0), \pi_Y'(gx_0)$ appear on $Y$ in that order (the other case is similar).  
		Choose points $y,z \in Y$ so that $\pi_Y'(x_0), y, z, \pi_Y'(gx_0)$ appear on $Y$ in that order, and that $d_Y(\pi_Y'(x_0),y), d_Y(z,\pi_Y'(gx_0)) \in [C_F+C_Q,C_F+C_Q+C_A]$.
		Choose $i \in \{0,\ldots,p\}$ and $h\in H$ so that $h\gamma=Y$ and $hx_i$ is $C_A$-close to $y$ and hence $(2C_A+2C_Q + 2C_F + \kappa_Q)$-close  to $\pi_Y'(x_i)$. 

		The same argument as in Step 1 yields a finite sequence 
		$$[h], [hf],[hf^2],\ldots,[hf^{N(Y,g,i)}]$$ contained in $\cC(Y,g,i)$, for some 
		\[ N(Y,g,i)\geq \frac{1}{C}\Tsh{L}{d_Y(y,z)} \geq \frac{1}{2C}\Tsh{L}{d_Y(x_0,gx_0)}, \]
		such that every $[h']\in W_{\pm,\epsilon,L_0',F,x_i}(g)$ has $h'x_i$ in some $C$-neighbourhood of $\{hf^jx_i : j=0,\ldots, N(Y,g,i)\}$, where $C = C(L,f) +C_F+C_Q+C_A$, with $C(L,f)$ the constant appearing in Step 1.

\medskip 

\noindent {\sc{Step 3.}}\quad Let $\cA'_{2a}(g)$ be composed of those $Y \in \cA'$ for which 
		\[ \{ \pi_Y'(x_0), \pi_Y'(gx_0)\} \subset B(x_0, 6(C_F+C_Q+C_A+\kappa_Q+L)). \]
		Assume $L \geq K$.
		If $d_{C_K(\cA')}(x_0,gx_0) \leq 6(C_F+C_Q+C_A+\kappa_Q+L)+\kappa_Q$ then the distance formula \cite{BBFS} gives
		\[
		  	\sum_{Y\in \cA'}\Tsh{L}{d_{Y}(x_0, gx_0)} 
		  	\leq 4 d_{C_K(\cA')}(x_0,gx_0)
		  	\leq C':=28(C_F+C_Q+C_A+\kappa_Q+L),
		  \]
		so the Lemma is proved with $\cA'_2(g)=\cA', \cA'_1(g)=\emptyset$.
		  
		Otherwise, choose $y \in [x_0,gx_0]$ at distance $6(C_F+C_Q+C_A+L)+2\kappa_Q$ from $x_0$, then assuming $L \geq 4\kappa_Q$, for each $Y \in \cA'_{2a}(g)$ we have
		$\Tsh{L}{d_Y(x_0,gx_0)} \leq 2\Tsh{L}{d_Y(x_0,y)}$.
		Thus again by the distance formula,
		\begin{align*}
			\sum_{Y\in \cA'_{2a}(g)} \Tsh{L}{d_Y(x_0,gx_0)}
			& \leq 2 \sum_{Y \in \cA'_{2a}(g)} \Tsh{L}{d_Y(x_0,y)}
			\\ &\leq 8 d_{C_K(\cA')}(x_0,y)
		    \\ & \leq C'':= 48(C_F+C_Q+C_A+L) + 16 \kappa_Q.
		\end{align*}

\medskip 

\noindent {\sc{Step 4.}}\quad Let $\cA'_{2b}(g)$ be composed of those $Y \in \cA'$ for which 
		\[ \{ \pi_Y'(x_0), \pi_Y'(gx_0)\} \subset B(gx_0, 6(C_F+C_Q+C_A+\kappa_Q+L)). \]
		Then as in Step 3, 
		\begin{align*}
			\sum_{Y\in \cA'_{2b}(g)} \Tsh{L}{d_Y(x_0,gx_0)}
		    \leq C''.
		\end{align*}

		\smallskip
		We can now finish the proof of (a) and (b), by taking the constant $C$ to be the maximum of $2C$ and $C'+C''$, and letting $\cA'_1(g) = \cA'_{1a}(g) \cup \cA'_{1b}(g)$ and $\cA'_2(g) = \cA'_{2a}(g)\cup \cA'_{2b}(g)$.

		The statement (c) follows from the fact that, by definition, $Y\neq Y'$ implies $\cC(Y,g,i) \cap \cC(Y',g,i)=\emptyset$. 
	\end{proof}

We will apply these bounds in two different ways in the following two subsections, which are used in the cases of residually finite hyperbolic groups and mapping class groups respectively.
The challenge is to find a combination of quasi-cocycles as in Theorem~\ref{thm:quasitree-cocycle} which is proper, by finding suitable choices of $e$ and using distance formula estimates as in this subsection.

\subsection{A linear lower bound}
Our first (linear) lower bound applies in ``non-elementary'' cases, when the fixed set of $J$ in $C_K(\cA')$ contains points arbitrarily far from $\gamma$.
\begin{proposition}\label{prop:qtqc-lower-bound}
	Suppose that in addition to the Assumptions~\ref{assump:quasi-tree-complex} we have the following property:
	\begin{itemize}
		\item[($\branch$)] for every $i\in \{0,1,\dots ,p\}$ and every $n\in \N$ there exist $h \in H$ with $Jhx_i=hx_i$ and $hx_i$ at distance at least $n$ from $\gamma$.
	\end{itemize}
	Then there exists $L_*=L_*(D,C_A,C_Q, K)$ so that for any $L \geq L_*$ and any $\epsilon>0$ we can find a Banach space $E$ isometrically isomorphic to $\ell^1$, an affine uniformly $(2+\epsilon)$-Lipschitz action of $H$ on $E$, $x \mapsto \pi(g)x+\zeta(g)$, and $C>0$, so that for all $g\in H$,
\begin{equation}\label{eq:lowbound}
		\|\zeta(g)\| \geq \frac{1}{C} \sum_{Y \in \cA'} \Tsh{L}{d_{Y}(x_0,gx_0)} - C.
\end{equation}
\end{proposition}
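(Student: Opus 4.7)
The plan is to assemble a direct sum of $p+1$ quasi-cocycles from Theorem~\ref{thm:quasitree-cocycle}, one for each basepoint $x_i$, and then invoke Proposition~\ref{prop:unifLip-action} to convert them into an affine uniformly $(2+\epsilon)$-Lipschitz action on $\ell^1$. For each $i \in \{0,\ldots,p\}$, property $(\branch)$ provides $h_i \in H$ with $J h_i x_i = h_i x_i$ and $d(h_i x_i, \gamma)$ arbitrarily large; set $b_i = h_i x_i$ and $e_i = \delta_{x_i} - \delta_{b_i} \in \ell^1_0(H \cdot x_i)$, a $J$-invariant vector under the left regular action of $H$. With $\epsilon, L_0'$ chosen as in Lemma~\ref{lem:w-contributions}, Theorem~\ref{thm:quasitree-cocycle} produces quasi-cocycles $\alpha_i: H \to \ell^1_0(H\cdot x_i)$, each satisfying $\alpha_i(\id)=0$, $\alpha_i(gh) = \alpha_i(g)$ for $h \in H_{x_i}$, and having uniformly bounded defect.

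To get \eqref{eq:lowbound}, I would fix $L \geq L_*$ from Lemma~\ref{lem:w-contributions} and lower-bound $\|\alpha_i(g)\|_{\ell^1_0}$ by the $\ell^1$-norm of its ``near'' part, i.e.\ the restriction of
\[
\alpha_i(g) = \sum_{[h']\in W_+(g)}\left(\delta_{h'x_i}-\delta_{h'b_i}\right) - \sum_{[h']\in W_-(g)}\left(\delta_{h'x_i}-\delta_{h'b_i}\right)
\]
to the support of the $\delta_{h'x_i}$ terms; taking $d(b_i,\gamma)$ sufficiently large ensures the near and far support sets are disjoint in $H\cdot x_i$. For each axis $Y \in \cA'_1(g)$ with $i_Y = i$, Lemma~\ref{lem:w-contributions}(b) provides distinct cosets $[h_Y], [h_Y f],\ldots, [h_Y f^{N(Y,g,i)}]$ all lying in one of $W_+(g)$ or $W_-(g)$ (by orientation preservation), with distinct near points $h_Y f^j x_i$ on $Y$. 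Since distinct axes of $\cA'$ are disjoint subsets of $C_K(\cA')$, contributions from distinct $Y$ sit in disjoint subsets of $H\cdot x_i$ and cannot cancel, yielding
\[
\|\alpha_i(g)\|_{\ell^1_0} \;\geq\; \#\left\{\text{distinct near points with } i_Y = i\right\} \;\geq\; \frac{1}{C}\sum_{\substack{Y\in \cA'_1(g)\\ i_Y = i}}\Tsh{L}{d_Y(x_0,gx_0)}.
\]
Summing over $i$ and invoking Lemma~\ref{lem:w-contributions}(a) to replace $\cA'_1(g)$ by $\cA'$ at the cost of an additive constant gives $\sum_i\|\alpha_i(g)\|_{\ell^1_0} \geq \tfrac{1}{C'}\sum_{Y\in\cA'}\Tsh{L}{d_Y(x_0,gx_0)} - C'$.

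Finally, for each $i$, Proposition~\ref{prop:unifLip-action} applied to $Q_i = H \cdot x_i$ with basepoint $x_i$, cocycle $\beta_i(g) = \delta_{x_i} - \delta_{gx_i}$, and quasi-cocycle $\alpha_i$, produces an affine uniformly $(2+\epsilon)$-Lipschitz action of $H$ on a space $E_i$ isometrically isomorphic to $\ell^1$ with cocycle norm comparable to $\|\alpha_i(g)\|+\|\beta_i(g)\|$. The $\ell^1$-direct sum $E = E_0 \oplus\cdots\oplus E_p$ is again isometrically isomorphic to $\ell^1$, and carries an affine uniformly $(2+\epsilon)$-Lipschitz action of $H$ whose cocycle $\zeta$ satisfies $\|\zeta(g)\| \geq \tfrac{1}{C}\sum_i \|\alpha_i(g)\|$, from which \eqref{eq:lowbound} follows.

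The main obstacle I expect is controlling cancellation in the lower bound: although the near points contributed by distinct axes are automatically disjoint in $C_K(\cA')$, I also need to ensure that elements of $W_\pm(g)$ outside the identified sequence $[h_Y f^j]$ cannot cancel the contributions at the identified near points. Orientation consistency along $\gamma$, namely that all cosets whose near point is close to $h_Y f^j x_i$ lie in the same one of $W_\pm(g)$, is the crucial input, and verifying it requires careful use of the acylindricity of the $H$-action on $\gamma$ modulo $J$.
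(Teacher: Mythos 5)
Your overall architecture matches the paper's: combine per-basepoint quasi-cocycles from Theorem~\ref{thm:quasitree-cocycle} with the distance decomposition of Lemma~\ref{lem:w-contributions}, package via Proposition~\ref{prop:unifLip-action}, and sum in $\ell^1$. The gap is in your lower bound on $\|\alpha_i(g)\|$, specifically in the claim that ``taking $d(b_i,\gamma)$ sufficiently large ensures the near and far support sets are disjoint in $H\cdot x_i$.'' Both the near set $\{h'x_i : [h']\in W_\pm(g)\}$ and the far set $\{h'b_i = h'h_ix_i : [h']\in W_\pm(g)\}$ are subsets of the same orbit $H\cdot x_i$, and nothing forces $h'x_i \neq h''h_ix_i$ for distinct $[h'],[h'']\in W_\pm(g)$. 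Concretely, the branch point $h''b_i$ of one axis $h''\gamma$ can coincide with the entry point $h'x_i$ of a different axis $h'\gamma$; pushing $b_i$ far from $\gamma$ only controls the distance from $h''b_i$ to $h''\gamma$, not from $h''b_i$ to the geodesic $[x_i,gx_i]$ or to other axes. With a single branch vector $e_i = \delta_{x_i}-\delta_{b_i}$, such coincidences could wipe out the $+1$ coefficient at $h'x_i$ entirely, and there is no mechanism in your argument to rule this out.

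The paper's solution to exactly this cancellation issue is the tripod construction of Lemma~\ref{lem:qtqc3}: it chooses two far points $g_{i,0}x_i, g_{i,1}x_i$ (both in the fixed set of $J$ via property $(\branch)$, as you correctly observe is available) such that for every $h\in G$ the triple $\{\phi(hx_i),\phi(hg_{i,0}x_i),\phi(hg_{i,1}x_i)\}$ is never $\epsilon$-close to a single geodesic in the tree $T$. Setting $\alpha_i = \alpha_{g_{i,0}} + \alpha_{g_{i,1}}$ gives coefficient $+2$ at each $hx_i$ for $[h]\in W_+(g)$, and the tripod non-degeneracy forces that at most one of the two possible cancelling terms (coming from $-\delta_{h''g_{i,0}x_i}$ or $-\delta_{h'''g_{i,1}x_i}$ with $[h''],[h''']\in W_+(g)$) can actually occur, since $W_+(g)$ lies near a single quasi-geodesic. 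This preserves a residual coefficient of absolute value at least $1$ at each $hx_i$ and yields $\|\alpha_i(g)\| \geq |W_+(g)|+|W_-(g)|$. Your final paragraph flags a concern about orientation consistency along $\gamma$, but that is not the real obstacle (Lemma~\ref{lem:w-contributions}(b) together with Lemma~\ref{lem:empty} already handles it); the near/far collision is the obstruction, and it requires the two-branch trick rather than a single far point.
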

\begin{proof}
	We use the quasi-cocycles constructed in Theorem~\ref{thm:quasitree-cocycle} combined with the construction described in Proposition \ref{prop:unifLip-action}.

Let $\epsilon=\epsilon(C_A,C_Q)$, $L_0'(D,C_Q,C_A,\epsilon)$ and $L_*(L_0',\epsilon,C_Q,K)$ be the constants given by Lemma~\ref{lem:w-contributions}.

	In order to build our quasi-cocycles, we need the following.
	\begin{lemma}\label{lem:qtqc3}
		For each $i\in \{ 0,\ldots,p\}$, there exist $g_{i,0}, g_{i,1} \in H$ so that $J g_{i,j}x_i = g_{i,j}x_i$, and for every $h\in G$, the triple $\{\phi(hx_i), \phi(hg_{i,0}x_i), \phi(hg_{i,1}x_i)\}$ is not contained in the $\epsilon$-neighbourhood of a geodesic in the tree $T$.
	\end{lemma}
	\begin{proof}
		By our additional assumption ($\branch$), recalling $C_K(\cA')_J$ denotes the set of points fixed by $J$, then $Hx_i \cap C_K(\cA')_J$ contains points arbitrarily far from $\gamma$.
		Choose $g_{i,0} \in H$ so that $g_{i,0} x_i$ lies sufficiently far from $\gamma$ and in $C_K(\cA')_J$, depending on $C_Q,\epsilon$, and then choose $g_{i,1}=f^N$ so that $x_i, g_{i,0}x_i, g_{i,1} x_i$ make a sufficiently non-degenerate tripod in $C_K(\cA')$. The conclusion follows.
	\end{proof}

	For each $i=0,\ldots,p$, consider the action of $H$ on $Hx_i$ by left translation, the associated action on $\ell^1(Hx_i)$ and the coboundary $\beta_i(g)=\delta_{x_i}-\delta_{gx_i}$.
	For $j=0,1$, let $\alpha_{g_{i,j}}$ be the quasi-cocycle given by Theorem~\ref{thm:quasitree-cocycle} for the vector $e_{i,j}=\beta_i(g_{i,j}) \in \ell^1_0 (Hx_i)$ and $L=L_0'$.
	Let $\alpha_i= \alpha_{g_{i,0}}+\alpha_{g_{i,1}} : H \to \ell^1_0(Hx_i)$; as it is a sum of quasi-cocycles, it is also a quasi-cocycle.

	Apply Proposition~\ref{prop:unifLip-action} to find an affine uniformly Lipschitz action of $H$ on the space $V_i=\ell^1_0(Hx_i) \oplus \ell^1_0(Hx_i)$ for which $\zeta_i=(\alpha_i,\beta_i)$ is the cocycle.
	
These actions combine to define an affine uniformly Lipschitz action of $H$ on 
$E=\bigoplus_{i=0}^p V_i$ with the $\ell^1$ metric, for which $(\zeta_i)_{i=0,\ldots,p}$ is the cocycle.

The statements (a) and (c) in Lemma~\ref{lem:w-contributions} imply that 
	\begin{equation*}
		\sum_{Y \in \cA'} \Tsh{L}{d_{Y}(x_0,gx_0)} 
		\leq C + C \sum_{i=0}^p \Big( |W_{-,\epsilon,L_0',F,x_i}(g)| + |W_{+,\epsilon,L_0',F,x_i}(g)| \Big),  
	\end{equation*}
	so it suffices to show that for each $i\in \{ 0,\ldots,p\}$ and $g \in G$:
	\begin{equation}\label{eq:boundwalpha}
		|W_{+,\epsilon,L_0',F,x_i}(g)| +|W_{-,\epsilon,L_0',F,x_i}(g)|
		\leq \|\alpha_{g_{i,0}}(g)+\alpha_{g_{i,1}}(g)\|.
	\end{equation}
	But
	\begin{align*}
		\alpha_{g_{i,0}}(g)+\alpha_{g_{i,1}}(g)
		& = \sum_{[h] \in W_{+,\epsilon,L_0',F,x_i}(g)} \left(2\delta_{hx_i}-\delta_{hg_{i,0}x_i}-\delta_{hg_{i,1}x_i}\right) 
		\\ & \qquad - \sum_{[h] \in W_{-,\epsilon,L_0',F,x_i}(g)} \left(2\delta_{hx_i}-\delta_{hg_{i,0}x_i}-\delta_{hg_{i,1}x_i}\right).
	\end{align*}
	By Lemma~\ref{lem:empty}, $W_+(g)\cap W_-(g) =\emptyset$, and it is impossible for $[h] \in W_+(g)$ and $[h'] \in W_-(g)$ to satisfy $hx_i = h'x_i$.
	Thus, any cancelling of $2\delta_{hx_i}$ for $[h] \in W_+(g)$ can only come from terms of the form $-\delta_{h''g_{i,0}x_i}$ or $-\delta_{h''' g_{i,1}x_i}$ for some other $[h''],[h'''] \in W_+(g)$.
	Lemma~\ref{lem:qtqc3} implies that for each $[h] \in W_+(g)$, at most one of $hg_{i,0}$ and $hg_{i,1}$ could also be in $W_+(g)$.
	Thus
	\[
		\sum_{[h]\in W_{+,\epsilon,L_0',F,x_i}(g)} \left| \left(\alpha_{g_{i,0}}(g)+\alpha_{g_{i,1}}(g)\right)(hx_i) \right| \geq |W_{+,\epsilon,L_0',F,x_i}(g)|.
	\]
	Similarly, considering negative values, we obtain
	\[
		\sum_{[h]\in W_{-,\epsilon,L_0',F,x_i}(g)} \left|\left(\alpha_{g_{i,0}}(g)+\alpha_{g_{i,1}}(g)\right)(hx_i) \right| \geq |W_{-,\epsilon,L_0',F,x_i}(g)|.
	\]
	At this point we have not controlled the Lipschitz constant of the action.  However, as in Proposition~\ref{prop:unifLip-action}, given $\epsilon>0$, up to rescaling each $\alpha_i$ and conjugating the action on each $V_i$ to get an action on $\ell^1$, we can find a uniformly $(2+\epsilon)$-Lipschitz affine action on $\ell^1$ with cocycle comparable to $(\zeta_i)_{i=0,\ldots,p}$. This completes the proof.
\end{proof}

\subsection{A weaker lower bound on distance}
Without the additional assumption ($\branch $), we can only obtain a version of \eqref{eq:lowbound} in which the right hand side is of the form $\theta \left( \frac{1}{C} \sum_{Y \in \cA'} \Tsh{L}{d_{Y}(x_0,gx_0)} - C\right)$ for some proper functions $\theta$. 
The idea is that instead of constructing a quasi-cocycle using a vector $e_{i,0}+e_{i,1}$ with finite support in a tripod shape, we find a vector $e$ with support in $\gamma$ and values that decay at a suitable rate: $e$ is in $\ell^1_0(\gamma)$, but sums of $k$ translations of $e$ have $\ell^1$ norm growing almost linearly in $k$.

We now precisely describe the types of functions $\theta$ that we can thus find.
\begin{assumption}\label{assump:theta}
	Suppose $\theta:\N \to [0,\infty)$ satisfies the following:
	\begin{itemize}
		\item $\theta$ is non-decreasing with $\lim_{t\to\infty} \theta(t) = \infty$;
		\item $\theta$ is sub-additive;
		\item there exists $\Theta: \N \to [0,\infty)$ with $\Theta \in \ell^1(\N)$, and for all $t\in \N$, 
			\[
				\theta(t) \leq \sum_{i=1}^t \sum_{j=i}^\infty \Theta(j).
			\]
	\end{itemize}
\end{assumption}
\begin{lemma}
	For every $k\in \N$, 
	\[ \theta(t) = \frac{t}{1+\log^{\circ k}_+ t}  \]  
	satisfies Assumption~\ref{assump:theta}, where
	$\log^{\circ k}_+(t)$ equals the $k$-fold composition of $\log$ when that is defined and positive, and equals $0$ otherwise.
\end{lemma}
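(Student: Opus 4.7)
The plan is to verify the three conditions of Assumption~\ref{assump:theta} directly, writing $\theta(t) = t/\phi(t)$ where $\phi(t) := 1+\log_+^{\circ k}(t)$. Note that $\phi$ is non-decreasing with $\phi(1)=1$ (since $\log 1 = 0$, so $\log_+^{\circ k}(1)=0$) and $\phi(t)\to\infty$, though much more slowly than $t$.

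First, I will check that $\theta$ is non-decreasing and tends to $\infty$. Divergence is immediate since $\log_+^{\circ k}$ grows strictly slower than the identity. For monotonicity on $\N$, I would separate two regimes: below the threshold at which $\log_+^{\circ k}$ first becomes positive, $\theta(t)=t$ is trivially increasing; above this threshold $\phi$ is smooth and concave with $t\phi'(t)=\left(\prod_{i=1}^{k-1}\log^{\circ i} t\right)^{-1}\leq 1 \leq \phi(t)$, so by the mean value theorem $(t+1)\phi(t)\geq t\phi(t+1)$, and hence $\theta(t+1)\geq \theta(t)$.

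Next, sub-additivity is straightforward from monotonicity of $\phi$ alone: for $s,t \geq 0$, the inequalities $1/\phi(s), 1/\phi(t) \geq 1/\phi(s+t)$ yield
\[
\theta(s)+\theta(t) = \frac{s}{\phi(s)}+\frac{t}{\phi(t)} \geq \frac{s+t}{\phi(s+t)} = \theta(s+t).
\]

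The substantive step is constructing $\Theta$. I propose the telescoping definition
\[
\Theta(j) := \frac{1}{\phi(j)} - \frac{1}{\phi(j+1)} \geq 0.
\]
Summing over $j\geq i$ telescopes (using $1/\phi(j)\to 0$) to give $\sum_{j=i}^\infty \Theta(j)= 1/\phi(i)$; in particular $\|\Theta\|_{\ell^1(\N)} = 1/\phi(1)=1$, so $\Theta\in\ell^1(\N)$. Finally, since $\phi$ is non-decreasing, each $1/\phi(i)\geq 1/\phi(t)$ for $i\leq t$, and thus
\[
\sum_{i=1}^t\sum_{j=i}^\infty \Theta(j) = \sum_{i=1}^t \frac{1}{\phi(i)} \geq \frac{t}{\phi(t)} = \theta(t),
\]
as required. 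There is no real obstacle here beyond spotting the telescoping construction; once $\Theta$ is chosen this way, all three conditions fall out essentially by monotonicity of $\phi$.
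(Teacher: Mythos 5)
Your argument is correct, and the key step genuinely differs from the paper's. Where you differ is in the construction of $\Theta$: the paper takes the explicit function $\Theta(t) = \bigl(t\log(t)\log\log(t)\cdots\log^{\circ(k-1)}(t)\,(\log^{\circ k}(t))^2\bigr)^{-1}$ (truncated to $1$ below a threshold), verifies summability by the integral test via repeated substitution $t=e^u$, and then bounds the double sum below by splitting at the threshold and comparing tails to $1/\log^{\circ k}(i)$. You instead set $\Theta(j)=1/\phi(j)-1/\phi(j+1)$ with $\phi(t)=1+\log_+^{\circ k}(t)$, which is automatically nonnegative and automatically in $\ell^1$ with $\|\Theta\|_1=1/\phi(1)=1$ by telescoping; the tail $\sum_{j\ge i}\Theta(j)=1/\phi(i)$ then yields the required bound $\sum_{i=1}^t 1/\phi(i)\ge t/\phi(t)=\theta(t)$ by monotonicity of $\phi$ alone. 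This is cleaner, requires no integral estimates, and shows that the third condition of Assumption~\ref{assump:theta} actually holds for any $\theta(t)=t/\phi(t)$ with $\phi$ non-decreasing, divergent, and $\phi(1)<\infty$ --- a strictly more general fact. Your sub-additivity argument matches the paper's in spirit; your monotonicity argument (mean value theorem plus $t\phi'(t)\le 1\le\phi(t)$) is a small variation of the paper's sign computation for $\theta'$. One small wrinkle: your MVT step handles consecutive integers both strictly below or both strictly above the threshold $T'$, but not the pair straddling it; the easiest patch is the one implicit in the paper --- observe that the real-valued $\theta$ is continuous at $T'$ and non-decreasing on each side, hence non-decreasing globally, then restrict to $\N$.
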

\begin{proof}
	Let 
	\[
		\Theta(t) = \frac{1}{t \log(t) \log(\log(t)) \cdots \log^{\circ (k-1)}(t) (\log^{\circ k}(t))^2},
	\]
	for $t>T$ large enough that the denominator is positive, otherwise set $\Theta(t)=1$.
	Then $\Theta$ is summable by the integral test, repeatedly substituting $t=e^u$, etc. to see that
	\begin{align*}
		\int_{t=T}^\infty  \frac{1}{t \log(t) \cdots (\log^{\circ k}(t))^2} dt
		& = 
		\int_{u=\log(T)}^\infty  \frac{1}{u \log(u) \cdots (\log^{\circ (k-1)}(u))^2} du
		\\ & = \cdots
		= \int_{v=\log^{\circ k}(T)}^\infty  \frac{1}{v^2} dv
		= \frac{1}{\log^{\circ k}(T)} 
		< \infty.
	\end{align*}
	This same calculation shows that for $t>T$,
	\[
		\sum_{i=1}^t \sum_{j=i}^\infty \Theta(j)
		> T+\sum_{i=T}^t \frac{1}{\log^{\circ k}(i)}
		> T+\frac{t-T}{\log^{\circ k}(t)} 
		> \frac{t}{1+\log^{\circ k}(t)}.
	\]
	On the other hand, for $t \leq T$,
	\[
		\sum_{i=1}^t \sum_{j=i}^{\infty} \Theta(j) > \sum_{i=1}^t \Theta(i) = t = \theta(t).
	\]
	For any $s,t \in \N$,
	\[
		\theta(s+t)  
		=  \frac{s}{1+\log^{\circ k}_+ (s+t)} + \frac{t}{1+\log^{\circ k}_+ (s+t)}  
		\leq \theta(s)+\theta(t)
	\]
	since $\log^{\circ k}_+$ is non-decreasing,
	thus $\theta$ is sub-additive.

	Let $T'$ be the $k$-fold composition of $\exp$ applied to $0$, so $\log^{\circ k}(t)$ is well-defined and positive if and only if $t>T'$.
	For $t \leq T'$, $\theta(t)=t$ so is clearly non-decreasing.
	For $t > T'$, 
	\[
		\theta'(t) = \frac{1}{(1+\log^{\circ k}(t))^{2}}
		  \left( 1+ \log^{\circ k}(t) - \frac{t}{ \log^{\circ(k-1)}(t) \cdots \log^{\circ 2}(t) \log(t)t} \right)
		  > 0
	\]
	since for $k =1$, $\log(t)>0$ and 
	for $k \geq 2$ and $t > T'$, 
	\[ 1-\frac{1}{\log^{\circ(k-1)}(t) \cdots \log(t)} > 0. \]
	Thus in all cases $\theta$ is increasing.
\end{proof}

We now prove that, without property $(\branch )$, a weaker lower bound for the norm of the cocycle can be obtained.
\begin{proposition}\label{prop:qtqc-lower-bound-weak}
	Suppose that the Assumptions~\ref{assump:quasi-tree-complex} are satisfied and let $\theta$ be a function satisfying Assumption~\ref{assump:theta}.

	Then there exists $L_*=L_*(D,C_A,C_Q, K)$ so that for every $L \geq L_*$ and every $\epsilon>0$ we can find a Banach space $E$ isometrically isomorphic to $\ell^1$, an affine uniformly $(2+\epsilon)$-Lipschitz action of $H$ on $E$, $x \mapsto \pi(g)x+\zeta(g)$, and $C>0$, so that for all $g\in H$,
	\[
		\|\zeta(g)\| \geq \frac{1}{C}\theta\Bigg( \frac{1}{C}  \sum_{Y \in \cA'} \Tsh{L}{d_{Y}(x_0,gx_0)} -1 \Bigg).
	\]
	
	In the inequality above, we interpret $\theta(t)$ for $t\in \R$ as $\theta(t) = \theta(\max\{0,\lceil t \rceil \})$. 
\end{proposition}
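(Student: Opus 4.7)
The plan is to follow the strategy of Proposition~\ref{prop:qtqc-lower-bound} with the key modification that each pair $e_{i,0}+e_{i,1}$ of finitely supported vectors is replaced by a single $J$-fixed vector $e_i \in \ell^1_0(Hx_i)$ supported on the $f$-orbit of $x_i$ in $\gamma$, with decaying coefficients determined by $\Theta$. Without the branching property $(\branch)$, the non-degenerate tripods of Lemma~\ref{lem:qtqc3} are unavailable; instead, cancellation in the quasi-cocycle must be controlled purely through the decay of the coefficients, at the cost of only sub-linear, rather than linear, growth of partial sums of $f$-translates.

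Concretely, given $\Theta$ witnessing $\theta$ as in Assumption~\ref{assump:theta}, set $A = \sum_{j\geq 1}\Theta(j)$, and pick an integer $N_0$ depending only on $(D,C_A,C_Q,K)$ (its precise value determined below). For each $i \in \{0,\ldots,p\}$ let
\[
e_i = -A\delta_{x_i} + \sum_{j\geq 1}\Theta(j)\,\delta_{f^{j+N_0}x_i}\in\ell^1_0(Hx_i),
\]
which has $\|e_i\|=2A$, is $J$-fixed because $J$ pointwise fixes $\gamma$, and satisfies, by direct computation from the convolution structure and the identity underlying Assumption~\ref{assump:theta},
\[
\Big\|\sum_{k=0}^{N-1} f^k \cdot e_i\Big\| = 2\theta_{\mathrm{new}}(N),\quad \theta_{\mathrm{new}}(N) := \min(N,N_0)A + \theta(\max(0,N-N_0)).
\]
Sub-additivity of $\theta$ together with the trivial bound $\theta(N_0)\leq N_0 A$ gives $\theta_{\mathrm{new}}(N)\geq \theta(N)$. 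Apply Theorem~\ref{thm:quasitree-cocycle} with this $e_i$, basepoint $x_i$, and $L=L_0'$, combine the resulting quasi-cocycles with the coboundaries $\beta_i(g)=\delta_{x_i}-\delta_{gx_i}$ via Proposition~\ref{prop:unifLip-action}, and rescale to obtain an affine uniformly $(2+\epsilon)$-Lipschitz action of $H$ on $\ell^1$ with cocycle $\zeta$.

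For the lower bound, Lemma~\ref{lem:w-contributions} says that for each $Y\in\cA'_1(g)$ with $i_Y=i$ the set $\cC(Y,g,i)$ corresponds (via the identification $[h]\leftrightarrow k$ given by $h = h_Y f^k$ for a fixed representative $h_Y$) to a subset $I_Y\subset \Z$ containing $N(Y,g,i)+1$ consecutive integers, contained in its $C_{\text{ext}}$-enlargement, and with uniform sign $\sigma_Y$. Distinct $Y$'s yield disjoint supports on $Hx_i$, so the triangle inequality applied to the at most $2C_{\text{ext}}$ extras outside the central block gives
\[
\|\alpha_i(g)|_Y\| \;\geq\; 2\theta_{\mathrm{new}}(N(Y,g,i)+1)-4C_{\text{ext}}A.
\]
Choosing $N_0\geq 2C_{\text{ext}}$ makes this at least $\theta_{\mathrm{new}}(N(Y,g,i)+1)\geq \theta(N(Y,g,i)+1)$. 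Summing over $Y\in\cA'_1(g)$ and $i$, using the bound $N(Y,g,i)\geq \tfrac{1}{C}\Tsh{L}{d_Y(x_0,gx_0)}$, the sub-additivity of $\theta$, and Lemma~\ref{lem:w-contributions}(a) to pass from $\cA'_1(g)$ to $\cA'$ (at the cost of an additive constant), yields the claimed lower bound for $\zeta$.

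The main obstacle is the correct choice of $e_i$: it must produce partial sums $\sum_{k=0}^{N-1}f^ke_i$ of norm at least $c\theta(N)$, yet $\|e_i\|$ must remain small enough that the $\ell^1$-error from the $C_{\text{ext}}$ geometric extras in $\cC(Y,g,i)$ does not overwhelm the main term. The shift $N_0$ resolves this tension by creating an initial linear-growth regime $\min(N,N_0)A$ in $\|\sum_{k=0}^{N-1}f^ke_i\|$ whose amplitude $N_0 A$ can be chosen arbitrarily large while keeping $\|e_i\|=2A$ independent of $N_0$; thus $N_0$ may be taken large enough to dominate $C_{\text{ext}}$, which depends solely on the acylindricity data $(D,C_A,C_Q,K)$.
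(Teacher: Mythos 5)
Your proposal is essentially correct and arrives at the same conclusion, but it takes a genuinely different route from the paper's. The paper chooses the \emph{symmetric} vector
\[
e_i = \sum_{j \in \N} \Theta(j)\left(\delta_{f^j x_i} - \delta_{f^{-j}x_i}\right),
\]
and then sidesteps all cancellation issues by restricting attention to positions $hf^jx_i$ with $j < 0$, strictly to one side of the projection interval: there the contributions from \emph{every} $[h'] \in \cC(Y,g,i)$, including any ``extra'' elements beyond $\{[h],\ldots,[hf^N]\}$, have the same sign, so dropping the extras only loses (absorbing a uniformly bounded error at positions $-C'\leq j<0$). Your approach replaces this by an \emph{asymmetric} $e_i$ with a shifted tail and then controls the extras by a global triangle inequality, using the new parameter $N_0$ to build in an initial linear-growth regime in $\|\sum_{k=0}^{N-1}f^ke_i\|$ large enough to dominate the triangle-inequality loss. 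Both strategies work; the paper's avoids the need to count extras at all and has no extra parameter, whereas yours makes the mechanism (``decaying coefficients give sub-linear growth of partial sums'') very explicit and exhibits a clean formula $\theta_{\mathrm{new}}$.

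Two points deserve more care than your write-up gives. First, your claimed norm identity $\bigl\|\sum_{k=0}^{N-1}f^ke_i\bigr\|=2\theta_{\mathrm{new}}(N)$ is only an inequality ``$\geq$'' (the exact half-norm for $N>N_0$ is $(N_0+1)A+\sum_{j=1}^{N-1-N_0}\sum_{\ell>j}\Theta(\ell)$, which is $\geq N_0A+\theta(N-N_0)$ by Assumption~\ref{assump:theta}); this is harmless since the inequality is what you use. Second, the step ``Choosing $N_0\geq 2C_{\text{ext}}$ makes this at least $\theta_{\mathrm{new}}(N+1)$'' requires $\theta_{\mathrm{new}}(N+1)\geq 4C_{\text{ext}}A$, which forces $N_0\geq 4C_{\text{ext}}$ \emph{and} $N+1\geq 4C_{\text{ext}}$; for the relevant $Y$'s this last condition should be enforced by taking the threshold $L_*$ large enough so that $N(Y,g,i_Y)\geq\frac{1}{C}L\geq 4C_{\text{ext}}$ whenever $\Tsh{L}{d_Y(x_0,gx_0)}>0$. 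This is the same kind of threshold-tuning already happening in Lemma~\ref{lem:w-contributions}, but it should be made explicit since otherwise the small-$N$ terms could in principle accumulate. Finally, your identification of $\cC(Y,g,i)$ with a subset of $\Z$ via $[h']\leftrightarrow m$ with $h'=h_Yf^m$ implicitly assumes $\Stab(\gamma)/J$ is generated by $f$; as in the paper's argument, this is harmless because extras with $h'x_i$ off the $\langle f\rangle$-orbit of $h_Yx_i$ have $h'\cdot e_i$ supported disjointly from the main sum and hence only increase the $\ell^1$ norm, but it is worth stating.
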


\begin{proof}
For every $i\in \{ 0,1,\dots , p\}$, consider the action of $H$ on $\ell^1 (Hx_i)$ and the coboundary $\beta_i(g)=\delta_{x_i}-\delta_{gx_i}$.

	Consider the constants provided by Lemma \ref{lem:w-contributions}, $\epsilon=\epsilon(C_A,C_Q)$ and $L_0'=L_0'(D,C_Q,C_A,\epsilon)$ to be used in the definition of $W_{\pm, \epsilon,L,F,x_i}(g)$, and the threshold $L_*=L_*(L_0',\epsilon,C_Q, C_A,K)$ to be used in the sum of the projection distances.

For each $i =0, \ldots,p$ we wish to construct a suitable quasi-cocycle using Theorem~\ref{thm:quasitree-cocycle}. In what follows, we use the notation from the conclusion of Lemma \ref{lem:w-contributions}.
 
The main difference is that, in the application of Theorem \ref{thm:quasitree-cocycle}, we cannot choose the vector $e_i=\beta_i(g) \in \ell^1_0(Hx_i)$, or indeed any vector $e_i\in \ell^1_{00}(Hx_i)$, since the subgroup $J$ may, in the absence of the assumption ($\branch$), fix only $\gamma$ and nothing else, so $j\cdot e =e$ would force $gx_i\in \gamma$, and the associated quasi-cocycle would not grow in all directions, e.g. $\|\alpha(f^m)\|$ would remain bounded as $m\to \infty$.  

	Instead, let 
	\[
		e_i = \sum_{j \in \N} \Theta(j) \left(\delta_{f^j x_i} - \delta_{f^{-j}x_i} \right) \in \ell^1_0(Hx_i),
	\] 
	where $\Theta$ is the function from Assumption~\ref{assump:theta} for $\theta$.  The support of $e_i$ lies in $\gamma$ so $j\cdot e_i = e_i$ for all $j\in J$.
	
	Let $\alpha_i:H \to \ell^1_0(Hx_i)$ be the quasi-cocycle given by Theorem~\ref{thm:quasitree-cocycle} for the vector $e_i$. The values of $\alpha_i(g)$ on $Y$ are found by summing $h\cdot e_i$ where $hx_i$ lies along the interval in $Y$ between $\pi'_Y(x_i)$ and $\pi'_Y(gx_i)$, in particular outside a uniformly bounded neighbourhood of this interval the values summed are either all negative or all positive.
	Suppose, without loss of generality, that the orientation on $Y$ is from $\pi'_Y(x_i)$ to $\pi'_Y(gx_i)$, with $hx_i$ close to $\pi'_Y(x_i)$.
	By Assumption~\ref{assump:theta} and the previous discussion, for each $Y$ we can just take the negative values and bound:
	\begin{align*}
		 \sum_{y \in Hx_i \cap Y} |\alpha_i(g)(y)| 
		& \geq \sum_{j<0} \Big| \sum_{[h'] \in \cC(Y,g,i)} (h' \cdot e_i)(hf^jx_i) \Big|
		\\ & \geq \sum_{j<0} \Big| \sum_{k=0}^{N(Y,g,i)} (hf^k \cdot e_i)(hf^j x_i) \Big|
		\\ & \geq \sum_{j<0} \sum_{k=0}^{N(Y,g,i)} \Theta(-j+k)
		\\ & \geq \sum_{k=1}^{N(Y,g,i)}\sum_{j=k}^{\infty} \Theta(j)
		\geq \theta(N(Y,g,i)).
	\end{align*}

	Apply Proposition~\ref{prop:unifLip-action} to find an affine uniformly Lipschitz action of $H$ on the space $V_i = \ell^1_0(Hx_i)\oplus \ell^1_0(Hx_i)$ with cocycle $\zeta_i=(\alpha_i,\beta_i)$.
	Let $E=\bigoplus_{i=0}^p V_i$ with the $\ell^1$-metric, and combine the actions to define an affine uniformly Lipschitz action of $H$ on $E$ with cocycle $\zeta = (\zeta_i)_{i=0,\ldots,p}$.

	We have that 
	\begin{align*}
		\|\zeta(g)\| 
		& = \sum_{i=0}^p \|\zeta_i(g)\|
		\geq \sum_{i=0}^p \sum_{Y \in \cA'} \sum_{y\in Y} |\alpha_i(g)(y)|
		\\ & \geq \sum_{Y\in \cA'_1(g)} \theta(N(Y,g,i_Y))
		 \geq \theta\bigg( \sum_{Y\in \cA'_1(g)} N(Y,g,i_Y) \bigg)
		\\ & \geq  \theta\bigg( 
			\sum_{Y \in \cA'_1(g)} \frac{1}{C} \Tsh{L}{d_Y(x_0,gx_0)}\bigg)
		\\ & \geq \theta\left(  \frac{1}{C} \sum_{Y \in \cA'} 
			\Tsh{L}{d_Y(x_0,gx_0)} -1 \right)
	\end{align*} 
	using the subadditivity of $\theta$ and the bound from Lemma~\ref{lem:w-contributions}, (a).

	Again, at this point we have not controlled the Lipschitz constant of the action.  However, again as in Proposition~\ref{prop:unifLip-action}, given $\epsilon>0$, up to rescaling each $\alpha_i$ and conjugating the action on each $V_i$ to get an action on $\ell^1$, we can find a uniformly $(2+\epsilon)$-Lipschitz affine action on $\ell^1$ with comparable bound on the cocycle.
\end{proof}

\section{Proper quasi-cocycles} 
\label{sec:res-fin-hyp-MCG}
We now combine the methods of the previous section with the results in \cite{BBF-QT}.

\subsection{Residually finite hyperbolic groups}  
\label{ssec:resfinhyp-proper}  

	Bestvina--Bromberg--Fujiwara show the following.
\begin{theorem}[Bestvina--Bromberg--Fujiwara \cite{BBF-QT}]
	\label{thm:bbf-rfh}
	Suppose $G$ is a non-elementary residually finite hyperbolic group with Cayley graph $\Gamma$.
	Then there exist constants $\xi', K, C_Q, C_A $ with the following property:
	For any $L_* \geq K$ one can find:
	\begin{itemize}
		\item a finite index subgroup $H \leq G$, 
		\item finitely many $C_A$-quasi-axes $\gamma_1, \ldots, \gamma_n$ in $\Gamma$ for hyperbolic elements in $G$,
		\item for each $\cA_i=H\cdot \gamma_i$ projections $\{\pi'_Y\}_{Y\in \cA_i}$ satisfying the strong projection axioms with constant $\xi'$, and $C_K(\cA_i)$ a $C_Q$-quasi-tree with $H$ acting $(D,B)$-acylindrically on it modulo the trivial group,
		\item basepoints $x_i\in \gamma_i$, $i=1,\ldots,n$, and a constant $C>0$ so that for any $g \in H$,
			\[
				|g| 
				= d_\Gamma(\id,g)
				\leq C \sum_{i=1}^{n} \sum_{Y\in\cA_i} \Tsh{L_*}{d_{Y}(x_i,gx_i)} +C.
			\]
	\end{itemize}
\end{theorem}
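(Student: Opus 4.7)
The plan is to follow the construction of Bestvina--Bromberg--Fujiwara in \cite{BBF-QT}, which shows precisely that residually finite hyperbolic groups have property (QT). The strategy proceeds in four main steps: produce a rich enough collection of hyperbolic elements with appropriately separated quasi-axes, pass to a finite-index subgroup via residual finiteness so that distinct $H$-orbits of axes remain well-separated, verify the strong projection axioms for each individual orbit $\cA_i$, and finally read off the distance formula.

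For the first step, I would fix a loxodromic element $f \in G$ and consider its $G$-conjugacy class. In a non-elementary hyperbolic group, such classes are abundant, and any two independent loxodromic elements generate a free subgroup on appropriate powers. The axes of conjugates of $f$ form a $G$-invariant family of $C_A$-quasi-geodesics in $\Gamma$, with $C_A$ depending only on the hyperbolicity constant and the translation length of $f$. For the second step, use residual finiteness to select a finite-index normal subgroup $H \triangleleft G$ so that the ``bad'' relations between close conjugate axes are eliminated: concretely, finitely many elements of $G$ whose existence would violate separation are killed in $G/H$. This is the role of residual finiteness in \cite{BBF-QT}; it produces the partition of the $G$-orbit of axes into the finitely many $H$-orbits $\cA_1, \ldots, \cA_n$.

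For the third step, within each $\cA_i$ define the projections $\pi'_Y$ to be coarse nearest-point projections to the quasi-geodesic $Y$. The strong projection axioms (with a uniform constant $\xi'$ independent of $L_*$) are verified by direct hyperbolic geometry combined with the separation obtained from residual finiteness: the key is that within a single $H$-orbit, the axes are either identical or uniformly far apart in the projection distance. The projection complex machinery of \cite{BBFS} then provides, for any $K \geq 4\xi'$, a $C_Q$-quasi-tree $C_K(\cA_i)$ on which $H$ acts, with $C_Q$ depending only on $\xi'$ and $C_A$. Acylindricity of this action modulo the trivial group is a consequence of acylindricity of the $H$-action on $\Gamma$ itself together with the stability of the projection complex.

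For the fourth step, the distance formula is the main output: the BBF distance formula for the projection complex action gives $d_{C_K(\cA_i)}(x_i, gx_i)$ comparable to $\sum_{Y \in \cA_i} \Tsh{L_*}{d_Y(x_i, gx_i)}$ up to multiplicative and additive errors, while the fact that $H$ acts properly on the product $\prod_i C_K(\cA_i)$ (i.e., that the resulting map has quasi-isometric orbits) combines these to bound $|g|$. The hardest step is the third: arranging the strong projection axioms simultaneously for all colors requires a careful counting argument using the hyperbolicity of $\Gamma$ together with the residual finiteness, and this is where the full force of \cite{BBF-QT} is used. I would not attempt to re-prove this but invoke the construction of that paper directly.
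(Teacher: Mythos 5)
Your proposal takes essentially the same approach as the paper: both recognize that the statement is a repackaging of Bestvina--Bromberg--Fujiwara's proof of property (QT) for residually finite hyperbolic groups, and both invoke that construction directly rather than re-deriving it. The paper is slightly more precise about the one genuine adjustment required -- namely that in \cite{BBF-QT}*{\S 3.3}, the segment constant in their Proposition 3.3 must be chosen relative to $L_*$ so that the constants $\xi'$, $K$, $C_Q$, $C_A$ come \emph{before} $L_*$ in the quantifier order -- but you capture this point implicitly when you insist $\xi'$ be independent of $L_*$.
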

\begin{proof}
	This follows the proof of property (QT) for such $G$ \cite{BBF-QT}*{\S 3} exactly, with the only difference in \cite{BBF-QT}*{\S 3.3} in the application of \cite{BBF-QT}*{Proposition 3.3}: one fixes the segment constant `$L$' to satisfy \cite{BBF-QT}*{Proposition 3.3} for `$K$'$=L_*$.
	The rest of the proof follows to find the finite index subgroup, and uniform quasi-axes, so that the required distance bound holds.
\end{proof}

We now prove:
\begin{varthm}[Theorem \ref{thm:proper-res-fin-hyp}.]
If $G$ is a residually finite hyperbolic group, then, for every $\epsilon>0$, $G$ admits an affine uniformly $(2+\epsilon)$-Lipschitz action on $\ell^1=\ell^1(\N)$ with undistorted orbits, and hence likewise on $L^1=L^1([0,1])$.
\end{varthm}
\begin{proof}
	By Proposition~\ref{prop:lattices}\eqref{item:proper}, we may assume that $G$ is torsion-free. We may also assume that $G$ is non-elementary, since $\Z$ has a standard affine isometric action on $\ell^1(\N)$, generated by $f\mapsto f+\delta_1$.  

	Let $\xi',K,D,B,C_A,C_Q$ be given by Theorem~\ref{thm:bbf-rfh}, let $L_*\geq K$ be then given by Proposition~\ref{prop:qtqc-lower-bound}.
	Apply Theorem~\ref{thm:bbf-rfh} to find the finite index subgroup $H$, collections of axes $\cA_1, \ldots, \cA_n$, and constant $C$ for the distance estimate.

	For $i=1,\ldots,n$ apply Proposition~\ref{prop:qtqc-lower-bound} to find a space $E_i$ isometrically isomorphic to $\ell^1$ with an affine uniformly $(2+\epsilon)$-Lipschitz action on $E_i$ with cocycle $\zeta_i$ so that the cocycle bound is satisfied for $\cA'=\cA_i$ and `$C$' $=C_i$.
	Let $C_*= \max\{C_1,\ldots,C_n\}$.  
	Consider the $\ell^1$ space $\bigoplus_{i=1}^n E_i$ (isometrically isomorphic to $\ell^1$) with the product action having cocycle $(\zeta_i)$.
	Then combining the distance bounds, for any $g\in H$,
	\begin{align*}
		|g| 
		= d_\Gamma(\id,g)
		& \leq C \sum_{i=1}^{n} \sum_{Y\in\cA_i} \Tsh{L_*}{d_{Y}(x_i,g x_i)} +C
		\\ & 
		\leq C C_* \sum_{i=1}^n \|\zeta_i(g)\| +CC_*^2m+C
		\\ & = CC_* \|\zeta(g)\| +CC_*^2m+C. \qedhere
	\end{align*}
\end{proof}

Induction then gives us proper affine uniformly Lipschitz actions on $L^1$ for simple rank $1$ Lie groups.
\begin{proof}[Proof of Corollary~\ref{cor:rank-one-proper}]
	For $G$ a simple Lie group with real rank $1$ and Haar measure $\mu$, let $\Lambda$ be a residually finite uniform lattice in $G$.
	By Theorem~\ref{thm:proper-res-fin-hyp}, $\Lambda$ has a affine uniformly Lipschitz action on $\ell^1(\N)=L^1(\N,\sigma)$ with undistorted orbits, where $\sigma$ is the counting measure.
	By Proposition~\ref{prop:lattices}, (2), $G$ admits a continuous affine uniformly Lipschitz action on $L^1(\Lambda\backslash G \times \N, \mu \times \sigma) \cong L^1([0,1])$ with undistorted orbits.	
\end{proof}

\subsection{Proper quasi-cocycles for mapping class groups}
\label{ssec:mcg}

Let $\Sigma$ be a connected orientable surface of genus $g$ with $p$ punctures and let $\xi (\Sigma )= 3g + p -3$ be the complexity of the surface. In what follows, we consider the mapping class group $\mcgs$ of the surface $\Sigma$, that is, the quotient of the group of homeomorphisms of $\Sigma$ by the subgroup of homeomorphisms isotopic to the identity. The mapping class group of a surface of finite type is finitely generated \cite{Birman:Braids}. For $\xi (\Sigma )\leq 1$, $\mcgs$ is virtually free and therefore has a standard affine isometric action on $\ell^1 (\N )$ such that the orbit map is a quasi-isometric embedding. Hence, in what follows we  assume that $\xi (\Sigma )\geq 2$.   

	Bestvina--Bromberg--Fujiwara show the following \cite{BBF-QT}*{\S 4}.

\begin{theorem}[Bestvina--Bromberg--Fujiwara \cite{BBF-QT}]
	\label{thm:bbf-mcg}
	Suppose $G$ is a finite index subgroup of a mapping class group $\mcgs$ of a surface $\Sigma$ as above.
	Then there exist constants $\xi', K, C_Q, C_A $ and  two functions $D$ and $B$ with the following property:
	For any $L_* \geq K$ one can find:
	\begin{itemize}
		\item a finite index subgroup $H \leq G$, 
		\item a finite collection of subsurfaces $U_i, i\in \{ 1,\ldots, n \},$ and, for each $i$, a $C_A$-quasi-axis $\gamma_i$ in the curve graph $\cC(U_i)$ for a $U_i$-pseudo-Anosov element,
		\item for each $\cA_i=H\cdot \gamma_i$, projections $\{\pi'_Y\}_{Y\in \cA_i}$ satisfying the strong projection axioms with constant $\xi'$, and  $C_K(\cA_i)$ the corresponding $C_Q$-quasi-tree endowed with an $H$-isometric action, as defined in \cite[Section 2.3]{BBF-QT}
		\item the action of $H$ on each $C_K(\cA_i)$, $i\in \{ 1,\ldots, n \},$ is $(D,B)$-acylindrical on $\gamma_i$ modulo $J_i$, where $J_i$ is the subgroup of $H$ that fixes $U_i$, 
		\item basepoints $x_i\in \gamma_i$, $i=1,\ldots,n$, and a constant $C>0$ so that for any $g \in H$,
			\[
				|g| 
				= d_\Gamma(\id,g)
				\leq C \sum_{i=1}^{n} \sum_{Y\in\cA_i} \Tsh{L_*}{d_{Y}(x_i,gx_i)} +C.
			\]
	\end{itemize}
\end{theorem}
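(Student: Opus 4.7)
The plan is to follow the strategy of Bestvina--Bromberg--Fujiwara~\cite{BBF-QT}*{\S 4} essentially verbatim, with only minor bookkeeping changes needed to match the constant $L_*$ being asked of us rather than the one their proof naturally produces. The starting point is the Masur--Minsky distance formula: up to multiplicative and additive errors, the word length of an element $g\in \MCG(\Sigma)$ is comparable to $\sum_{Y} \Tsh{L_*}{d_Y(x,gx)}$, where the sum runs over all isotopy classes of essential subsurfaces $Y\subseteq \Sigma$ and $x$ is a basepoint in the marking complex. After passing to a torsion-free finite-index subgroup $G'\leq G$ (which exists since $\mcgs$ is virtually torsion-free), subsurfaces of $\Sigma$ fall into finitely many $G'$-orbits, which we index by representatives $U_1,\dots,U_n$; the distance formula then breaks into the sum $\sum_{i=1}^n \sum_{Y\in G'\cdot U_i} \Tsh{L_*}{d_Y(x,gx)}$.

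Next, for each $i$ we need to realise $G'\cdot U_i$ as an $H$-orbit of a uniform quasi-axis inside a quasi-tree satisfying the strong projection axioms. Following~\cite{BBF-QT}*{Proposition 4.14 and \S 4.5}, I would pick a pseudo-Anosov element $f_i$ supported on $U_i$ and let $\gamma_i\subset \cC(U_i)$ be its axis; the $H$-translates of $\gamma_i$ are labelled by $H/J_i$ where $J_i$ is the stabilizer of $U_i$ (containing the pseudo-Anosov subgroup acting on $U_i$ together with mapping classes acting trivially on $U_i$). The subsurface projections $\pi_{\cC(V)}\colon \cC(U)\to \cC(V)$ between curve graphs of nested or overlapping subsurfaces, composed with closest-point projection to $f_i$-axes, give candidate projections $\pi'_Y$. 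The Behrstock inequality and Bounded Geodesic Image theorem are exactly what is needed to verify the strong projection axioms with a universal constant $\xi'$, as carried out in~\cite{BBF-IHES}; the associated quasi-tree of spaces $C_K(\cA_i)$ is a $C_Q$-quasi-tree by~\cite{BBFS}*{Theorem 6.6} provided $K\geq 4\xi'$. The element $f_i$ acts hyperbolically on $C_K(\cA_i)$ with $\gamma_i$ as a quasi-axis, and after possibly shrinking to a deeper finite-index subgroup $H\leq G'$, no element flips the ends of any axis.

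The acylindricity modulo $J_i$ comes from Bowditch's theorem that the action of $\MCG(\Sigma)$ on the curve graph $\cC(\Sigma)$ is acylindrical, together with its analogues for subsurface curve graphs. Concretely, elements of $H$ that pointwise coarsely fix a long subsegment of $\gamma_i$ must either fix $U_i$ setwise (hence lie in $J_i$ modulo finitely many cosets) or give a contradiction to acylindricity of the $\cC(U_i)$-action. This yields the functions $D(\epsilon), B(\epsilon)$ uniformly across $i$ since there are only finitely many orbit types. Choosing basepoints $x_i\in \gamma_i$ and combining the estimates across all $i$, the Masur--Minsky formula gives the desired inequality on $|g|$.

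The main obstacle, and the only step where we deviate from the verbatim argument in~\cite{BBF-QT}*{\S 4}, is ensuring that the threshold $L_*$ in the distance formula can be chosen freely (larger than $K$) independently of other constants, so that it matches what Proposition~\ref{prop:qtqc-lower-bound-weak} will demand later. This is handled exactly as in the residually finite hyperbolic case (Theorem~\ref{thm:bbf-rfh}): one replaces the ``segment length'' constant $L$ appearing in~\cite{BBF-QT}*{Proposition 3.3} by $L_*$, then passes to a further finite-index subgroup $H$ so that the uniform quasi-axis property and the strong projection axioms still hold with the enlarged parameter. Since all the projection/acylindricity estimates rescale continuously in the cut-off parameter, this substitution is harmless, and the resulting $H,\cA_i,\pi'_Y,x_i$ satisfy all the listed properties.
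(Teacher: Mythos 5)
Your proposal follows essentially the same route as the paper: cite \cite{BBF-QT}*{\S 4} nearly verbatim, pass to a suitable further finite-index subgroup of the given $G$, and then adjust the threshold parameter so that the Masur--Minsky distance estimate holds with the prescribed cutoff $L_*$. One small slip: when you explain how to tune the cutoff, you point to \cite{BBF-QT}*{Proposition 3.3}, but that is the proposition for the residually finite hyperbolic case (Theorem~\ref{thm:bbf-rfh}); the analogue in the mapping class group setting is \cite{BBF-QT}*{Proposition 4.18}, where one chooses the auxiliary constant `$T$' given a choice of `$K$' $\geq L_*$ so that the distance bound holds with threshold $L_*$.
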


\begin{proof}
	This follows the proof of property (QT) for such $G$ \cite{BBF-QT}*{\S 4} nearly exactly, with the only differences:
	\begin{itemize}
		\item when taking the initial finite index subgroup `$G$' in \cite{BBF-QT}*{\S 4.8}, one may intersect with our given $G$ to find a further finite index subgroup.
		\item in \cite{BBF-QT}*{\S 4.6} in the application of \cite{BBF-QT}*{Proposition 4.18}: one chooses `$T$' given a choice of `$K$'$\geq L_*$ so that the distance bound of \cite{BBF-QT}*{Proposition 4.18} holds for `$K$'$=L_*$.
	\end{itemize}
	The rest of the proof follows to find the finite index subgroup, and uniform quasi-axes, so that the required distance bound holds.
\end{proof}

We now show:
\begin{varthm}[Theorem \ref{thm:proper-mcg}.]
The mapping class group, $\mcgs$, of a surface $\Sigma$
	of finite type admits an affine uniformly $(2+\epsilon)$-Lipschitz action on $\ell^1$ (hence also on  $L^1=L^1([0,1])$) with proper orbits, for any $\epsilon>0$.
	
	Moreover, for any function $\theta$ satisfying Assumption~\ref{assump:theta}, one can require that the cocycle $\alpha$ satisfies $\|\alpha(g)\| \geq \frac{1}{C}\theta(|g|)-C$ for some $C$ and all $g\in G$.
\end{varthm}
\begin{proof}
As mentioned in the beginning of the section, without loss of generality we may assume that $\xi (\Sigma ) \geq 2$.

We begin with a choice of finite index subgroup $G\leq \mcgs$ to make sure that there is no axis flipping.

Let constants $\xi',K,C_Q,C_A$ and functions $D,B$ be given by Theorem~\ref{thm:bbf-mcg}. 


Let $L_*\geq K$ be then given by Proposition~\ref{prop:qtqc-lower-bound}.
	Apply Theorem~\ref{thm:bbf-mcg} to find the finite index subgroup $H$, collections of axes $\{\cA_i\}$ and quasi-trees $\{C_K(\cA_i)\}$, and constant $C$ for the distance estimate.
	
	Identically to the proof of Theorem~\ref{thm:proper-res-fin-hyp}, for $i=1,\ldots,n$ apply Proposition~\ref{prop:qtqc-lower-bound-weak} to find a space $E_i$ isometrically isomorphic to $\ell^1$ with an affine uniformly $(2+\epsilon)$-Lipschitz action on $E_i$ with cocycle $\zeta_i$ so that the cocycle bound is satisfied for $\cA'=\cA_i$ and `$C$' $=C_i$.
	Let $C_*= \max\{C_1,\ldots,C_n\}$.  
	Consider the $\ell^1$ space $\bigoplus_{i=1}^n E_i$ with the product action having cocycle $(\zeta_i)$.
	Then combining the distance bounds, for any $g\in H$,
	\begin{align*}
		\|\zeta(g)\|
		& \geq \sum_{i=1}^n \|\zeta_i(g)\|
		\\  & \geq \frac{1}{C_*}\sum_{i=1}^n \theta\left( \frac{1}{C_*} \sum_{Y\in \cA_i} \Tsh{L_*}{d_Y(x_i,gx_i)} -C_*\right) 
		\\ & \geq \frac{1}{C_*}\theta\left(
				\sum_{i=1}^n \frac{1}{C_*} \sum_{Y\in \cA_i} \Tsh{L_*}{d_Y(x_i,gx_i)} -C_*n\right) 
		\\ & \geq \frac{1}{C_*}\theta\left(  \frac{1}{C_*C} \|g\| -1-C_*n \right)
		\\ & \geq \frac{1}{C'} \theta(\|g\|)-C'
	\end{align*}
	by subadditivity, for some suitable $C'$.
\end{proof}

\begin{remark}
The lower bound in Theorem \ref{thm:proper-mcg} can be improved as follows. It is known that $d_\Gamma (\id,g)$ is in fact quasi-isometric to 
$$\sum_{i=1}^{n} \sum_{Y\in\cA_i} \Tsh{L_*}{d_{Y}(x_i,gx_i)}.
$$ 

Assume that $\cA_1, \dots , \cA_k$ correspond to axes of pseudo-Anosovs (which in this case are Dehn twists) on subsurfaces $U_1, \dots , U_k$ that are annuli, that is $\xi (U_i)=-1$, while all the other subsurfaces have $\xi (U_i)>-1$. Each $\cA_j$ satisfies the Assumptions \ref{assump:quasi-tree-complex}, but for $j>k$ the extra assumption ($\branch $) is also satisfied. Therefore, one can find a proper affine uniformly Lipschitz action on $\ell^1$ such that the lower bound of the cocycle is
$$
\theta\left(\frac{1}{C}
				\sum_{i=1}^k  \sum_{Y\in \cA_i} \Tsh{L_*}{d_Y(x_i,gx_i)} -C \right) + \frac{1}{C} \sum_{i=k+1}^n \sum_{Y\in \cA_i} \Tsh{L_*}{d_Y(x_i,gx_i)} -C.  
$$ 

Note that $\sum_{i=k+1}^n  \sum_{Y\in \cA_i} \Tsh{L_*}{d_Y(x_i,gx_i)}$ is quasi-isometric to the Weil--Petersson metric induced on $\mcgs$ \textit{via} its identification with an orbit in the Teichm\"uller space.   
\end{remark}

\end{document}